\documentclass[a4paper, 11pt]{amsart}

\usepackage{amsfonts,mathtools,amssymb,amsthm} 

\usepackage[a4paper,margin=2.5cm]{geometry}
\usepackage{float}
\usepackage[symbol]{footmisc}
\usepackage{enumitem}

\usepackage{microtype}
\usepackage[font=small]{caption}

\usepackage[backend=bibtex,
style=numeric,
autocite=inline,maxbibnames=100,sortcites,doi=false,isbn=false,url=false]{biblatex}
\renewbibmacro{in:}{}

\usepackage{hyperref}
\hypersetup{
    colorlinks=true,
    linkcolor=blue,
    citecolor=blue,
    urlcolor=blue
}
\usepackage{graphicx,cleveref}

\usepackage{tikz-cd,quiver}
\usetikzlibrary{decorations.pathreplacing}
\usetikzlibrary{arrows}
\pgfdeclarelayer{background}
\pgfsetlayers{background,main}
\usepackage{array,booktabs}

\numberwithin{equation}{section}

\newtheorem{theorem}{Theorem}[section]
\newtheorem{proposition}[theorem]{Proposition}

\newtheorem*{conjecture*}{Conjecture}
\newtheorem*{theorem*}{Theorem}
\newtheorem{corollary}[theorem]{Corollary}
\newtheorem{lemma}[theorem]{Lemma}

\newtheorem{namedthm}{Theorem}
\newenvironment{thm*}[1]{%
  \renewcommand\thenamedthm{#1}%
  \namedthm}{\endnamedthm}

\theoremstyle{definition}

\newtheorem{example}[theorem]{Example}

\newtheorem{remark}[theorem]{Remark}

\newcommand{\C}{\mathbb{C}}

\newcommand{\Q}{\mathbb{Q}}
\newcommand{\R}{\mathbb{R}}
\newcommand{\Z}{\mathbb{Z}}

\DeclareMathOperator{\im}{Im}
\DeclareMathOperator{\PSL}{PSL}
\DeclareMathOperator{\Hom}{Hom}

\DeclareMathOperator{\Ho}{H}
\DeclareMathOperator{\coker}{Coker}
\DeclareMathOperator{\Ext}{Ext}
\DeclareMathOperator{\vcd}{vcd}
\DeclareMathOperator{\cd}{cd}
\DeclareMathOperator{\pd}{pd}
\DeclareMathOperator{\lcm}{lcm}
\DeclareMathOperator{\ord}{ord}
\DeclareMathOperator{\Isom}{Isom}
\DeclareMathOperator{\Stab}{Stab}
\DeclareMathOperator{\Ann}{Ann}
\DeclareMathOperator{\Int}{int}

\newcommand{\HH}{\mathbb{H}}
\newcommand{\EE}{\mathbb{E}}
\newcommand{\ZG}{\Z G}
\newcommand{\ZT}{\Z T}
\newcommand{\g}{\gamma}
\newcommand{\om}{\omega}
\newcommand{\lMod}[1]{#1{\operatorname{-Mod}}}
\newcommand{\catname}[1]{{\normalfont\textbf{#1}}}
\newcommand{\FP}{\catname{FP}}
\newcommand{\FL}{\catname{FL}}
\newcommand{\VFL}{\catname{VFL}}
\newcommand{\ZGmod}{\lMod{\ZG}}
\newcommand{\ZTmod}{\lMod{\ZT}}
\newcommand{\ZAmod}{\lMod{\Z A}}
\newcommand{\od}{\overline{d}}
\newcommand{\Ngk}[2]{\mathbf{N}_{#1}^{#2}}
\newcommand{\Nf}{\Ngk{f}{\phantom{l}}}
\newcommand{\Ng}{\Ngk{g}{\phantom{n}}}
\newcommand{\Nw}{\Ngk{W}{\phantom{n}}}
\newcommand{\W}{W(r,s,f,g)}
\newcommand{\wf}{W_{f}}
\newcommand{\wg}{W_{g}}
\newcommand{\ab}{\mathrm{ab}}
\newcommand{\Tab}{T^{\ab}}
\newcommand{\Gab}{G^{\ab}}
\newcommand{\orb}{\mathrm{orb}}

\title{Resolutions of orbifold generalised triangle groups}

\author{Sean P. Carroll}
    \address{Department of Mathematics, Northeastern University, Boston, Massachusetts, United States of America}
    \email{carroll.sea@northeastern.edu}

\author{Thomas Csizmadia}
    \address{Department of Mathematics, University of Auckland, Auckland, New Zealand}
    \email{tcsi800@aucklanduni.ac.nz}

\author{Jeroen Schillewaert}
    \address{Department of Mathematics, University of Auckland, Auckland, New Zealand}
    \email{j.schillewaert@auckland.ac.nz}

\begin{document}
    \makeatletter
    \@namedef{subjclassname@2020}{\textup{2020} Mathematics Subject Classification}
    \makeatother
    \subjclass[2020]{Primary 20J05; Secondary 20F05, 20H10, 57K32, 57R18}
    \keywords{Generalised triangle group, orbifold, free resolution, eventually periodic resolution, group homology, Fox calculus, Kleinian group, $2$-bridge link}

    \begin{abstract} 
        Let $T=\langle f,g\mid f^{l}=g^{m}=W^{n}=1\rangle$ be a generalised triangle group in which $W$ is the Farey word of a rational slope $q=r/s$, and suppose that $T$ is the fundamental group of a hyperbolic $3$-orbifold whose singular locus has no spherical vertices. Let $G=\langle f,g\mid f^{l}=g^{m}=W^{n}\rangle$ be the associated central extension. We compute an explicit free resolution of the trivial module over $\mathbb{Z}G$ of minimal length, with differentials given in closed form by Fox derivatives. From this resolution we deduce an explicit free resolution over $\mathbb{Z}T$ which is $2$-periodic in degrees at least two, and we compute the integral homology and cohomology of both groups. Strebel treated the classical family $q=0/1$ in 1983. Since then only the family $q=1/2$ has been treated, by Ellis and Williams, and for that family our resolution over $\mathbb{Z}T$ has strictly smaller rank in every degree.
    \end{abstract} 

    \maketitle
    \vspace{-.25in}

  
    \section{Introduction}

    The history of triangle groups is, in miniature, the history of the idea that a group can be understood through the space on which it acts. Poincar\'e \cite{Poincare1882}, Klein and Fricke \cite{Klein1884,KleinFricke1897} transformed the elementary relations of a triangle into a theory of symmetries, tessellations and automorphic functions. In modern language the triangle becomes a two-dimensional orbifold, and its universal cover is the geometric object from which the group's structure can be read off. It is natural to investigate how this picture extends when the two-dimensional orbifold is replaced by its three-dimensional analogue: the quotient $\mathbb{H}^3/\Gamma$ by a Kleinian group $\Gamma$ generated by two elliptic elements, which again carries a singular locus governed by the orders of the generators and their product. In the parabolic case, Agol conjectured \cite{Agol2002} that every non-free Kleinian group generated by two non-commuting parabolic elements is either a hyperbolic $2$-bridge link group or a Heckoid group, and this was proved by Akiyoshi, Ohshika, Parker, Sakuma and Yoshida \cite{AOPSY}. The elliptic case is expected to exhibit the same dichotomy, with generalised triangle groups and elliptic Heckoid groups as its two classes. It is the former class, the one most directly descended from the triangle groups of Klein and Poincar\'e, that occupies us here.

    An \emph{orbifold generalised triangle group} is a group with a finite presentation of the form
    \[T=T_{W}(l,m,n) = \langle f,g \mid f^{l}=g^{m}= W^{n}=1\rangle,\]
    where $W$ is a \emph{Farey word}
    \begin{equation}\label{eq:Wrs}
        \W=\prod_{i=0}^{s-1}f^{a_{i}}g^{b_{i}},
        \quad
        a_{i}=(-1)^{\lfloor(2i+1)r/s\rfloor},
        \quad
        b_{i}=(-1)^{\lfloor(2i+2)r/s\rfloor},
    \end{equation}
    indexed by a rational slope $q\coloneqq r/s$ for coprime integers $r,s$ with $0\le r<s$. Jones and Reid proved \cite[Thm.~8]{jonesreid} that $T$ is the orbifold fundamental group of a compact orientable $3$-orbifold $Q_{0}$, possibly with boundary. This orbifold is obtained by truncating a space $Q_{r,s}(l,m,n)$ whose singular locus is a $2$-bridge knot or link together with its unknotting tunnel.
    
    If we drop the requirement that the three powers be trivial, we obtain the group
    \[G=G_{W}(l,m,n) = \langle f,g \mid f^{l}=g^{m}= W^{n}\rangle, \qquad c\coloneqq f^{l}=g^{m}=W^n\in G,\]
    in which $c$ is central and $1\to\langle c\rangle\to G\to T\to 1$ is a central extension. Let
    \begin{equation*}
        c_{1}=\begin{cases}
            \frac{1}{l}+\frac{1}{m}+\frac{1}{n}-1 & s \text{ odd},\\
            \frac{2}{l}+\frac{1}{n}-1 & s \text{ even},
        \end{cases}
        \quad
        c_{2}=\begin{cases}
            \frac{1}{l}+\frac{1}{m}+\frac{1}{n}-1 & s \text{ odd},\\
            \frac{2}{m}+\frac{1}{n}-1 & s \text{ even}
        \end{cases}
    \end{equation*} 
    be the invariants of Williams \cite[\S2]{willectri}. Despite the notation, these invariants are unrelated to the central element $c\in Z(G)$. As explained in \S\ref{subsec:turnovers}, $c_{i}$ is the orbifold Euler characteristic of the link of the $i$-th vertex of the singular locus, so $c_{i}\le0$ means that this link is Euclidean or hyperbolic. Our paper concerns orbifold generalised triangle groups and their central extensions when both vertex links are non-spherical, that is, when $\max(c_{1},c_{2})\le 0$.
      
    \begin{thm*}{A}\label{thm:summaryA}
        Suppose $\max(c_1,c_2)\leq 0$ and $s\geq 2$. Then
        \begin{enumerate}[label=\textup{(\alph*)}]
            \item $\Z\in\ZGmod$ has a free resolution of length three with differentials given in closed form by Fox derivatives \textup{(Theorem \ref{thm:Gres})}.
    
            \item $\Z\in\ZTmod$ has a free resolution which is $2$-periodic in degrees at least two, with stable rank 3 \textup{(Theorem \ref{thm:Tres})}.
        \end{enumerate}
    \end{thm*}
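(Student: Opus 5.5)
Our plan is to construct the resolution over $\ZG$ geometrically first and then obtain the one over $\ZT$ from the central extension $1\to\langle c\rangle\to G\to T\to 1$. For (a), start from the presentation $G=\langle f,g\mid f^{l}g^{-m},\ f^{l}W^{-n}\rangle$. It gives the beginning of a resolution in the standard way:
\[\ZG^{2}\xrightarrow{\ \partial_2\ }\ZG^{2}\xrightarrow{\ \partial_1\ }\ZG\to\Z\to0 .\]
Here $\partial_1=(f-1,\;g-1)^{T}$, and $\partial_2$ is the Jacobian of Fox derivatives of the two relators.

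The hypothesis $\max(c_1,c_2)\le0$ means the vertex links are non-spherical. By the Jones--Reid description, $T$ is then the orbifold fundamental group of $Q_0$, which is aspherical as an orbifold. Correspondingly, the Seifert-type extension $G$ should be torsion-free with $\cd G\le 3$. The intended model is the $3$-manifold (with boundary) obtained by unwinding the central direction: $G$ is the fundamental group of a circle-bundle-like $3$-complex over $Q_0$. So the relation module $\ker\partial_1/\ldots$ has exactly one identity among relations, the relation module has projective dimension one, and the resolution stops at length three:
\[0\to\ZG\xrightarrow{\partial_3}\ZG^{2}\to\ZG^{2}\to\ZG\to\Z\to0 .\]

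To make $\partial_3$ explicit, we write down the identity among relations that comes from centrality of $c$. Conjugating the relator $f^{l}g^{-m}$ by $W$, and $f^{l}W^{-n}$ by $g$ (or by $f$), and comparing, gives a closed expression. We expect $\partial_3=(1-W,\ \ast)$, with the second entry built from $f$, $g$ and Fox derivatives $\partial W/\partial f$, $\partial W/\partial g$ using the palindromic/antipalindromic symmetry of the Farey word. We then check $\partial_2\partial_3=0$ directly by Fox calculus.

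\textbf{Exactness, the main obstacle.} The hard step is showing exactness at degree two and injectivity of $\partial_3$. We would argue via asphericity: realise the partial resolution as the cellular chain complex of the universal cover of a $3$-dimensional $K(G,1)$. Such a complex can be built from the Jones--Reid orbifold structure, with spine of $Q_0$, when the vertex links are Euclidean or hyperbolic. Alternatively, we would use the Lyndon-type identity theorem for one-relator products of cyclic groups (Howie, Duncan--Howie), which computes the module of identities for these presentations exactly when the associated curvature conditions $c_i\le0$ hold.

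For (b), apply the standard construction for a central extension by an infinite cyclic group $\langle c\rangle$, which acts freely on the universal cover. We have $\ZT=\ZG/(c-1)$, and tensoring the $G$-resolution with $\ZT$ gives a complex whose homology is controlled by multiplication by $c-1$. Splicing in the usual Gysin/Wall periodicity construction gives a resolution of $\Z$ over $\ZT$. Beyond degree $3$ it repeats a $2$-periodic block, with ranks alternating into stable rank $3=1+2$ (combining $\ZG\otimes\ZT$ from degree $3$ and $\ZG^{2}$ from degree $2$). The new differentials are formed from the torsion relators $f^{l}$, $g^{m}$, $W^{n}$ via norm elements $1+x+\dots+x^{k-1}$. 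We would verify the periodic block is exact using the orders of $f$, $g$, $W$ in $T$, namely exactly $l$, $m$, $n$, together with the fact that the finite cyclic subgroups are the only torsion. This last step uses the orbifold structure again.
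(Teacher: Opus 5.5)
There is a genuine gap exactly where the hypothesis $\max(c_1,c_2)\le 0$ has to be used. You infer that $G$ ``should be'' torsion-free from the asphericity of $Q_0$, and in (b) you simply take $\langle c\rangle$ to be infinite cyclic. Asphericity cannot be the reason. When $k\le 1$ and $Q_0$ is hyperbolic the orbifold is just as aspherical; for $k=0$ it is even a closed hyperbolic orbifold with universal cover $\HH^{3}$. Yet in those cases $\langle c\rangle$ is finite and $G$ has torsion (Remark \ref{rem:badcases}). Every later step of your plan needs $c$ to have infinite order:
\begin{itemize}
\item cancelling $c-1$;
\item injectivity of $\partial_3$, which needs $W-1$ to be a non-zero-divisor;
\item the periodicity construction in (b), which needs $1-c$ to be a central non-zero-divisor.
\end{itemize}
The paper proves this using both boundary turnovers. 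A torsion-free finite-index $T_0$ gives a compact manifold $M_0$ whose boundary components fall into two orbits under $T/T_0$. Lefschetz duality together with $\Hom(\Tab,\Q)=0$ then gives $\Ho_{2}(T,\Q)\cong\Q$ (Theorem \ref{thm:H2Q}). When $\det J\neq 0$, the five-term sequence of $1\to\langle c\rangle\to G\to T\to 1$ embeds $\Ho_{2}(T,\Z)$ into $\langle c\rangle$. When $\det J=0$, the homomorphism $\Psi$ of Lemma \ref{lem:Psi} detects $c$ (Theorem \ref{thm:central}).

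The second gap is exactness in degree two. You correctly name it as the main obstacle, but neither of your routes resolves it.
\begin{itemize}
\item A $3$-dimensional $K(G,1)$ with the presentation complex as its $2$-skeleton is precisely what has to be proved, and you give no construction. The circle-bundle idea works only over a torsion-free $T_0$. There it shows that $G$ is of type \FP, which makes $\ker\partial_2$ projective but exhibits no generator.
\item Howie's picture theory concerns the presentation of $T$, not of $G$. Its hypothesis is a lower bound $n\ge 4$ on the exponent, not the conditions $c_i\le 0$. So it misses cases such as $(l,m,n)=(6,3,2)$ with $q=2/3$ (Remark \ref{rem:whyk2}, Example \ref{ex:detzero}).
\end{itemize}
You are missing two ideas.
\begin{itemize}
\item Lemma \ref{hyp:AS}: the identities among relations of $\langle f,g\mid f^{l},g^{m},W^{n}\rangle$ are generated by $(f-1,0,0)$, $(0,g-1,0)$ and $(0,0,W-1)$. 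The proof writes $Q_0$ as a handlebody $H$ plus three disjoint singular $2$-handles. This is possible because for $k=2$ the singular locus of $Q_0$ has no vertices. It then uses the exact sequence of the pair $(C,\widetilde H)$, with $C$ the contractible universal orbifold cover.
\item The lift to $G$. With the paper's relators $W^{n}f^{-l}$, $W^{n}g^{-m}$, start from $(u_1,u_2)\in\ker d_2$. Working modulo $c-1$ and using $\Ann^{L}_{\ZT}(f-1)=\ZT\Nf$ and $\Ann^{L}_{\ZT}(g-1)=\ZT\Ng$, one builds an identity for $T$ whose third coordinate is $\overline{u_1+u_2}$. Lemma \ref{hyp:AS} then gives $u_1+u_2\in\ZG(W-1)+(c-1)\ZG$. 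Substituting back and cancelling the central non-zero-divisor $c-1$ puts $(u_1,u_2)$ in the image of $d_3$.
\end{itemize}

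Incidentally, no symmetry of the Farey word is involved in $\partial_3$. The fundamental formula $W-1=\wf(f-1)+\wg(g-1)$ gives it directly. For your relators $(f^{l}g^{-m},\,f^{l}W^{-n})$ it is $(\wg(g-1),\,1-W)$, which becomes the paper's $d_3=(\wf(f-1),\,\wg(g-1))$ after changing relators. Once (a) and the infinite order of $c$ are established, your outline of (b) is essentially the paper's construction.
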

    For the group $G$ a classical theorem of Lyndon (Theorem \ref{Thm:preres}) provides the first two differentials, and we determine the remaining kernel explicitly. The resolution over $\ZT$ is then obtained by the construction of \cite{seanyc}. Both steps rest on properties of $G$ and $T$ that we first derive from the geometry of the orbifold, and the derivation of these properties occupies most of the paper.

    Once the length-three resolution over $\ZG$ is available, the integral homology and cohomology are easy to compute. We apply the augmentation map to the entries of the differentials and compute the Smith normal forms of the resulting integer matrices.
    
    \begin{thm*}{B}\label{thm:homologyG}
        Suppose $\max(c_1,c_2)\leq 0$ and $s\geq 2$. Then
        \[
          \Ho_{i}(G,\Z)\cong\begin{cases}
              \Z & i=0,\\
              \Gab & i=1,\\
              0 & i=2 \textup{ and } \det J\ne 0,\\
              \Z & i=2 \textup{ and } \det J=0,\\
              \Z & i=3,\\
              0 & i\ge 4,
          \end{cases}
          \qquad
          \Ho^{i}(G,\Z)\cong\begin{cases}
              \Z & i=0,\\
              0 & i=1 \textup{ and } \det J\ne 0,\\
              \Z & i=1 \textup{ and } \det J=0,\\
              \Gab & i=2 \textup{ and } \det J\ne 0,\\
              \Z\oplus\Z/\nu & i=2 \textup{ and } \det J=0,\\
              \Z & i=3,\\
              0 & i\ge 4.
          \end{cases}
        \]
    \end{thm*}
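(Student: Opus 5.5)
Our approach is to compute directly from the length-three free resolution of Theorem~\ref{thm:Gres}:
\[0\to \ZG \xrightarrow{\partial_3} \ZG^2 \xrightarrow{\partial_2} \ZG^2 \xrightarrow{\partial_1} \ZG \to \Z\to 0.\]
Here $\partial_1=(f-1,\,g-1)^T$, and $\partial_2$ is the Jacobian of Fox derivatives of the two relators $f^{l}g^{-m}$ and $f^{l}W^{-n}$ (Lyndon, Theorem~\ref{Thm:preres}). The kernel generator $\partial_3$ is the one determined explicitly in that theorem. We tensor with $\Z$ over $\ZG$, i.e.\ apply the augmentation to every entry. This gives the integer chain complex
\[0\to\Z\xrightarrow{\epsilon\partial_3}\Z^2\xrightarrow{J}\Z^2\xrightarrow{0}\Z\to0,\]
where $J=\epsilon(\partial_2)$ and $\epsilon\partial_1=0$. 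Because the resolution has length three, all groups in degree $\ge4$ vanish, and $\Ho_0=\Ho^0=\Z$ is immediate.

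\textbf{Entries of $J$.} The augmented Fox derivatives are exponent sums. Write $\sigma_f,\sigma_g$ for the exponent sums of $f$ and $g$ in $W$. Then
\[J=\begin{pmatrix} l & -m\\ l-n\sigma_f & -n\sigma_g\end{pmatrix},\]
and $\coker J\cong\Gab$, which gives $\Ho_1=\Gab$.

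\textbf{Vanishing of $\epsilon\partial_3$.} The key step is to show $\epsilon\partial_3=0$. I expect this to be the main obstacle, since it depends on the closed form of $\partial_3$ from Theorem~\ref{thm:Gres}. One route is to read it off the explicit entries, which should be of the form ``norm-type element times something'' whose augmentations cancel, as for $(1-c)$-like factors. A cleaner alternative is via $\Ho_3$. Under the hypotheses, $G$ is a central extension whose quotient is a hyperbolic orbifold group, so $G$ should be a $3$-dimensional duality (Poincaré duality) group, for instance as the fundamental group of a Seifert-type circle bundle over the orbifold. Then $\Ho_3(G,\Z)=\Z$ forces $\epsilon\partial_3=0$. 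If I cannot invoke duality, I would instead use $\partial_2\partial_3=0$ together with the fact that the composition with $\epsilon$ must land in $\ker J$. This leaves a rank problem when $\det J\neq0$, where $\ker J=0$ over $\Z$, so $\epsilon\partial_3=0$ automatically. When $\det J=0$, the explicit formula is needed.

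\textbf{Homology.} Granting $\epsilon\partial_3=0$, we get $\Ho_3=\Z$ and $\Ho_2=\ker J$. This kernel is $0$ if $\det J\ne0$ and $\Z$ if $\det J=0$, because $J$ is nonzero, as its first row $(l,-m)$ is nonzero.

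\textbf{Cohomology.} Dualise to get
\[\Z\xrightarrow{0}\Z^2\xrightarrow{J^T}\Z^2\xrightarrow{0}\Z.\]
This gives $\Ho^1=\ker J^T$, which is $0$ or $\Z$ according as $\det J\neq0$ or $\det J=0$. Next, $\Ho^2=\coker J^T$. Its Smith normal form agrees with that of $J$, so it is $\Gab$ (a finite group of order $|\det J|$) when $\det J\ne0$. When $\det J=0$ and $J$ has rank one, it is $\Z\oplus\Z/\nu$, where $\nu$ is the gcd of the entries of $J$, i.e.\ its first elementary divisor. Finally $\Ho^3=\coker 0=\Z$, and $\Ho^i=0$ for $i\ge4$.
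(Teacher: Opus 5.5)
\textbf{Gap: $\varepsilon\partial_{3}=0$ when $\det J=0$.}
Your strategy is the paper's. You augment the length-three resolution of Theorem~\ref{thm:Gres}, read off $\Ho_{1}=\coker J\cong\Gab$ and $\Ho_{2}=\ker J$, and dualise for cohomology, where the paper uses the universal coefficient theorem to the same effect. That Smith-normal-form bookkeeping is correct.

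The gap is the step you flag yourself, $\varepsilon\partial_{3}=0$. Your observation that $\varepsilon(\partial_{3})J=0$ settles it only when $\det J\neq0$. When $\det J=0$ it is not a formality. A non-zero $\varepsilon\partial_{3}$ would be a non-zero element of $\ker J\cong\Z$, giving $\Ho_{3}(G,\Z)=0$ and $\Ho_{2}(G,\Z)$ finite. So in that case this step carries the whole content of the statement.

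Your shortcut through Poincar\'e duality does not work, because $G$ is not a Poincar\'e duality group. For the finite-index subgroup $G_{0}$ of Theorem~\ref{thm:vcdG} one has $\Ho^{3}(G_{0},\ZG_{0})\cong\Ho^{2}(T_{0},\ZT_{0})\cong\Ho^{2}_{c}(C)$. By Lefschetz duality this is $\Ho_{1}(C,\partial C)$, which is free abelian of rank one less than the number of components of $\partial C$. There are infinitely many such components, so this group is not $\Z$. Being merely a duality group of dimension three would not suffice either: $F_{2}\times\Z^{2}$ is one, and it has $\Ho_{3}\cong\Z^{2}$.

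\textbf{How to close it.}
The missing input is the closed form in Theorem~\ref{thm:Gres}, namely $d_{3}=\bigl[\,\wf(f-1)\ \ \wg(g-1)\,\bigr]$. Each entry has a right factor $x-1$ with $\varepsilon(x-1)=0$; the entries are not of norm type, as you guessed. Hence $\varepsilon d_{3}=0$ in all cases at once, and this is exactly the paper's argument.

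Alternatively, when $\det J=0$, Theorem~\ref{thm:central}(b) gives $\Ho_{2}(G,\Z)\cong\Z$ independently of the resolution. That forces $\im(\varepsilon\partial_{3})=0$ inside $\ker J\cong\Z$.

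\textbf{Relator mismatch.}
Theorem~\ref{thm:Gres} uses the relators $R_{1}=W^{n}f^{-l}$ and $R_{2}=W^{n}g^{-m}$, not your $f^{l}g^{-m}$ and $f^{l}W^{-n}$. So its $d_{3}$ is not literally a kernel generator for your $\partial_{2}$. Write $D(w)=(D_{f}(w),D_{g}(w))$. In $\ZG$ your two rows are $D(R_{2})-D(R_{1})$ and $-D(R_{1})$. Hence $\partial_{2}=Pd_{2}$ with $P=\begin{bmatrix}-1&1\\-1&0\end{bmatrix}$, which is invertible over $\Z$. Your $\partial_{3}$ is then $d_{3}P^{-1}$, and your $J$ is $P$ times the paper's. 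This changes nothing: the augmentation of $\partial_{3}$ is still zero, and the row lattice, determinant and Smith form are unchanged. You should nonetheless either say so or simply use the paper's relators.
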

    Here $\nu$ is the integer defined in \eqref{eq:nu-g} and $J$ is the matrix \eqref{eq:J}. 

    For the orbifold generalised triangle groups themselves we obtain the following computation. This computation is the exact analogue of the classical result for the triangle groups, which form the case $W=fg$ (see e.g.\ \cite[Prop.~6.4.1]{ellis}).
    \begin{thm*}{C}\label{thm:homologyT}
        Suppose $\max(c_1,c_2)\leq 0$ and $s\geq 2$. Then
        \[
          \Ho_{i}(T,\Z)\cong\begin{cases}
              \Z & i=0,\\
              \Tab & i=1,\\
              \Z & i=2,\\
              \Z/l\oplus\Z/m\oplus\Z/n & i\ge 3, \ i\ \textup{odd},\\
              0 & i\ge 4,\ i\ \textup{even},
          \end{cases}
          \Ho^{i}(T,\Z)\cong\begin{cases}
              \Z & i=0,\\
              0 & i=1,\\
              \Z\oplus\Tab & i=2,\\
              0 & i\ge 3, \ i\ \textup{odd},\\
              \Z/l\oplus\Z/m\oplus\Z/n & i\ge 4,\ i\ \textup{even}.
          \end{cases}
        \]
    \end{thm*}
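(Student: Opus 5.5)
The plan is to read off the homology from the resolution of Theorem~A(b) over $\ZT$. That resolution is $2$-periodic in degrees at least two with stable rank $3$, so tensoring with $\Z$ (trivial module) reduces everything to integer matrices: the augmentations of the Fox-derivative entries. Write the resolution as
\[\cdots\to \ZT^{3}\xrightarrow{\partial_{4}}\ZT^{3}\xrightarrow{\partial_{3}}\ZT^{3}\xrightarrow{\partial_{2}}\ZT^{2}\xrightarrow{\partial_{1}}\ZT\to\Z.\]
The first two differentials come from Lyndon's theorem (Theorem~\ref{Thm:preres}) for the presentation $\langle f,g\mid f^{l},g^{m},W^{n}\rangle$. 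The periodic part comes from the construction of \cite{seanyc}, built from the $\ZG$ resolution of Theorem~\ref{thm:Gres}.

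\textbf{Low degrees.} $\Ho_0=\Z$ and $\Ho_1=\Tab$ are formal. For $\Ho_2$ and $\Ho^1,\Ho^2$ I will use the augmented matrices. $\epsilon(\partial_1)=0$. The matrix $\epsilon(\partial_2)$ has rows $(l,0)$, $(0,m)$ and $(n\epsilon(\partial W/\partial f),\,n\epsilon(\partial W/\partial g))$, where $\epsilon(\partial W/\partial f)=\sum a_i$ and $\epsilon(\partial W/\partial g)=\sum b_i$. This matrix has rank $2$, so its kernel on the $3$-dimensional relator side is $\Z$ and its cokernel is $\Tab$.

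I then need $\epsilon(\partial_3)$ to map onto exactly that kernel's complement: its image should be the saturated rank-$1$ relation lattice hitting $\ker\epsilon(\partial_2)$ in index one. Concretely, the identity relations $f\cdot f^l=f^l\cdot f$ and their analogues for $g$ and $W$ give rows of the form $(1-f,0,0)$ etc., which augment to $0$. The remaining relation coming from the geometry (the "orbifold fundamental class") augments to a vector that generates $\ker\epsilon(\partial_2)$ rationally. Whether it does so integrally is the main point to verify. I expect $\epsilon(\partial_3)=0$ in the $T$-resolution, because the degree-$3$ generators correspond to torsion relations $x\cdot x^{k}=x^{k}\cdot x$. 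Then $\Ho_2=\ker\epsilon(\partial_2)\cong\Z$. Dually, $\Ho^1=\Hom(\Tab,\Z)=0$, since $\Tab$ is finite because $\epsilon(\partial_2)$ has rank $2$. Finally $\Ho^2=\coker\epsilon(\partial_2)^{T}$ restricted to $\ker\epsilon(\partial_3)^T$, which is all of $\Z^3$. That gives $\Z^3/\im\epsilon(\partial_2)^T\cong\Z\oplus\Tab$, by the Smith normal form of a rank-$2$ $3\times 2$ matrix whose nonzero invariant factors match those of its transpose.

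\textbf{Periodic part.} In the $2$-periodic part the differentials alternate between the diagonal-type maps $\mathrm{diag}(1-f,1-g,1-W)$ and the norm maps $\mathrm{diag}(\Nf,\Ng,\Nw)$, possibly after a change of basis. These augment to $0$ and to $\mathrm{diag}(l,m,n)$ respectively. Hence for $i\ge3$ odd, $\Ho_i=\Z/l\oplus\Z/m\oplus\Z/n$ and even $\Ho_i=0$; cohomology is shifted by one, giving $\Z/l\oplus\Z/m\oplus\Z/n$ in even degrees $\ge4$ and $0$ in odd degrees.

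The main obstacle is confirming the exact form of $\partial_3$ and the periodic differentials from Theorem~\ref{thm:Tres}, in particular that the boundary degree-$3$ term augments to zero (equivalently that $\Ho_2(T)$ is torsion-free of rank one). I would cross-check this with the Lyndon–Hochschild–Serre spectral sequence for $1\to\langle c\rangle\to G\to T\to1$, using Theorem~B. Alternatively I would use that $T$ acts on $\HH^3$ with finite cyclic stabilisers only along edges, since there are no spherical vertices. This makes $\Ho_i(T)$ for $i\ge3$ the sum of the $\Ho_i$ of the three edge-stabiliser cyclic groups $\Z/l,\Z/m,\Z/n$.
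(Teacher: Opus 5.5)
Your plan is the paper's proof: augment the resolution of Theorem~\ref{thm:Tres} in the diagonalised form of Corollary~\ref{cor:diagonal}, so that $\varepsilon\delta_{1}=0$, $\varepsilon\delta_{2}=R_{T}$, $\varepsilon\delta_{3}=0$ and $\varepsilon\delta_{4}=\operatorname{diag}(l,m,n)$, then read off the Smith normal forms and apply the universal coefficient theorem. The step you call the main obstacle is immediate, because every entry of $\partial_{3}$ ($f-1$, $g-1$, $1-g$, $\wf(f-1)$, $\wg(g-1)$ and zeros) lies in the augmentation ideal; drop your aside about an extra ``orbifold fundamental class'' identity, since no such identity exists when $k=2$ by Lemma~\ref{hyp:AS}, and if its augmentation spanned $\ker\varepsilon\partial_{2}$ rationally it would contradict $\Ho_{2}(T,\Q)\cong\Q$ from Theorem~\ref{thm:H2Q}.
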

    The abelianisations $\Tab$ and $\Gab$ are computed in Propositions \ref{prop:Tab} and \ref{prop:Gab}.

    \medskip
   
    \textbf{Related work.} Orbifold generalised triangle groups form an important class of generalised triangle groups, which have a similar presentation but allow $W=W(f,g)\in\Z/l\ast\Z/m$ to be any cyclically reduced word of syllable length at least $2$, with $l,m,n\ge 2$ integers. Some authors also require $W$ not to be conjugate to a proper power \cite[\S1]{jonesreid}. Generalised triangle groups were introduced by Baumslag, Morgan and Shalen \cite{BaumslagMorganShalen}, and much of the subsequent work has been combinatorial, resting on Howie's analysis of one-relator products with a high-powered relator \cite{howiepictures,howiefourth}. The finite ones are classified completely \cite{LevaiRosenbergerSouvignier1995,HowieMetaftsisThomas1995}, and they are among the cases excluded in the proof of Lemma \ref{ass}. The family of Farey words \eqref{eq:Wrs}, by contrast, is geometric in origin. Edmonds, Ewing and Kulkarni \cite{EdmondsEwingKulkarni1982} showed that for a finitely generated Fuchsian group of the first kind, the minimal index of a torsion-free subgroup is bounded above by twice the least common multiple of the orders of the finite (cyclic) subgroups. Jones and Reid introduced the orbifold generalised triangle groups to construct Kleinian groups for which this minimal index exceeds the least common multiple of the orders of the finite subgroups by an arbitrarily large factor \cite[Thm.~1]{jonesreid}. In the process Jones and Reid answered in part a question of Hagelberg on which generalised triangle groups arise as orbifold fundamental groups \cite[Thm.~8]{jonesreid}. Williams then determined which of the resulting Kleinian groups are arithmetic \cite{willarith}. In the non-cocompact case this sits inside Maclachlan and Martin's determination of all non-cocompact arithmetic Kleinian groups with two elliptic generators \cite{maclachlanmartin}. There are twenty-one such groups and fifteen of them are orbifold generalised triangle groups.

    Resolutions for generalised triangle groups have a shorter history. Strebel \cite{streb} treated the classical word $W=fg$, the case $q=0/1$ of the present setting, giving explicit free resolutions for the binary polyhedral groups and, in the infinite cases, for the central extensions of the triangle groups. Later the first author computed eventually $2$-periodic free resolutions for the hyperbolic triangle groups themselves \cite[\S10.3]{seanyc}. Ellis and Williams \cite{ellistri} gave a general method for constructing free resolutions for groups which act suitably on a contractible space, applied it to the generalised triangle groups with $W(1,2,f,g)=fg^{-1}f^{-1}g$, and computed their integral homology. The method of \cite{ellistri} also recovers the resolutions for the triangle groups and for the infinite cyclic central extensions treated by Strebel. 

    \medskip
    
    \textbf{Outline and strategy of the paper.} 
    Section \ref{sec:setup} fixes notation. Section \ref{sec:orbifold} collects in one place everything we use about the space $Q_{r,s}(l,m,n)$ and the associated truncated orbifold $Q_{0}$. In particular, Theorem \ref{thm:geometry} records the geometric structure of $Q_{0}$. The condition $\max(c_{1},c_{2})\le0$ says that both vertices of the singular locus of $Q_{r,s}(l,m,n)$ have non-spherical links, so that both are truncated, and this is what makes the whole approach possible. Indeed, when $\max(c_{1},c_{2})>0$ and the truncated orbifold is hyperbolic, or when $T$ is finite, the central subgroup $\langle c\rangle$ is finite. As a result, $c-1$ is a zero-divisor in $\ZG$ and $G$ has torsion, and neither the resolution of Theorem \ref{thm:Gres} nor the periodic resolution of Theorem \ref{thm:Tres} is available (Remark \ref{rem:badcases}).
    
    Sections \ref{sec:vcdT} to \ref{sec:cdG} establish the properties of $T$ and $G$ on which the resolutions rest. In Section \ref{sec:vcdT} we compute the virtual cohomological dimension of $T$. The abelianisations of $G$ and $T$ are computed in Section \ref{sec:ab}, and the second rational homology of $T$ in Section \ref{sec:H2}. Section \ref{sec:central} combines these to show that $c$ has infinite order in $G$, and it also computes $\vcd(G)$. Section \ref{sec:cdG} locates the torsion of $T$ and shows that $G$ is torsion-free of type $\FP$ with $\cd(G)=3$. The resolutions of Theorem \ref{thm:summaryA} occupy Section \ref{sec:freeres}. The resolution over $\ZG$ uses from the earlier sections only Theorem \ref{thm:geometry}, the fact that $c\in G$ has infinite order (Theorem \ref{thm:central}) and the torsion of $T$ (Lemma \ref{lem:torsion-class}). The key step is Lemma \ref{hyp:AS}, which identifies the module of identities among relations. Rather than appeal to picture theory we read the answer off the universal orbifold cover of $Q_{0}$ and translate it into Fox calculus. The homology and cohomology computations of Theorems \ref{thm:homologyG} and \ref{thm:homologyT} are in Section \ref{sec:homology}, and two worked examples are given in \S\ref{sec:examples}.

    \medskip
    
    \textbf{AI usage.} A completed draft of the paper was obtained by the authors over the past two years without any use of AI. We used OpenAI GPT-6 Astra for proofreading which picked up minor inaccuracies.
    
    \section{Notation and conventions}\label{sec:setup}

    \subsection{The groups}\label{subsec:groups}
    Throughout, $l,m,n\ge 2$ and $r,s$ are coprime integers with $0\le r<s$. With $\W$ as in \eqref{eq:Wrs} we put
    \[
        T=\langle f,g\mid f^{l}=g^{m}=\W^n=1\rangle,
        \quad
        G=\langle f,g\mid f^{l}=g^{m}=\W^n\rangle,
        \quad
        c\coloneqq f^{l},
    \]
    so that $c\in Z(G)$ and $1\to\langle c\rangle\to G\to T\to 1$ is a central extension. Let $\sigma_{f},\sigma_{g}$ denote the exponent sums of $f,g$ in $\W$. A direct computation from \eqref{eq:Wrs} gives
    \[
        \sigma_{f}=\sigma_{g}=0 \ \text{ for } s \text{ even},
        \quad
        \sigma_{f},\sigma_{g}\in\{\pm 1\} \ \text{ for } s \text{ odd}.
    \]
    The two signs may differ. For instance $r=1$ and $s=3$ give $W(1,3,f,g)=fgf^{-1}g^{-1}f^{-1}g$ with $\sigma_{f}=-1$ and $\sigma_{g}=+1$. We also set
    \begin{equation}\label{eq:nu-g}
        \eta\coloneqq \gcd(l,m),
        \quad
        \nu\coloneqq \gcd(l,m,n\sigma_{f},n\sigma_{g}) =\begin{cases}
            \gcd(l,m)=\eta & s \text{ even},\\
            \gcd(l,m,n) & s \text{ odd}.
    \end{cases}
    \end{equation}
    By Propositions \ref{prop:Tab} and \ref{prop:Gab}, the integer $\nu$ is the first invariant factor of both $\Tab$ and $\Gab$.
    
    \subsection{Orbifolds}\label{subsec:orbifolds}
    We use the standard language of $3$-orbifolds, for which \cite{BoileauMaillotPorti2003, CHK} are convenient references. An orbifold is a space locally modelled on quotients $\widetilde U/K$ of open subsets of $\R^{d}$, or of a half-space where boundary is allowed, by finite groups acting smoothly and faithfully. Each point $x$ has a \emph{local group} $\Gamma_{x}$, well defined up to isomorphism, and the \emph{singular locus} is the set of $x$ with $\Gamma_{x}\ne 1$. Orbifold coverings, the universal orbifold covering and $\pi_1^{\orb}$ as its deck group are as in \cite[\S2.2]{BoileauMaillotPorti2003}. In particular, if $p\colon E\to B$ is an orbifold covering with $B$ connected and $E_{0}$ is a component of $E$, then $p|_{E_{0}}\colon E_{0}\to B$ is again an orbifold covering since its image is both open and closed. Further, if in addition $\pi_{1}^{\orb}(E_{0})=1$, for example if $E_{0}$ is a simply connected manifold, then $p|_{E_{0}}$ is the universal orbifold covering of $B$, with deck group $\pi_{1}^{\orb}(B)$ (cf.\ \cite[Thm.~2.6]{BoileauMaillotPorti2003}).

    If a group $\Gamma$ acts properly discontinuously and faithfully by diffeomorphisms on a manifold $M$, then $M/\Gamma$ is an orbifold whose local group at the image of a point $y$ is the stabiliser $\Gamma_{y}$, which is finite by properness. An orbifold of this form is \emph{good} \cite[\S2.2]{BoileauMaillotPorti2003}. If $M$ is in addition simply connected, then $M\to M/\Gamma$ is the universal orbifold covering, so that $\pi_1^{\orb}(M/\Gamma)=\Gamma$.
    
    A closed $2$-orbifold with underlying surface $S$ and cone points of orders $\alpha_{1},\dots,\alpha_t$ has \emph{orbifold Euler characteristic}
    \[
      \chi^{\orb}=\chi(S)-\sum_{j=1}^{t}\Bigl(1-\frac1{\alpha_{j}}\Bigr),
    \]
    and, provided it is not one of the bad orbifolds $S^{2}(\alpha)$ and $S^{2}(\alpha,\beta)$ with $\alpha\ne\beta$, it carries a spherical, Euclidean or hyperbolic structure according as $\chi^{\orb}$ is positive, zero or negative.

    \subsection{Group finiteness properties}\label{subsec:finiteness}
    For a group $A$ we write $\Z A$ for the integral group ring, $\varepsilon\colon\Z A\to\Z$ for the augmentation map, and we set $\cd(A)\coloneqq\pd_{\Z A}(\Z)$. The standard reference for what follows is \cite[Ch.~VIII]{brown}.
     
    We say that $A$ is \emph{of type $\FP$} if the trivial module $\Z\in\ZAmod$ admits a projective resolution that is finitely generated in all degrees and of finite length. Similarly, we say $A$ is \emph{of type $\FL$} if $\Z\in\ZAmod$ admits a \emph{free} resolution that is finitely generated in all degrees and of finite length. When $\cd(A)<\infty$ that length may be taken to be $\cd(A)$ (cf.\ \cite[Ex.~1, p.~203]{brown}). Since a group with torsion has infinite cohomological dimension \cite[Ch.~VIII, Cor.~2.5]{brown}, it follows that any group of type $\FP$ is necessarily torsion-free.
     
    A virtually torsion-free group $\Gamma$ is \emph{of type $\VFL$} if some torsion-free subgroup of finite index is of type $\FL$, and we set $\vcd(\Gamma)=\cd(\Gamma')$ for such a subgroup $\Gamma'$. This is independent of the choice of $\Gamma'$ by a theorem of Serre \cite[Prop.~5]{serredisc}.

    \section{The truncated orbifold}\label{sec:orbifold}
    This section assembles the relevant information about the space $Q\coloneqq Q_{r,s}(l,m,n)$ and its truncation $Q_{0}$. The singular graph of $Q$ and the turnovers arising as vertex links are described in \S\S\ref{subsec:graph}--\ref{subsec:turnovers}. In \S\ref{subsec:structures} the space is truncated at those vertices whose link is not spherical, and Theorem \ref{thm:geometry} records the position of the truncation $Q_{0}$ inside the associated hyperbolic quotient together with the structure of its universal orbifold covering. The peripheral subgroups carried by the boundary turnovers are identified in \S\ref{subsec:peripheral}.
    
    \subsection{The singular graph}\label{subsec:graph}
    The underlying space of $Q$ is $S^{3}$ and its singular locus is a $2$-bridge knot or link $L$ together with its unknotting tunnel $\tau$ \cite[\S1]{jonesreid}, \cite[\S1]{willectri}. The addition of the tunnel edge to $L$ produces a trivalent graph $L \cup \tau$ of one of the two combinatorial types in Figure \ref{fig:singular-graphs}.
    \begin{figure}[ht]
        \centering
        \begin{tikzpicture}[scale=1.15,
            vtx/.style={circle,fill=black,inner sep=1.6pt},
            lbl/.style={fill=white,inner sep=1.5pt,font=\small}]
    
            \begin{scope}[shift={(-3.1,0)}]
                \node[vtx,label=left:$v_{1}$] (v1) at (-1.3,0) {};
                \node[vtx,label=right:$v_{2}$] (v2) at (1.3,0) {};
                \draw[thick] (v1) to[out=62,in=118] node[lbl,pos=0.5]{$l$} (v2);
                \draw[thick] (v1) to[out=0,in=180]  node[lbl,pos=0.5]{$m$} (v2);
                \draw[thick] (v1) to[out=-62,in=-118] node[lbl,pos=0.5]{$n$} (v2);
                \node[font=\footnotesize] at (0,-1.55) {$s$ odd};
            \end{scope}

            \begin{scope}[shift={(2.9,0)}]
                \node[vtx] (w1) at (-0.8,0) {};
                \node[vtx] (w2) at (0.8,0) {};
                \node[font=\small] at (-0.60,-0.3) {$v_{1}$};
                \node[font=\small] at (0.60,-0.3) {$v_{2}$};
                \draw[thick] (-1.3,0) circle (0.5);
                \node[lbl] at (-2,0) {$l$};
                \draw[thick] (1.3,0) circle (0.5);
                \node[lbl] at (2,0) {$m$};
                \draw[thick] (w1) -- node[lbl]{$n$} (w2);
                \node[font=\footnotesize] at (0,-1.5) {$s$ even};
              \end{scope}
        \end{tikzpicture}
        \caption{The singular graph $L \cup \tau$. The tunnel $\tau$ is the $n$-labelled edge.}
        \label{fig:singular-graphs}
    \end{figure}
    
    In the construction of Jones and Reid \cite[proof of Thm.~8]{jonesreid}, singular $2$-handles are attached to the boundary of a genus-two handlebody $H$ along disjoint simple closed curves representing $f$, $g$ and $\W$ (see \S\ref{subsec:pictures}). Then the link $L$ is a two-component link when the curve representing $\W$ separates $\partial H$, and a knot otherwise. In the pillowcase model of the four-times punctured sphere \cite[Fig.~7]{jonesreid}, this curve has slope $q=r/s$, and the corners of the two vertical edges come from the $f$-curve and the $g$-curve respectively. Since the endpoints of an arc of slope $q$ differ by $(s,r)$ modulo $2$, the curve separates the $f$-corners from the $g$-corners, and hence separates $\partial H$, precisely when $s$ is even. The parity of $s$ therefore decides between the two cases:
    \begin{itemize}
        \item \emph{$s$ odd ($L$ a knot).} The two tunnel endpoints cut the knot into two arcs $K_{1}, K_{2}$ and, together with $\tau$, these are the three edges of the singular graph. Combinatorially $L \cup \tau$ is a graph with two vertices joined by three edges with labels $l, m$ and $n$ (with $K_{1}, K_{2}$ carrying $l,m$ and $\tau$ carrying $n$).
        
        \item \emph{$s$ even ($L$ a link).} The tunnel joins the two components $L_{1}, L_{2}$, each of which gives a vertex of the singular graph and contributes a loop edge. Combinatorially $L \cup \tau$ is a graph with an $l$-labelled loop ($L_{1}$) at $v_{1}$ and an $m$-labelled loop ($L_{2}$) at $v_{2}$, joined by the tunnel edge $\tau$ of label $n$.
    \end{itemize}
    
    In both cases the three edge labels are $l$, $m$ and $n$, and the local groups along the edges are the cyclic groups $\Z/l$, $\Z/m$ and $\Z/n$, generated up to conjugacy by $f$, $g$ and $\W$ respectively.

    \subsection{Turnovers and the invariants \texorpdfstring{$c_{1},c_{2}$}{c1, c2}}\label{subsec:turnovers}
    The link of a trivalent vertex with incident labels $(\alpha,\beta,\gamma)$ is the \emph{turnover} $S^{2}(\alpha,\beta,\gamma)$, a $2$-sphere with three cone points of those orders, whose orbifold fundamental group is the orientation-preserving triangle group
    \[
      \Delta(\alpha,\beta,\gamma) =\langle x,y,z\mid x^\alpha=y^\beta=z^\gamma=xyz=1\rangle .
    \]
    This group is the local group at the vertex when it is finite, and only then, as explained in \S\ref{subsec:structures}.
    For a turnover the formula for $\chi^{\orb}$ in \S\ref{subsec:orbifolds} gives 
    \[
        \chi^{\orb}\bigl(S^{2}(\alpha,\beta,\gamma)\bigr)=-1+\frac{1}{\alpha}+\frac{1}{\beta}+\frac{1}{\gamma}, 
    \]
    so the turnover is the double of a spherical, Euclidean or hyperbolic triangle with angles $\pi/\alpha,\pi/\beta,\pi/\gamma$ according to the sign of $\chi^{\orb}$, as in Figure \ref{fig:turnover}. On the two vertex links of Figure \ref{fig:singular-graphs} this formula gives the Williams invariants \cite[\S2]{willectri}
    \begin{equation*}\label{eq:c1c2}
        c_{1}=\begin{cases}
            \frac{1}{l}+\frac{1}{m}+\frac{1}{n}-1 & s \text{ odd},\\
            \frac{2}{l}+\frac{1}{n}-1 & s \text{ even},
        \end{cases}
        \quad
        c_{2}=\begin{cases}
            \frac{1}{l}+\frac{1}{m}+\frac{1}{n}-1 & s \text{ odd},\\
            \frac{2}{m}+\frac{1}{n}-1 & s \text{ even},
        \end{cases}
    \end{equation*}
    and we define the \emph{end count}
    \begin{equation*}\label{eq:end-count}
      k\coloneqq\#\{i\in\{1,2\}:c_{i}\le 0\}\in\{0,1,2\} ,
    \end{equation*}
    so that $k=2$ if and only if $\max(c_{1},c_{2})\le0$. For $s$ odd we have $c_{1}=c_{2}$ and hence $k\in\{0,2\}$. The value $k=1$ therefore occurs only for even $s$. We write $F_{i}$ for the $i$-th vertex link, so that $\chi^{\orb}(F_{i})=c_{i}$.
    
    \begin{figure}[ht]
        \centering
        \begin{tikzpicture}[scale=1.15]
            \def\R{1.6}
            \begin{scope}[shift={(-3.9,0)}]
                \coordinate (O) at (0,0);
                \coordinate (a) at (90:\R); \coordinate (b) at (210:\R);
                \coordinate (c) at (330:\R);
                \path[fill=black!5, draw=black, line width=0.8pt]
                  (a) to[bend left=9] (b) to[bend left=9] (c) to[bend left=9] (a);
                \node[font=\small] at ($(a)!0.50!(O)$) {$\pi/\alpha$};
                \node[font=\small] at ($(b)!0.50!(O)$) {$\pi/\beta$};
                \node[font=\small] at ($(c)!0.50!(O)$) {$\pi/\gamma$};
            \end{scope}
            \draw[-{Stealth[length=2.8mm]}, line width=0.7pt] (-1.85,0) -- node[above=1pt, font=\small] {double} (-0.55,0);
            \begin{scope}[shift={(1.5,0)}]
                \coordinate (O) at (0,0);
                \coordinate (A) at (90:\R); \coordinate (B) at (210:\R);
                \coordinate (C) at (330:\R);
                \shade[inner color=black!1, outer color=black!20, draw=black,
                       line width=0.9pt]
                  (A) to[bend right=32] (B) to[bend right=32] (C) to[bend right=32] (A);
                \foreach \P in {A,B,C} \draw[black!40, line width=0.5pt]
                  (\P) -- ($(\P)!0.34!(O)$);
                \fill (A) circle (1.9pt) node[above=3pt] {$\alpha$};
                \fill (B) circle (1.9pt) node[below left=-1pt] {$\beta$};
                \fill (C) circle (1.9pt) node[below right=-1pt] {$\gamma$};
            \end{scope}
        \end{tikzpicture}
        \caption{The turnover $S^{2}(\alpha,\beta,\gamma)$ as the double of a triangle with angles $\pi/\alpha,\pi/\beta,\pi/\gamma$ (cf.\ \cite[Ex.~3.2]{CHK}).}
        \label{fig:turnover}
    \end{figure}

    The possible Euclidean turnovers can be listed explicitly.
    
    \begin{lemma}\label{lem:euclidean-links}
        Suppose $c_{i}=0$. Then the associated turnover $F_{i}$ is one of $S^{2}(3,3,3)$, $S^{2}(2,4,4)$ and $S^{2}(2,3,6)$, and $\pi_{1}^{\orb}(F_{i})$ is the crystallographic group $p3$, $p4$ or $p6$ respectively. In particular, if $s$ is odd then $c_{1}=0=c_{2}$ and $(l,m,n)$ is a permutation of $(3,3,3)$, $(2,4,4)$ or $(2,3,6)$. When $s$ is even, $(l,n)\in\{(4,2),(3,3)\}$ if $c_{1}=0$ and $(m,n)\in\{(4,2),(3,3)\}$ if $c_{2}=0$.
    \end{lemma}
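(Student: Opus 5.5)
The plan is to solve the Diophantine equation $\frac1\alpha+\frac1\beta+\frac1\gamma=1$ over integers $\alpha\le\beta\le\gamma$ with $\alpha,\beta,\gamma\ge2$, and then specialise to the two combinatorial types of vertex link from \S\ref{subsec:turnovers}.

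\emph{Classification of Euclidean turnovers.} Since $c_i=\chi^{\orb}(F_i)=-1+\frac1\alpha+\frac1\beta+\frac1\gamma$, the hypothesis $c_i=0$ means the equation above holds. With $\alpha\le\beta\le\gamma$, the inequality $1\le 3/\alpha$ forces $\alpha\in\{2,3\}$.
\begin{itemize}
\item If $\alpha=3$, then $\frac1\beta+\frac1\gamma=\frac23$ with $\beta\ge3$, which forces $\beta=\gamma=3$.
\item If $\alpha=2$, then $\frac1\beta+\frac1\gamma=\frac12$ gives $(\beta-2)(\gamma-2)=4$. With $\beta\le\gamma$ this yields $(\beta,\gamma)\in\{(4,4),(3,6)\}$.
\end{itemize}
So $F_i$ is one of $S^2(3,3,3)$, $S^2(2,4,4)$ or $S^2(2,3,6)$. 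In each case $\Delta(\alpha,\beta,\gamma)$ is the rotation subgroup of the Euclidean $(\alpha,\beta,\gamma)$ triangle reflection group. By the standard identification (cf.\ \cite[Ex.~3.2]{CHK}), this is the orientation-preserving wallpaper group $p3$, $p4$ or $p6$ respectively, distinguished by the maximal rotation order $3$, $4$ or $6$.

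\emph{Specialisation.} The remaining claims follow from reading the incident labels off Figure \ref{fig:singular-graphs}.
\begin{itemize}
\item For $s$ odd, both vertices have incident labels $(l,m,n)$, so $c_1=c_2$. Hence $c_i=0$ forces both to vanish, and $(l,m,n)$ is a permutation of one of the three triples.
\item For $s$ even, $v_1$ has labels $(l,l,n)$, since the loop contributes two ends. So $c_1=0$ means $\{l,l,n\}$ is one of the three multisets as a multiset.
\end{itemize}
In the even case, only $\{3,3,3\}$ and $\{2,4,4\}$ contain a repeated entry. A repeated entry must equal $l$, giving $(l,n)=(3,3)$ or $(l,n)=(4,2)$. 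The case $\{2,3,6\}$ has no repeated entry and is therefore excluded. The argument for $c_2$ is symmetric, with $m$ in place of $l$.

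There is no substantial obstacle here. The only point needing care is the identification of the triangle groups with $p3$, $p4$, $p6$, and I would simply cite it as standard.
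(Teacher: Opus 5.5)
Your proposal is correct and follows the same route as the paper: reduce $c_i=0$ to $\frac1\alpha+\frac1\beta+\frac1\gamma=1$, list the three solutions (you derive them explicitly where the paper only asserts them), cite the identification with $p3$, $p4$, $p6$, and then specialise to the label triples $(l,m,n)$ and $(l,l,n)$ or $(m,m,n)$. The only difference is cosmetic: in the even case you match the multiset $\{l,l,n\}$ against the three solutions, whereas the paper solves $\frac2l+\frac1n=1$ directly as $l=2n/(n-1)$, which forces $n-1\mid 2$.
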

    
    \begin{proof}
        The condition $c_{i}=0$ reads $1/\alpha+1/\beta+1/\gamma=1$ with integers $\alpha,\beta,\gamma\ge 2$. The only solutions are the three listed triples, and the associated triangle groups are the crystallographic groups $p3$, $p4$ and $p6$ \cite[\S7.5]{Ratcliffe2006}. For $s$ odd the link is $S^{2}(l,m,n)$. 
        
        Now suppose $s$ is even and $c_{1}=0$. Then the link is $S^{2}(l,l,n)$, so $2/l+1/n=1$ and $l=2n/(n-1)$, which is an integer at least $2$ only for $(l,n)=(4,2)$ and $(l,n)=(3,3)$. The same applies to $S^{2}(m,m,n)$ and $(m,n)$ if $c_{2}=0$.
    \end{proof}

    \subsection{Truncation and the hyperbolic structure}\label{subsec:structures}
    The cone on a closed $2$-orbifold $F$ carries the structure of an orientable orbifold only if $F\cong S^{2}/K$ for a finite $K\le \operatorname{SO}(3)$, that is, only if $F$ is spherical. Since $\chi^{\orb}(F_{i})=c_{i}$, the vertex $v_{i}$ is a cone point of $Q$ precisely when $c_{i}>0$. Every other vertex admits no orbifold chart, and Jones and Reid delete an open neighbourhood of it \cite[\S1]{jonesreid}  (see Figure \ref{fig:truncation}). We take $B(v_{i})$ to be an open ball around $v_{i}$ meeting $L\cup\tau$ in three radii, so that its labelled boundary sphere is the link $F_{i}$. Following \cite[proof of Thm.~2.1]{willectri} we write
    \[
        Q_0\coloneqq Q\setminus\bigsqcup_{\substack{i\\ c_{i}\le 0}} B(v_{i}),\quad \partial Q_{0} = \bigsqcup_{\substack{i\\ c_{i}\le 0}} \partial(B(v_{i})),
    \]
    for the resulting \emph{truncated} orbifold\footnote{This orbifold is denoted $Q$ in \cite[Thm.~9]{jonesreid}.}. It is compact and orientable with $\pi_{1}^{\orb}(Q_{0})\cong T$, and its boundary is the disjoint union of the $k$ turnovers $F_{i}$ with $c_{i}\le 0$, which are Euclidean for $c_{i}=0$ and hyperbolic for $c_{i}<0$. Its singular locus $(L\cup\tau)\cap Q_{0}$ has $2-k$ vertices and meets each boundary turnover transversely in that turnover's three cone points, so the local groups along the edges are those of \S\ref{subsec:graph}.
    
    \begin{figure}[ht]
        \centering
        \begin{tikzpicture}[scale=0.98, 
            vtx/.style={circle,fill=black,inner sep=1.7pt}, 
            lbl/.style={fill=white,inner sep=1.4pt,font=\small}]
            \begin{scope}[shift={(-3.6,0)}]
                \node[vtx] (c) at (0,0) {};
                \draw[thick] (c) -- ++(90:1.45) node[lbl,pos=0.62]{$\alpha$};
                \draw[thick] (c) -- ++(210:1.45) node[lbl,pos=0.62]{$\beta$};
                \draw[thick] (c) -- ++(330:1.45) node[lbl,pos=0.62]{$\gamma$};
                \draw[blue!55!black,thick,dash pattern=on 2pt off 1.5pt] (0,0) circle (0.62);
                \node[blue!55!black,font=\scriptsize] at (0.50,0.85) {link};
            \end{scope}
            \node[font=\footnotesize,align=center] at (-3.6,-1.5) {$\dfrac{1}{\alpha}+\dfrac{1}{\beta}+\dfrac{1}{\gamma}>1$};
            
            \begin{scope}[shift={(3.6,0)}]
                \fill[black!7] (0,0) circle (0.62);
                \draw[thick,red!60!black,dash pattern=on 2pt off 1.5pt] (0,0) circle (0.62);
                \draw[thick] (90:0.62) -- ++(90:0.83) node[lbl,pos=0.6]{$\alpha$};
                \draw[thick] (210:0.62) -- ++(210:0.83) node[lbl,pos=0.6]{$\beta$};
                \draw[thick] (330:0.62) -- ++(330:0.83) node[lbl,pos=0.6]{$\gamma$};
                \node[font=\normalsize] at (0,0) {\textcolor{red!60!black}{$\varnothing$}};
                \node[red!60!black,font=\scriptsize] at (0.85,0.85) {boundary};
            \end{scope}
            \node[font=\footnotesize,align=center] at (3.6,-1.5) {$\dfrac{1}{\alpha}+\dfrac{1}{\beta}+\dfrac{1}{\gamma}\le 1$};
        \end{tikzpicture}
        \caption{Left: a spherical vertex link bounds an orbifold chart, so the vertex is a cone point. Right: a Euclidean or hyperbolic vertex link admits no such chart, so an open neighbourhood of the vertex is deleted and the turnover becomes a boundary component.}
        \label{fig:truncation}
    \end{figure}

    If we delete the vertices themselves, rather than neighbourhoods of them, we obtain the orbifold without boundary
    \[
        Q^{\times}\coloneqq Q\setminus\{v_{i}\ :\ c_{i}\le 0\},
        \qquad
        U_{i}\coloneqq\overline{B(v_{i})}\setminus\{v_{i}\},
        \qquad
        Q^{\times}=Q_{0}\cup\bigsqcup_{\substack{i\\ c_{i}\le 0}}U_{i},
    \]
    in which $Q_{0}$ and the $U_{i}$ meet along the boundary $\partial Q_{0}$. Each $U_{i}$ is a cone on $F_{i}$ with its apex deleted, so
    \[
        U_{i}\cong F_{i}\times[0,\infty)
    \]
    by a diffeomorphism which maps $F_{i}$ to $F_{i}\times\{0\}$. Thus $Q^{\times}\cong\Int(Q_{0})$, and in the language of \cite[\S3]{GarciaPorti} the orbifold $Q^{\times}$ has one end $U_{i}$ with turnover cross-section $F_{i}$ for each $i$ with $c_{i}\le 0$.
    
    Recall that the universal orbifold cover $\widetilde{F_{i}}$ of $F_{i}$ is either $\EE^{2}$ or $\HH^{2}$ depending on whether $F_{i}$ is Euclidean or hyperbolic (\S\ref{subsec:turnovers} and \cite[\S7.2]{Ratcliffe2006}). Then the group $\pi_{1}^{\orb}(F_{i})$ acts on $\widetilde{F_{i}}\times[0,\infty)$ through the first factor, and this action is properly discontinuous and faithful with quotient $U_{i}$. By \S\ref{subsec:orbifolds} the inclusion $F_{i}\subseteq U_{i}$ therefore induces an isomorphism $\pi_{1}^{\orb}(F_{i})\cong\pi_{1}^{\orb}(U_{i})$. Then by applying the orbifold Seifert--van Kampen theorem \cite[Cor.~2.3]{BoileauMaillotPorti2003} once for each end, it follows that the inclusion $Q_{0}\subseteq Q^{\times}$ also induces an isomorphism on orbifold fundamental groups.
    
    \begin{lemma}\label{ass}
        If $k=2$ and $s\ge 2$, then $Q_{0}$ is hyperbolic, that is, the orbifold $Q^{\times}\cong\Int(Q_{0})$ carries a complete hyperbolic structure.
    \end{lemma}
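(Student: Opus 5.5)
We prove Lemma \ref{ass} by appealing to the orbifold hyperbolisation theorem for orbifolds with turnover ends, in the form used by Jones--Reid and Williams: a compact orientable irreducible, atoroidal $3$-orbifold whose boundary consists of Euclidean and hyperbolic turnovers has an interior with a complete hyperbolic structure of finite volume away from the hyperbolic turnover ends. The intended source is \cite[Thm.~9]{jonesreid} and \cite[Thm.~2.1]{willectri}, where $Q_{0}$ is shown to be hyperbolic outside an explicit finite list of exceptions. So the plan has two parts. First we quote that result. Second we check that none of the exceptional parameters satisfy $k=2$ and $s\ge2$. Throughout we use the structure of $Q^{\times}$ with ends $U_{i}\cong F_{i}\times[0,\infty)$ established above.

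\textbf{Quoting the theorem.} Williams \cite[\S2]{willectri} shows that when no vertex of the singular graph is spherical, $Q_{0}$ is a Haken-type orbifold. Its boundary is incompressible, because turnovers with $\chi^{\orb}\le0$ are $\pi_{1}$-injective; compare $\pi_{1}^{\orb}(F_{i})\cong\pi_{1}^{\orb}(U_{i})$ above. Thurston's orbifold theorem \cite{BoileauMaillotPorti2003} then gives a hyperbolic structure on the interior, unless $Q_{0}$ contains an essential Euclidean suborbifold or is Seifert fibred. The Euclidean boundary turnovers become cusps, and the hyperbolic turnovers become totally geodesic boundary, or funnels in the complete structure on $Q^{\times}$.

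\textbf{Ruling out the exceptions.} This is the main obstacle: we must exclude essential spheres, discal and toroidal suborbifolds, and Seifert fibrations.
\begin{itemize}
\item For $s$ even, with $s\ge2$, the link $L$ is a hyperbolic or torus $2$-bridge link. The case $s=2$ is the $(2,4)$ torus link with $W=fg^{-1}f^{-1}g$, and Ellis--Williams treat this case as hyperbolic when both links are non-spherical.
\item For $s$ odd we need $s\ge3$, so $L$ is a nontrivial $2$-bridge knot.
\end{itemize}
Irreducibility follows because $2$-bridge links are prime and the tunnel is unknotting. Excluding Seifert fibrations uses that $T$ is infinite and contains the non-elementary turnover group $\Delta$ for $c_{i}<0$, or two $\Z^{2}$-by-finite peripheral subgroups for $c_{i}=0$. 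The finite generalised triangle groups \cite{LevaiRosenbergerSouvignier1995,HowieMetaftsisThomas1995} occur only when some $c_{i}>0$, so they are excluded here.

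If I cannot cite a clean statement for the atoroidal part, I would fall back on Williams' explicit list of parameters for which $Q_{0}$ is non-hyperbolic. I would then check by inspection that every entry on that list has $\max(c_{1},c_{2})>0$ or $s=1$.
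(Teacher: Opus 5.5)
Your top-level plan, quoting \cite[Thm.~9]{jonesreid} and checking that its non-hyperbolic cases are incompatible with $k=2$ and $s\ge 2$, is the paper's. However, the step that carries all the content, the exclusion of the exceptions, is not actually done. Theorem~9 of Jones--Reid already settles irreducibility, atoroidality and Seifert fibrations, and it reduces everything to four explicit exceptional cases. Instead of checking those cases, your ``ruling out'' paragraph tries to re-verify the hypotheses of Thurston's orbifold theorem, and those arguments fail as written:
\begin{itemize}
\item Primeness of $L$ and the tunnel being unknotting do not give irreducibility of the \emph{orbifold}. You would also have to handle spherical and bad $2$-suborbifolds meeting $\tau$ and the vertices.
\item Atoroidality is not addressed at all.
\item Containing a Fuchsian triangle group or a $\Z^{2}$-by-finite subgroup is no obstruction to being Seifert fibred, since orbifolds with $\HH^{2}\times\R$ geometry contain Fuchsian subgroups. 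A correct direct reason is that a Seifert fibred orbifold has boundary fibred by circles, which no turnover admits.
\item Your incompressibility reasoning runs backwards, and $\pi_{1}^{\orb}(F_{i})\cong\pi_{1}^{\orb}(U_{i})$ only concerns the collar. The right reason is that non-spherical turnovers contain no essential curves.
\item For $s=2$ one has $\pi_{1}(S^{3}\setminus L)\cong\langle f,g\mid fg^{-1}f^{-1}g\rangle\cong\Z^{2}$, so $L$ is the Hopf link, not the $(2,4)$ torus link.
\item Appealing to Ellis--Williams is not a proof of hyperbolicity.
\end{itemize}

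Your fallback is the actual proof, and it is short once written out. The four exceptional cases are handled as follows.
\begin{enumerate}
\item The bad case, with $L$ the two-component unlink. This is slope $1/0$, and it is excluded by the convention $0\le r<s$, not by ``$\max(c_{1},c_{2})>0$ or $s=1$'' as you predicted.
\item $T$ finite. Every group in the classification of finite generalised triangle groups has $c_{1}>0$ or $c_{2}>0$.
\item The Seifert fibred case: $s$ odd, $n=2$, $\min(l,m)=2$ and $2r\equiv\pm1\pmod s$. Here $c_{1}=c_{2}=1/\max(l,m)>0$, so $k=0$.
\item The $I$-fibred case $r=0$, $s=1$, which is excluded by $s\ge 2$.
\end{enumerate}
Finally, for a compact orbifold with boundary, ``hyperbolic'' means that its interior carries a complete hyperbolic structure, and that interior is $Q^{\times}$.
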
   
    \begin{proof}
    By \cite[Thm.~9]{jonesreid} the orbifold $Q_{0}$ is hyperbolic (which is case (5) there) unless it falls into one of the four cases (1)--(4) listed in that theorem. We check that our hypotheses exclude each of them.
    \begin{enumerate}[label=\textup{(\arabic*)}]
    \item The bad case, in which $L$ is the two-component unlink. By \cite[p.~244]{jonesreid} the link $L$ is the rational link of slope $r/s$, whereas the two-component unlink is the rational link of slope $1/0$. The convention $0\le r<s$ excludes the latter.
    \item The spherical case, in which $T=\pi_{1}^{\orb}(Q_{0})$ is finite. An inspection of the classification of finite generalised triangle groups listed in \cite[Thm.~7.3.4.1]{algdiscgrp} shows that each of them has $c_{1}>0$ or $c_{2}>0$, and hence $k<2$.
    \item The Seifert-fibred case, in which $s$ is odd, $n=2$, $\min(l,m)=2$ and $2r\equiv\pm1\pmod s$. For odd $s$ the conditions $n=2$ and $\min(l,m)=2$ give $c_{1}=c_{2}=1/\max(l,m)>0$, and hence $k=0$.
    \item The $I$-fibred case, in which $r=0$ and $s=1$. This is excluded by $s\ge 2$.
    \end{enumerate}
    For a compact orbifold with boundary, hyperbolicity means that the interior carries a complete hyperbolic structure \cite[\S1.1]{BoileauMaillotPorti2003}, and the interior of $Q_{0}$ is $Q^{\times}$, as observed above.
    \end{proof}

    Jones and Reid deduce \cite[Thm.~9]{jonesreid} from Thurston's orbifold theorem \cite[Thms.~3.27, 9.1]{BoileauMaillotPorti2003}. We fix a complete hyperbolic structure on $Q^{\times}$ as in Lemma \ref{ass}, in which the end $U_{i}$ is a cusp when $c_{i}=0$ and a funnel of infinite volume when $c_{i}<0$ \cite[\S6.2.2]{BoileauMaillotPorti2003}, \cite[\S3]{GarciaPorti}.
    
    Since $Q^{\times}$ is orientable, the holonomy of this structure takes values in $\Isom^{+}(\HH^{3})=\PSL(2,\C)$. This gives a faithful representation $\rho_{0}\colon T\cong\pi_{1}^{\orb}(Q^{\times})\to\PSL(2,\C)$ with discrete image that acts properly discontinuously on $\HH^{3}$, and $Q^{\times}$ is isometric to the quotient \cite[Thm.~2.26]{CHK}. We set
    \[
    \Gamma\coloneqq \rho_{0}(T)\le \PSL(2,\C), \quad \pi\colon\HH^{3}\to \HH^{3}/\Gamma=Q^{\times}, \quad C\coloneqq \pi^{-1}(Q_{0}).
    \]
    Then $\Gamma$ acts faithfully on $\HH^{3}$, so the orbifold $Q^{\times}=\HH^{3}/\Gamma$ is good and $\pi$ is its universal orbifold covering, with deck group $\Gamma\cong T$ (\S\ref{subsec:orbifolds}). In particular the local groups of $Q^{\times}$ are the point stabilisers of the action, up to conjugacy.
    
    The next theorem describes how $Q_{0}$ sits inside $\HH^{3}/\Gamma$, together with the structure of its preimage $C$.
    \begin{theorem}\label{thm:geometry}
        Let $k=2$ and $s\geq 2$, and let $\Gamma$, $\pi$ and $C$ be as above. Then
        \begin{enumerate}[label=\textup{(\alph*)}]
            \item\label{geo:prod} $(\HH^{3}/\Gamma)\setminus Q_0$ is the disjoint union of the ends $F\times(0,\infty)$, one for each component $F$ of $\partial Q_{0}$, with the local group of $\HH^{3}/\Gamma$ at $(a,t)$ isomorphic to that of $F$ at $a$.
            \item\label{geo:retract} $C$ is $\Gamma$-invariant and contractible, and $\pi|_{C}\colon C\to C/\Gamma=Q_{0}$ is the universal orbifold covering.
        \end{enumerate}
    \end{theorem}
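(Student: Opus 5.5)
For part (a), I would begin from the decomposition $Q^{\times}=Q_{0}\cup\bigsqcup_{i}U_{i}$ set up above. We already have diffeomorphisms $U_{i}\cong F_{i}\times[0,\infty)$ that carry $F_{i}$ onto $F_{i}\times\{0\}$. Since $k=2$, both ends are truncated, so the complement is
\[(\HH^{3}/\Gamma)\setminus Q_{0}=Q^{\times}\setminus Q_{0}=\bigsqcup_{i}\bigl(U_{i}\setminus\partial B(v_{i})\bigr)\cong\bigsqcup_{i}F_{i}\times(0,\infty).\]
These pieces are disjoint because the balls $B(v_{i})$ were chosen disjoint. For the local groups, $U_{i}$ is the quotient of $\widetilde{F_{i}}\times[0,\infty)$ by $\pi_{1}^{\orb}(F_{i})$ acting through the first factor. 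The stabiliser of a point $(\tilde a,t)$ is therefore the stabiliser of $\tilde a$, which is the local group of $F_{i}$ at $a$. The orbifold structure on $Q^{\times}$ is intrinsic: the hyperbolic structure from Lemma \ref{ass} is a structure on the given orbifold, so it does not change local groups. Hence the local group of $\HH^{3}/\Gamma$ at $(a,t)$ is isomorphic to that of $F_{i}$ at $a$.

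For part (b), $\Gamma$-invariance of $C=\pi^{-1}(Q_{0})$ is immediate, since $C$ is a full preimage. Write $N\coloneqq\HH^{3}\setminus \Int C$ for the preimage of the closed ends $\bigsqcup_{i}U_{i}$. I would analyse $N$ one component at a time using the covering theory of \S\ref{subsec:orbifolds}. Restricting $\pi$ to a component $\widetilde U$ of $\pi^{-1}(U_{i})$ gives an orbifold covering of $U_{i}$. Its stabiliser $\Gamma_{\widetilde U}$ is the image of $\pi_{1}^{\orb}(U_{i})\cong\pi_{1}^{\orb}(F_{i})$, and this map is injective because $F_{i}$ is incompressible. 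Incompressibility of turnovers in a hyperbolic orbifold holds because the Euclidean and hyperbolic turnover groups $\pi_{1}^{\orb}(F_{i})$ embed in $\Gamma$, for instance via the Seifert--van Kampen isomorphism and the peripheral structure of the ends. Consequently $\widetilde U\to U_{i}$ is the universal covering, so $\widetilde U\cong\widetilde{F_{i}}\times[0,\infty)$ and its boundary $\widetilde{F_{i}}\times\{0\}$ is a properly embedded plane.

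Contractibility of $C$ then follows by retracting each product component $\widetilde{F_{i}}\times[0,\infty)$ of $N$ onto its boundary plane. These retractions have disjoint supports, so they assemble into a deformation retraction $\HH^{3}\simeq C$, and since $\HH^{3}$ is contractible so is $C$. The step that needs care is continuity and properness of the assembled homotopy. To control this I would take the collars geometric: a horoball neighbourhood when $c_{i}=0$, and the region beyond a totally geodesic turnover, or a convex-core collar, when $c_{i}<0$. In that setting each component of $N$ is a horoball or a closed half-space, and $C$ is the complement of a locally finite disjoint family of such regions. Nearest-point retraction onto $C$ then gives the homotopy directly; convexity of $C$ makes it a $1$-Lipschitz map, so continuity is automatic.

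It remains to show that $\pi|_{C}$ is the universal orbifold covering. The restriction of $\pi$ to $C$ is an orbifold covering of $Q_{0}$ with connected total space. The space $C$ is connected, being a deformation retract of $\HH^{3}$, and it is contractible. Moreover $\Gamma$ acts on $C$ properly discontinuously and faithfully, faithfulness holding because $C$ contains an open set. By \S\ref{subsec:orbifolds} the map $C\to C/\Gamma=Q_{0}$ is therefore the universal orbifold covering, with deck group $\Gamma\cong T$. I expect the main obstacle to be justifying that the ends of $Q_{0}$ can be realised geometrically in this convex way, which needs the cusp and funnel description from \cite[\S6.2.2]{BoileauMaillotPorti2003}.
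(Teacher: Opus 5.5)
Part (a) is the paper's argument, and in part (b) your overall plan is also the paper's: collapse the preimages of the product ends onto $\pi^{-1}(\partial Q_{0})$, then apply \S\ref{subsec:orbifolds}. The gap is in the step you single out as delicate. Your claim that $C$ is convex, so that nearest-point retraction is $1$-Lipschitz, is false whenever some $c_{i}=0$. The components of $N$ over a cusp are closed horoballs, and the complement of an open horoball is not convex, because the geodesic between two distinct points of a horosphere enters the horoball. Both ends of Example~\ref{ex:detzero} are cusps, for instance. Nearest-point projection onto $C$ is still defined there: it is vertical projection onto the horosphere in the upper half-space model. But it multiplies horizontal lengths by $e^{t}$ at depth $t$, so the reason you give for continuity does not apply. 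Moreover, switching to horoball or totally geodesic collars replaces $Q_{0}$ by a different compact core, whereas the theorem is about $C=\pi^{-1}(Q_{0})$ for the $Q_{0}$ already fixed in \S\ref{subsec:structures}. You would therefore also need an isotopy between the two collar structures, and a reference for the totally geodesic boundary turnovers of the funnels.

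None of this geometry is needed. Continuity follows from the pasting lemma. $C$ and $N$ are closed in $\HH^{3}$ with $C\cap N=\pi^{-1}(\partial Q_{0})$, and the components of the manifold with boundary $N$ are open in $N$. So a homotopy defined component by component is continuous on $N\times[0,1]$, and since it fixes $\pi^{-1}(\partial Q_{0})$ pointwise it glues with the identity on $C\times[0,1]$. This is how the paper concludes.

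Your identification of each component of $N$ with $\widetilde{F_{i}}\times[0,\infty)$ also rests on incompressibility, and you justify that circularly: injectivity of $\pi_{1}^{\orb}(F_{i})\to\Gamma$ \emph{is} incompressibility. It can be cited via the equivariant loop theorem as in Proposition~\ref{prop:peripheral}, but the paper avoids it altogether. Write $P_{i}=\pi^{-1}(U_{i})$ and $\Sigma_{i}=\pi^{-1}(F_{i})$. Both $\pi|_{P_{i}}$ and $\pi|_{\Sigma_{i}}\times\mathrm{id}$ are possibly disconnected orbifold coverings of $U_{i}=F_{i}\times[0,\infty)$, and they have the same monodromy because every element of $\pi_{1}^{\orb}(U_{i})$ is represented by a loop in $F_{i}\times\{0\}$. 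Hence $P_{i}\cong\Sigma_{i}\times[0,\infty)$ over $U_{i}$, without knowing what the components of $\Sigma_{i}$ are. With that product structure and the pasting lemma, your first description of the retraction is already a complete proof.
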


    \begin{proof}
        Part \ref{geo:prod} follows from the description of $Q^{\times}$ and its ends given above. Indeed, $Q^{\times}\setminus Q_{0}$ is obtained by removing the level $t=0$ from each end $U_{i}\cong F_{i}\times[0,\infty)$, and the local groups of $F_{i}\times(0,\infty)$ are those of $F_{i}$.
        
        For part \ref{geo:retract} we construct an explicit deformation retraction of $\HH^{3}$ onto $C$. Put $\Sigma\coloneqq\pi^{-1}(\partial Q_{0})$ and $P\coloneqq\pi^{-1}\bigl(\bigsqcup_{i}U_{i}\bigr)$. These are closed subsets of $\HH^{3}$ and since $Q^{\times}$ is the union of $Q_{0}$ and the ends $U_{i}$, which meet $Q_{0}$ along $\partial Q_{0}$, we have $\HH^{3}=C\cup P$ and $C\cap P=\Sigma$. Each end $U_{i}\cong F_{i}\times[0,\infty)$ retracts onto $F_{i}\times\{0\}$ by shrinking the interval, and we lift this to $P$ while keeping $C$ fixed. In order to do this we will need a diffeomorphism $P\cong\Sigma\times[0,\infty)$ which maps $\Sigma$ to $\Sigma\times\{0\}$, and we obtain such a diffeomorphism by lifting the product structure of the ends.
        
        Fix an end $U_{i}$ and a point $x_{0}\in F_{i}$ with trivial local group, and write $P_{i}\coloneqq\pi^{-1}(U_{i})$ and $\Sigma_{i}\coloneqq\pi^{-1}(F_{i})$. Since the restriction of an orbifold covering over a suborbifold is again an orbifold covering, both $\pi|_{P_{i}}$ and $\pi|_{\Sigma_{i}}\times\mathrm{id}$ are orbifold coverings of $U_{i}=F_{i}\times[0,\infty)$, with the same fibre over $x_{0}$. Every element of $\pi_{1}^{\orb}(U_{i})$ is represented by a loop in $F_{i}\times\{0\}$, and for both coverings the lifts of such a loop are its lifts to $\Sigma_{i}=\Sigma_{i}\times\{0\}$ under $\pi|_{\Sigma_{i}}$. The two monodromy actions on the fibre therefore agree, and since a possibly disconnected orbifold covering is determined up to equivalence by its monodromy action \cite[Thm.~2.2(ii)]{BoileauMaillotPorti2003}, we obtain a diffeomorphism $\Phi_{i}\colon P_{i}\to\Sigma_{i}\times[0,\infty)$ over $U_{i}$. Moreover, $\Phi_{i}$ maps $\Sigma_{i}$ onto $\Sigma_{i}\times\{0\}$, since these are the parts lying over $F_{i}\times\{0\}$. Taking the disjoint union over the ends, we obtain a diffeomorphism $\Phi\colon P\to\Sigma\times[0,\infty)$ which satisfies $\Phi(\Sigma)=\Sigma\times\{0\}$.
        
        We can now write down the retraction. For $u\in[0,1]$ let $\theta_{u}\colon P\to P$ correspond under $\Phi$ to the map $(a,t)\mapsto(a,(1-u)t)$, which shrinks the interval factor. Then $\theta_{0}=\mathrm{id}_{P}$ and the image of $\theta_{1}$ is $\Sigma$. Now using the decomposition $\HH^{3}=C\cup P$, we can extend this to all of $\HH^{3}$ by defining $
        \Theta_{u}\colon\HH^{3}\to\HH^{3}$ to be the identity on $C$ and $\theta_{u}$ on $P$. Notice that these two definitions agree on the intersection $C\cap P=\Sigma$ because every $\theta_{u}$ fixes $\Sigma$ pointwise. Then since $C\times[0,1]$ and $P\times[0,1]$ are closed sets covering $\HH^{3}\times[0,1]$, the pasting lemma implies that the map $\Theta\colon\HH^{3}\times[0,1]\to \HH^{3}$ given by $(x,u)\mapsto \Theta_{u}(x)$ is continuous. By construction $\Theta$ is the required deformation retraction of $\HH^{3}$ onto $C$, and in particular $C$ is contractible.
        
        Finally, $C$ is $\Gamma$-invariant, and $\pi|_{C}\colon C\to Q_{0}$ is an orbifold covering whose fibres are $\Gamma$-orbits. Thus $C/\Gamma=Q_{0}$, and since $C$ is simply connected, $\pi|_{C}$ is the universal orbifold covering by \S\ref{subsec:orbifolds}.
    \end{proof}
    
    \subsection{Peripheral subgroups}\label{subsec:peripheral}
    Each boundary turnover of $Q_{0}$ contributes a subgroup of $T$, namely the stabiliser in $\Gamma$ of a component of its preimage in $\HH^{3}$.
    
        \begin{proposition}\label{prop:peripheral}
        Suppose $c_{i}\le 0$, and let $F_{i}\subseteq\partial Q_{0}$ be the $i$-th boundary turnover. The components of $\pi^{-1}(F_{i})$ are pairwise disjoint, properly embedded planes in $\HH^{3}$, and $\Gamma$ permutes them transitively. The stabiliser $S_{i}=\Stab_{\Gamma}(Y)$ of a component $Y\subseteq\pi^{-1}(F_{i})$ acts properly discontinuously and cocompactly on $Y$, and $\pi$ induces an isomorphism of orbifolds $Y/S_{i}\cong F_{i}$.
    \end{proposition}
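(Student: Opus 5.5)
We will reuse the covering-space argument from the proof of Theorem \ref{thm:geometry}. The restriction of the orbifold covering $\pi$ to $\Sigma_{i}=\pi^{-1}(F_{i})$ is again an orbifold covering $\pi|_{\Sigma_{i}}\colon\Sigma_{i}\to F_{i}$. This is possibly disconnected, and each component maps onto $F_{i}$, by the component remark in \S\ref{subsec:orbifolds}. Since $\Gamma$ acts transitively on each fibre of $\pi$ and every component of $\Sigma_{i}$ meets the fibre over a fixed regular point $x_{0}\in F_{i}$, the group $\Gamma$ permutes the components transitively.

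The components are pairwise disjoint by definition, being the connected components of a space. They are properly embedded because $F_{i}$ is closed in $Q^{\times}$ and compact: $\Sigma_{i}$ is the preimage of a closed set, hence closed in $\HH^{3}$. Moreover $\Sigma_{i}$ is a smooth surface, since $F_{i}$ is a $2$-dimensional suborbifold meeting the singular locus transversely, so its lift is locally a smooth surface through axes of rotation.

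To see that a component $Y$ is a plane, we use the diffeomorphism $\Phi\colon P\to\Sigma\times[0,\infty)$ from the proof of Theorem \ref{thm:geometry}. The key step is to show that $Y$ is simply connected. Let $P_{Y}$ be the component of $P$ containing $Y$, so that $P_{Y}\cong Y\times[0,\infty)$. We apply Seifert--van Kampen to $\HH^{3}=C\cup P$, which are closed sets meeting in $\Sigma$; collared neighbourhoods make the pieces open. By Theorem \ref{thm:geometry}\ref{geo:retract}, $C$ is contractible, and $\HH^{3}$ is contractible. The pieces $C$ and $P_{Y}$ are glued along the components of $\Sigma$, and every such component is two-sided and separates $\HH^{3}$, by Jordan--Brouwer for properly embedded surfaces in $\R^{3}$. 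So the dual graph is a tree, and $\pi_{1}(\HH^{3})$ is a tree-of-groups amalgam in which $\pi_{1}(Y)$ injects into $\pi_{1}(P_{Y})\cong\pi_{1}(Y)$ and into $\pi_{1}(C)=1$. Injectivity of the vertex groups into the fundamental group of a graph of groups then forces $\pi_{1}(Y)=1$.

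I expect this step to be the main obstacle. The alternative is to first show that $\pi_{1}^{\orb}(F_{i})\to T$ is injective, using incompressibility of turnovers in an irreducible orbifold; the stabiliser would then be $\pi_{1}^{\orb}(F_{i})$ acting on its universal cover $\widetilde{F_{i}}\in\{\EE^{2},\HH^{2}\}$. I would try the van Kampen argument first.

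Once $Y$ is simply connected, the component restriction $\pi|_{Y}\colon Y\to F_{i}$ is the universal orbifold covering by \S\ref{subsec:orbifolds}. Its deck group is the group of elements of $\Gamma$ preserving $Y$, which is exactly $S_{i}$, since deck transformations of $\pi$ preserving the component $Y$ are precisely the stabiliser. Hence $Y/S_{i}\cong F_{i}$ as orbifolds. Since $F_{i}$ is good and not spherical (as $c_{i}\le0$), its universal cover is $\EE^{2}$ or $\HH^{2}$, so $Y$ is a plane. The action of $S_{i}$ on $Y$ is properly discontinuous because it is the restriction of the properly discontinuous $\Gamma$-action to a closed invariant subset. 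It is cocompact because the quotient $F_{i}$ is compact.
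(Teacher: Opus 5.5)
There is a genuine gap, at the step you flag yourself: the van Kampen argument for $\pi_{1}(Y)=1$ is circular, and it cannot be repaired with the ingredients it uses. $C$ is connected, and $\Phi$ identifies each component of $P$ with $Y\times[0,\infty)$ for a single component $Y$ of $\Sigma$. So your graph of spaces is a star with $C$ at the centre, and the tree property (and Jordan--Brouwer) is automatic. The real content is your assertion that the edge group $\pi_{1}(Y)$ injects into the vertex group $\pi_{1}(C)=1$, and that is exactly the statement $\pi_{1}(Y)=1$ you want to prove. Without that injectivity, van Kampen only identifies $\pi_{1}(\HH^{3})$ with the free product of the groups $\pi_{1}(P_{Y})$ modulo the normal closure of the images of the $\pi_{1}(Y)$. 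This quotient is trivial whatever $\pi_{1}(Y)$ is, because each map $\pi_{1}(Y)\to\pi_{1}(P_{Y})$ is an isomorphism, so you learn nothing.

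In fact every ingredient you use also holds for a genus-two handlebody $H$ with an open collar $\partial H\times[0,\infty)$ attached. The universal cover is contractible, the preimage $\widetilde{H}$ of $H$ is contractible, the preimage of the collar is $\partial\widetilde{H}\times[0,\infty)$, and $\partial\widetilde{H}$ is a connected, properly embedded, two-sided, separating surface. Yet $\partial\widetilde{H}$ is $S^{2}$ minus a Cantor set, with free fundamental group of infinite rank, because $\partial H$ is compressible.

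So you need a genuinely three-dimensional input specific to $F_{i}$: the incompressibility you mention as your alternative. A simple closed curve on $S^{2}(\alpha,\beta,\gamma)$ avoiding the cone points cuts the sphere into two discs, one of which contains at most one cone point. Hence a turnover has no essential curves, and the equivariant loop theorem gives $\pi_{1}^{\orb}(F_{i})\hookrightarrow\Gamma$. The paper then concludes in one line: the composite of injections $\pi_{1}(Y)\hookrightarrow\pi_{1}^{\orb}(F_{i})\hookrightarrow\Gamma$ factors through $\pi_{1}(\HH^{3})=1$, so $\pi_{1}(Y)=1$.

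Once that step is replaced, the rest of your proposal is correct and essentially agrees with the paper's proof. That covers closedness and proper embedding of $\pi^{-1}(F_{i})$, transitivity via a regular fibre, identifying $S_{i}$ with the deck group of $\pi|_{Y}$, and proper discontinuity and cocompactness.
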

 
    \begin{proof}
        Firstly, recall that a turnover of non-positive Euler characteristic is incompressible because it carries no essential curves \cite[\S3.1]{BoileauMaillotPorti2003}. Then the equivariant loop theorem \cite[Cor.~3.20]{BoileauMaillotPorti2003} implies that the inclusion $F_{i}\subseteq\HH^{3}/\Gamma$ is $\pi_{1}$-injective, that is, it induces an injection $\pi_{1}^{\orb}(F_{i})\hookrightarrow\Gamma$. Moreover, $F_{i}$ is a closed $2$-suborbifold of $\HH^{3}/\Gamma$, so its preimage $\pi^{-1}(F_{i}) \subseteq \HH^{3}$ is a surface which is closed as a subset, and its components are pairwise disjoint and properly embedded. Further, the group $\Gamma$ acts transitively on the fibres of $\pi$, so it permutes these components transitively.
        
        Now fix a component $Y\subseteq \pi^{-1}(F_{i})$. Then by \S\ref{subsec:orbifolds} the restriction $\pi|_{Y}\colon Y\to F_{i}$ is an orbifold covering, and there is a commutative square
        \[
            \begin{tikzcd}
                Y \arrow[r, "\pi|_{Y}"] \arrow[d, hook] & F_{i} \arrow[d, hook] \\
                \HH^{3} \arrow[r, "\pi"] & \HH^{3}/\Gamma,
            \end{tikzcd}    
        \]
        where the vertical maps are inclusions. It follows that the composition of group homomorphisms $\pi_{1}(Y)\hookrightarrow\pi_{1}^{\orb}(F_{i})\hookrightarrow\Gamma$ factors through $\pi_{1}(\HH^{3})=1$, which implies that $\pi_{1}(Y)=1$, and hence $\pi|_{Y}$ is the universal orbifold covering of $F_{i}$. In particular, $Y$ is $\EE^{2}$ or $\HH^{2}$, depending on whether $F_{i}$ is Euclidean or hyperbolic, and in either case $Y$ is homeomorphic to $\R^{2}$.
         
        Let $S_{i}$ be the stabiliser of $Y$. If $y,y'\in Y$ satisfy $\pi(y)=\pi(y')$, then $y'=\gamma y$ for some $\gamma\in\Gamma$, and $\gamma\in S_{i}$ because $\gamma Y$ is a component meeting $Y$. It follows that $\pi|_{Y}$ induces a homeomorphism $Y/S_{i}\to F_{i}$, and $S_{i}$ acts properly discontinuously and cocompactly on $Y$ because $\Gamma$ acts properly discontinuously and $F_{i}$ is compact. For $y\in Y$ the stabiliser $\Gamma_{y}$ fixes $y$ and therefore preserves $Y$. Then $\Gamma_{y}$ is the stabiliser of $y$ in $S_{i}$, and it is also the local group at $\pi(y)$ by \S\ref{subsec:orbifolds}. In this way the homeomorphism $Y/S_{i}\to F_{i}$ identifies local groups, and hence it is an isomorphism of orbifolds.
    \end{proof}
 
    \begin{corollary}\label{cor:peripheral}
        Suppose $c_{i}\le 0$. Then the stabilisers of the components of $\pi^{-1}(F_{i})$ form a single conjugacy class of subgroups of $T\cong\Gamma$, each isomorphic to $\pi_{1}^{\orb}(F_{i})$. The group $\pi_{1}^{\orb}(F_{i})$ is a 2-dimensional crystallographic group when $c_{i}=0$ and a cocompact Fuchsian triangle group when $c_{i}<0$.
    \end{corollary}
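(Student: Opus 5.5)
The corollary follows from Proposition \ref{prop:peripheral} together with the identification $T\cong\Gamma$ via $\rho_{0}$. The argument has three short steps.

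\emph{Conjugacy.} By Proposition \ref{prop:peripheral}, $\Gamma$ permutes the components of $\pi^{-1}(F_{i})$ transitively. If $Y'=\gamma Y$, then $\Stab_{\Gamma}(Y')=\gamma\,\Stab_{\Gamma}(Y)\,\gamma^{-1}$. So the stabilisers form one conjugacy class, and pulling back along $\rho_{0}$ gives a single conjugacy class of subgroups of $T$.

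\emph{Isomorphism type.} Fix a component $Y$ and put $S_{i}=\Stab_{\Gamma}(Y)$. The proof of Proposition \ref{prop:peripheral} shows that $Y$ is simply connected, being $\EE^{2}$ or $\HH^{2}$. The stabiliser $S_{i}$ acts properly discontinuously on $Y$, with $Y/S_{i}\cong F_{i}$ as orbifolds.

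The one point needing care is faithfulness of this action. An element of $S_{i}$ fixing $Y$ pointwise fixes in particular a point of $Y$ with trivial local group. That point's stabiliser in $\Gamma$ is trivial, so the element is the identity. Hence $S_{i}$ acts faithfully on $Y$, and $Y/S_{i}$ is a good orbifold with simply connected cover $Y$. By \S\ref{subsec:orbifolds}, $Y\to Y/S_{i}$ is then the universal orbifold covering, and $S_{i}\cong\pi_{1}^{\orb}(Y/S_{i})\cong\pi_{1}^{\orb}(F_{i})$.

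An alternative route to the same isomorphism is via the $\pi_{1}$-injectivity of $F_{i}\subseteq\HH^{3}/\Gamma$ from the equivariant loop theorem. Its image is the stabiliser of the lift $Y$ through a chosen basepoint.

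\emph{Type of the group.} We have $\pi_{1}^{\orb}(F_{i})=\Delta(\alpha,\beta,\gamma)$ for the turnover labels. If $c_{i}=0$, Lemma \ref{lem:euclidean-links} identifies it as one of $p3$, $p4$, $p6$, which are $2$-dimensional crystallographic groups. If $c_{i}<0$, then $F_{i}$ is hyperbolic, so $\Delta(\alpha,\beta,\gamma)$ acts on $\HH^{2}$ properly discontinuously and cocompactly (as the double of a hyperbolic triangle, Figure \ref{fig:turnover}) by orientation-preserving isometries. It is therefore a cocompact Fuchsian triangle group.

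There is no real obstacle here. The only step needing care is the faithfulness of the $S_{i}$-action, so that the deck-group identification of \S\ref{subsec:orbifolds} applies.
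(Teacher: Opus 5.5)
Your proposal is correct and follows the paper's proof essentially step for step. Conjugacy comes from transitivity together with $\Stab_{\Gamma}(\gamma Y)=\gamma S_{i}\gamma^{-1}$; the isomorphism $S_{i}\cong\pi_{1}^{\orb}(F_{i})$ comes from $Y$ being a simply connected cover of $Y/S_{i}\cong F_{i}$; and the type comes from Lemma~\ref{lem:euclidean-links} and the hyperbolic triangle group. Your explicit faithfulness check, via a point of $Y$ lying over a non-singular point of $F_{i}$, is a correct detail that the paper leaves implicit when it appeals to \S\ref{subsec:orbifolds}.
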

 
    \begin{proof}
        Let $S_{i}$ be the stabiliser of a component $Y$. Then since $Y\cong\R^{2}$ is simply connected, the covering $Y\to Y/S_{i}\cong F_{i}$ of Proposition \ref{prop:peripheral} is universal with deck group $S_{i}$, and hence $S_{i}\cong\pi_{1}^{\orb}(F_{i})$ (\S\ref{subsec:orbifolds}). Now because the stabiliser of $\g Y$ is $\g S_{i}\g^{-1}$ and $\Gamma$ permutes the components transitively, it follows that the stabilisers form a single conjugacy class. For $c_{i}=0$ the group $\pi_{1}^{\orb}(F_{i})$ is $p3$, $p4$ or $p6$ by Lemma \ref{lem:euclidean-links}, and for $c_{i}<0$ it is a cocompact Fuchsian triangle group \cite[\S7.2]{Ratcliffe2006}.
    \end{proof}

    These peripheral subgroups are used in Section \ref{sec:vcdT} to bound the virtual cohomological dimension of $T$ from below and in Section \ref{sec:H2} to count the boundary components of a manifold cover of $Q_{0}$.

    \section{Virtual cohomological dimension of \texorpdfstring{$T$}{T}}\label{sec:vcdT}
    We compute $\vcd(T)$ in two steps: Selberg's lemma \cite[Thm.~7.6.7]{Ratcliffe2006} lets us pass to a torsion-free subgroup of finite index, which gives an upper bound, and the peripheral subgroups of Proposition \ref{prop:peripheral} give a lower bound.

    \begin{lemma}\label{lem:core}
        Let $T_{0}$ be a torsion-free normal subgroup of finite index in $\Gamma=\rho_{0}(T)\cong T$. Then $M_{0}\coloneqq C/T_{0}$ is a compact orientable aspherical $3$-manifold with $\pi_{1}(M_{0})\cong T_{0}$. Further, the boundary $\partial M_{0}$ is the union, over the $i$ with $c_{i}\le 0$, of the closed orientable surfaces $\pi^{-1}(F_{i})/T_{0}$.
    \end{lemma}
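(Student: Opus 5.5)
The plan is to view $M_{0}=C/T_{0}$ as a quotient of the contractible space $C$ by a free, properly discontinuous action. First, $T_{0}$ is torsion-free and $\Gamma$ acts on $\HH^{3}$ by isometries with finite point stabilisers $\Gamma_{y}$. Hence every stabiliser in $T_{0}$ is a finite subgroup of a torsion-free group, so it is trivial, and $T_{0}$ acts freely and properly discontinuously on $\HH^{3}$. It follows that $\HH^{3}/T_{0}$ is an orientable hyperbolic $3$-manifold, since $T_{0}\le\PSL(2,\C)$ preserves orientation. By Theorem \ref{thm:geometry}\ref{geo:retract}, $C$ is $\Gamma$-invariant and hence $T_{0}$-invariant. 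Thus $C\to C/T_{0}$ is a regular covering with deck group $T_{0}$. Because $C$ is contractible, it is the universal cover of $M_{0}$, so $\pi_{1}(M_{0})\cong T_{0}$ and $M_{0}$ is aspherical.

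Next I will show that $M_{0}$ is a manifold with boundary. The set $C=\pi^{-1}(Q_{0})$ is the preimage of a codimension-zero suborbifold with boundary $\partial Q_{0}$. Locally near $\Sigma=\pi^{-1}(\partial Q_{0})$ it is a half-space in a chart, because $\partial Q_{0}$ is a properly embedded surface and its preimage consists of properly embedded planes (Proposition \ref{prop:peripheral}). So $C$ is a $3$-manifold with boundary $\Sigma$. The free action of $T_{0}$ preserves this structure, which makes $M_{0}$ an orientable $3$-manifold with $\partial M_{0}=\Sigma/T_{0}$. Decomposing $\Sigma$ as the disjoint union over $i$ of the $T_{0}$-invariant closed sets $\pi^{-1}(F_{i})$ then gives $\partial M_{0}=\bigcup_{i}\pi^{-1}(F_{i})/T_{0}$. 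Each piece is a surface because $T_{0}$ acts freely on it, and it is orientable because it is a boundary component of an orientable manifold, or equivalently because $T_{0}$ preserves the coorientation of $\Sigma$ pointing into $C$.

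For compactness, the natural map $M_{0}=C/T_{0}\to C/\Gamma=Q_{0}$ is a finite-sheeted covering of degree $[\Gamma:T_{0}]$, since $T_{0}$ is normal of finite index. Here $Q_{0}$ is compact. To make this rigorous I will pick a compact set $K\subseteq C$ with $\Gamma K=C$, for instance the union of closed neighbourhoods of finitely many lifts of a finite cover of $Q_{0}$. Then $\bigcup_{\gamma}\gamma K$, taken over coset representatives of $T_{0}$ in $\Gamma$, is compact and surjects onto $M_{0}$. The same argument shows that each $\pi^{-1}(F_{i})/T_{0}$ finitely covers $F_{i}$ and is therefore closed. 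Each $\pi^{-1}(F_{i})/T_{0}$ may have several components, one for each double coset, which is why the statement says ``union''.

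I expect the main obstacle to be the local boundary structure: verifying that $C$ is a genuine manifold with boundary near singular points of $\Sigma$, namely the cone points of the turnovers. At those points the local model is a half-ball quotiented by a finite cyclic group fixing an axis transverse to the boundary. Its preimage in $\HH^{3}$ is an honest closed half-ball, because the turnover meets the singular locus transversely (\S\ref{subsec:structures}). I would first check this local model using Theorem \ref{thm:geometry}\ref{geo:prod} and the product structure $P\cong\Sigma\times[0,\infty)$ built in its proof. That product structure makes $\Sigma$ bicollared in $\HH^{3}$ with $C$ on one side, which gives the boundary structure directly.
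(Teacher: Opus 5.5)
Your proposal is correct and follows essentially the same route as the paper: $T_{0}$ acts freely and properly discontinuously because it is torsion-free, $C$ is contractible by Theorem~\ref{thm:geometry}\ref{geo:retract}, compactness comes from $[\Gamma:T_{0}]<\infty$ and compactness of $Q_{0}$, and $\partial M_{0}$ is the union of the $\pi^{-1}(F_{i})/T_{0}$. The main differences are that you deduce orientability of the boundary from orientability of $M_{0}$, whereas the paper identifies each component with $Y/(S_{i}\cap T_{0})$ covering the orientable turnover $F_{i}$ (an identification it reuses in Theorem~\ref{thm:vcdT}), and that $P\cong\Sigma\times[0,\infty)$ only collars $\Sigma$ on the $P$ side, so the manifold-with-boundary structure of $C$ should come from your half-ball orbifold charts of $Q_{0}$ (which the paper simply asserts) rather than from that product structure alone.
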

 
    \begin{proof}
        The group $T_{0}$ is a torsion-free discrete subgroup of $\PSL(2,\C)$, so it acts freely and properly discontinuously on $\HH^{3}$, and hence on the $\Gamma$-invariant subset $C=\pi^{-1}(Q_{0})$. This subset is contractible by Theorem \ref{thm:geometry}\ref{geo:retract}, so $M_{0}=C/T_{0}$ is an aspherical orientable manifold with universal cover $C$ and $\pi_{1}(M_{0})\cong T_{0}$. Furthermore, because $Q_{0}$ is compact and the index $[\Gamma:T_{0}]$ is finite, it follows that $M_{0}$ is compact.
        
        Now suppose $c_{i}\le 0$ and let $F_{i}\subseteq\partial Q_{0}$ be the $i$-th boundary turnover. By Proposition \ref{prop:peripheral} the components of $\pi^{-1}(F_{i})$ form one $\Gamma$-orbit with stabiliser $S_{i}$. It follows that the components of $\partial M_{0}$ lying over $F_{i}$ are indexed by the double cosets $T_{0}\backslash \Gamma/S_{i}$, and $\Pi\coloneqq \Gamma/T_{0}$ acts transitively on them. Each of them has the form $\g Y/(T_{0}\cap\Stab_\Gamma(\g Y))$ for some $\g\in\Gamma$ and some component $Y$ of $\pi^{-1}(F_{i})$. Since $T_{0}$ is normal, this quotient is homeomorphic to $Y/(S_{i}\cap T_{0})$, and thus it will suffice to treat this case.

        The subgroup $S_{i}\cap T_{0}$ is torsion-free and has finite index in $S_{i}$, and therefore acts on $Y$ freely, properly discontinuously and cocompactly. Then the quotient $Y/(S_{i}\cap T_{0})$ is a closed aspherical surface which is orientable because it covers the orientable turnover $F_{i}$, and has Euler characteristic $[S_{i}:S_{i}\cap T_{0}]\,c_{i}$. In particular, the quotient is a torus when $c_{i}=0$ and has genus at least two when $c_{i}<0$.
    \end{proof}
     
    \begin{theorem}\label{thm:vcdT}
        Suppose $k=2$ and $s\ge 2$. Then $\vcd(T)=2$.
    \end{theorem}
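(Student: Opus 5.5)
We pass to a torsion-free normal subgroup of finite index and compute its cohomological dimension using the compact aspherical manifold of Lemma \ref{lem:core}. Since $\Gamma\le\PSL(2,\C)$ is finitely generated, Selberg's lemma \cite[Thm.~7.6.7]{Ratcliffe2006} supplies such a subgroup $T_{0}\trianglelefteq\Gamma$. By Lemma \ref{lem:core}, $M_{0}=C/T_{0}$ is a compact orientable aspherical $3$-manifold with $\pi_{1}(M_{0})\cong T_{0}$, so $M_{0}$ is a finite model for $K(T_{0},1)$. Hence $T_{0}$ is of type $\FL$, $T$ is of type $\VFL$, and $\vcd(T)=\cd(T_{0})\le 3$.

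\textbf{Upper bound.} To reduce the bound to $2$, we use that $\partial M_{0}\neq\varnothing$. This holds because $k=2$, so by Lemma \ref{lem:core} the boundary contains the surfaces $\pi^{-1}(F_{i})/T_{0}$. A compact connected $3$-manifold with nonempty boundary collapses onto a $2$-dimensional spine: triangulate it and collapse simplices inward from a free boundary face. Consequently $M_{0}$ is homotopy equivalent to a finite $2$-complex, and $\cd(T_{0})\le 2$. Alternatively, Lefschetz duality gives
\[
\Ho^{3}(T_{0},\Z T_{0})\cong \Ho^{3}_{c}(C;\Z)\cong \Ho_{0}^{\mathrm{lf}}\text{-type term for }(C,\partial C),
\]
and this vanishes because $\partial C\ne\varnothing$ and every component of $C$ meets $\partial C$. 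Since $T_{0}$ is of type $\FP$, the vanishing of $\Ho^{3}(T_{0},\Z T_{0})$ forces $\cd(T_{0})\le 2$ \cite[Ch.~VIII, Prop.~6.7]{brown}. I would present the spine argument as the main route and give duality as a check.

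\textbf{Lower bound.} By Corollary \ref{cor:peripheral}, $T$ contains a peripheral subgroup $S_{1}\cong\pi_{1}^{\orb}(F_{1})$, and this is either a crystallographic group $p3$, $p4$, $p6$ or a cocompact Fuchsian triangle group. The intersection $S_{1}\cap T_{0}$ has finite index in $S_{1}$, and by the proof of Lemma \ref{lem:core} it is the fundamental group of a closed orientable aspherical surface, namely a torus or a surface of genus at least $2$. Such a surface group has $\Ho^{2}(S_{1}\cap T_{0};\Z)\cong\Z\neq0$, so its cohomological dimension is $2$. Since cohomological dimension is monotone under passing to subgroups, $\cd(T_{0})\ge\cd(S_{1}\cap T_{0})=2$. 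Combining the two bounds gives $\vcd(T)=\cd(T_{0})=2$.

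\textbf{Main obstacle.} The only delicate step is the upper bound, specifically justifying that a compact $3$-manifold with nonempty boundary, or at least one component of boundary in each component of $M_{0}$, deformation retracts onto a $2$-complex. Here $M_{0}$ is connected because $C$ is connected, being contractible, and it is smooth and compact. The collapsing argument therefore applies directly from any triangulation, and I would cite it as standard rather than reproving it.
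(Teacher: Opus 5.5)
Your argument is correct and is essentially the paper's proof: the upper bound comes from collapsing the compact aspherical manifold $M_{0}$, whose boundary is non-empty because $k=2$, onto a $2$-dimensional spine that is a $K(T_{0},1)$, and the lower bound comes from the closed surface group $S_{i}\cap T_{0}\le T_{0}$ together with monotonicity of $\cd$. The duality aside is not needed; if you keep it, the precise chain is $\Ho^{3}(T_{0},\Z T_{0})\cong\Ho^{3}_{c}(C;\Z)\cong\Ho_{0}(C,\partial C;\Z)=0$, with ordinary rather than locally finite homology on the right.
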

     
    \begin{proof}
        Let $T_{0}$ and $M_{0}$ be as in Lemma \ref{lem:core}, so that $\vcd(T)=\cd(T_0)$ and $M_{0}$ is a compact orientable aspherical $3$-manifold with $\pi_{1}(M_{0})=T_{0}$. Since $k=2$, the boundary $\partial M_{0}$ is non-empty by Lemma \ref{lem:core}. A compact $3$-manifold with non-empty boundary collapses onto a $2$-dimensional subcomplex \cite[proof of Thm.~3]{ratcliffe}, and since $M_{0}$ is aspherical this subcomplex is a $K(T_{0},1)$. Hence $\cd(T_{0})\le 2$. 
        
        For the lower bound fix $i\in\{1,2\}$ with $c_{i}\le 0$ and let $Y/(S_{i}\cap T_{0})$ be as in the proof of Lemma \ref{lem:core}. This quotient is a closed aspherical surface with fundamental group $S_{i}\cap T_{0}$, and hence $\cd(S_{i}\cap T_{0})=2$. Then since $S_{i}\cap T_{0}$ is a subgroup of $T_{0}$, it follows that $\cd(T_{0})\ge 2$ \cite[Ch.~VIII, Prop.~2.4(a)]{brown}, and hence $\vcd(T)=\cd(T_{0})=2$.
    \end{proof}

    \begin{corollary}\label{cor:Tvfl}
        Suppose $k=2$ and $s\ge 2$. The group $T$ is of type $\VFL$.
    \end{corollary}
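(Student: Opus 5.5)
We need some torsion-free subgroup of finite index in $T$ that is of type $\FL$. The plan is to take the subgroup $T_{0}$ from Lemma \ref{lem:core} and show it has a finite $K(T_{0},1)$.

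First, produce a torsion-free normal subgroup of finite index. Since $T\cong\Gamma=\rho_{0}(T)$ is a finitely generated subgroup of $\PSL(2,\C)$, Selberg's lemma \cite[Thm.~7.6.7]{Ratcliffe2006} applies. It gives a torsion-free subgroup of finite index. Replacing it by its normal core, the intersection of its finitely many conjugates, we obtain a torsion-free normal subgroup $T_{0}\le\Gamma$ of finite index. This is exactly the hypothesis of Lemma \ref{lem:core}.

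Second, Lemma \ref{lem:core} shows that $M_{0}=C/T_{0}$ is a compact aspherical $3$-manifold with $\pi_{1}(M_{0})\cong T_{0}$. A compact manifold has the homotopy type of a finite CW complex; in dimension three one can triangulate it (Moise). We can also use the collapse onto a finite $2$-dimensional subcomplex already invoked in the proof of Theorem \ref{thm:vcdT}. Either way we get a finite $K(T_{0},1)$.

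Third, the cellular chain complex of its universal cover is a finite-length resolution of $\Z$ over $\Z T_{0}$ by finitely generated free modules. Hence $T_{0}$ is of type $\FL$, and $T$ is of type $\VFL$, with $\vcd(T)=2$ by Theorem \ref{thm:vcdT}.

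The only point needing any care is the finiteness of the CW structure on $M_{0}$. This rests on the compactness of $M_{0}$, which Lemma \ref{lem:core} already provides via the compactness of $Q_{0}$ and the finiteness of the index $[\Gamma:T_{0}]$. I therefore expect no genuine obstacle; the argument is a direct assembly of Selberg's lemma with Lemma \ref{lem:core}.
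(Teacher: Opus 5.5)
Your proposal is correct and is essentially the paper's argument: the paper's proof observes that the subgroup $T_{0}$ of Lemma~\ref{lem:core} is the fundamental group of a compact aspherical $3$-manifold and cites Brown to conclude that $T$ is of type $\VFL$. You make explicit the two points the paper leaves implicit, namely the existence of $T_{0}$ via Selberg's lemma and the normal core, and the passage from the compact aspherical manifold $M_{0}$ to a finite $K(T_{0},1)$ and hence to a finite free resolution showing that $T_{0}$ is of type $\FL$.
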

    \begin{proof}
        By Lemma \ref{lem:core} the group $T$ has a finite-index subgroup $T_{0}$ which is the fundamental group of a compact aspherical $3$-manifold. The result then follows from \cite[Ch.~VIII, Prop.~8.1]{brown}.
    \end{proof}

    \section{Abelianisations}\label{sec:ab}
    
    Both $\Tab$ and $\Gab$ are quotients of $\Z^{2}=\langle e_{1},e_{2}\rangle$, where $e_{1},e_{2}$ are the images of $f,g$, and for $w\in\Z^{2}$ we write $\bar w$ for its class in the quotient at hand. In particular, $\bar c$ will denote the class of $c=f^{l}$, that is of $le_{1}$, in $\Gab$. Since $T=G/\langle c\rangle$, we have
    \[
        \Tab=\Gab\big/\langle\bar c\rangle, \quad\text{so}\quad |\Gab|=|\Tab|\cdot\ord(\bar c)
    \]
    whenever these are finite.
    
    For an integer matrix $R$ whose rows span a subgroup $\Lambda\le\Z^{2}$, let $\mu_{1}(R)$ be the gcd of the entries of $R$ and $\mu_{2}(R)$ the gcd of its $2\times 2$ minors, with $\mu_{2}(R)=0$ if these all vanish. Then by the theory of the Smith normal form \cite[\S2.4.4]{smithnorm}, we have
    \[
        \Z^{2}/\Lambda\cong\Z/\mu_{1}(R)\oplus\Z/\bigl(\mu_{2}(R)/\mu_{1}(R)\bigr),
    \]
    with the convention that $\Z/0\coloneqq\Z$. We also use the fact that, for $w\in\Z^{2}$ with $\mu_{2}(R)\ne 0$,
    \[
        \ord(\bar w)=[\Lambda+\Z w:\Lambda] =\frac{[\Z^{2}:\Lambda]}{[\Z^{2}:\Lambda+\Z w]} =\frac{\mu_{2}(R)}{\mu_{2}(R_w)},
    \]
    where $R_{w}$ denotes $R$ with the row $w$ appended.
    
    \begin{proposition}\label{prop:Tab}
        We have $\Tab=\Z^{2}/\Lambda_{T}$, where $\Lambda_{T}$ is spanned by the rows of
        \[
          R_{T}=\begin{bmatrix} 
                l & 0\\ 
                0 & m\\ 
                n\sigma_{f} & n\sigma_{g}
          \end{bmatrix}.
        \]
        Here $\mu_{1}(R_{T})=\nu$ and $\mu_{2}(R_{T})\ne 0$, so $\Tab$ is finite with invariant factors $\nu$ and $\mu_{2}(R_{T})/\nu$. Explicitly, we have
        \[
            \Tab\cong
            \begin{cases}
                \Z/\gcd(l,m)\oplus\Z/\lcm(l,m)\cong\Z/l\oplus\Z/m, & s \textup{ even},\ |\Tab|=lm,\\[2pt]
                \Z/\gcd(l,m,n)\oplus\Z/\bigl(\gcd(lm,ln,mn)/\gcd(l,m,n)\bigr), & s \textup{ odd},\ |\Tab|=\gcd(lm,ln,mn).
            \end{cases}
        \]
        In particular $\Hom(\Tab,\Q)=0$, and $\Tab$ is cyclic if and only if $\nu=1$.
    \end{proposition}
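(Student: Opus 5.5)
The abelianisation of a finitely presented group is $\Z^{2}$ modulo the exponent-sum vectors of the relators. For $T$ the relators are $f^{l}$, $g^{m}$ and $\W^{n}$, with exponent-sum vectors $(l,0)$, $(0,m)$ and $n(\sigma_{f},\sigma_{g})$. These are exactly the rows of $R_{T}$, so $\Tab=\Z^{2}/\Lambda_{T}$, and everything else is a Smith normal form computation using the formula $\Z^{2}/\Lambda\cong\Z/\mu_{1}\oplus\Z/(\mu_{2}/\mu_{1})$ recalled above. First, $\mu_{1}(R_{T})=\gcd(l,m,n\sigma_{f},n\sigma_{g})=\nu$ holds by the definition \eqref{eq:nu-g}.

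Next I will compute the $2\times 2$ minors: they are $lm$, $ln\sigma_{g}$ and $-mn\sigma_{f}$. Hence $\mu_{2}(R_{T})=\gcd(lm,ln\sigma_{g},mn\sigma_{f})$, which is nonzero since $lm\ne0$. I then split on the parity of $s$, using the exponent sums from \S\ref{subsec:groups}.

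\emph{$s$ even.} Here $\sigma_{f}=\sigma_{g}=0$, so the third row vanishes. Then $\mu_{2}=lm$ and $\mu_{1}=\gcd(l,m)$, giving invariant factors $\gcd(l,m)$ and $lm/\gcd(l,m)=\lcm(l,m)$. Directly, $\Tab\cong\Z/l\oplus\Z/m$, and by the Chinese remainder theorem this is isomorphic to $\Z/\gcd(l,m)\oplus\Z/\lcm(l,m)$.

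\emph{$s$ odd.} Here $\sigma_{f},\sigma_{g}\in\{\pm1\}$, so the signs disappear inside the gcds. This gives $\mu_{1}=\gcd(l,m,n)$ and $\mu_{2}=\gcd(lm,ln,mn)$, which yields the stated invariant factors and $|\Tab|=\mu_{2}$.

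Finiteness follows from $\mu_{2}\ne0$, so $\Tab$ is a finite group. Consequently $\Hom(\Tab,\Q)=0$. A finite abelian group with invariant factors $\nu\mid\mu_{2}/\nu$ is cyclic exactly when the first factor $\nu$ equals $1$.

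There is no real obstacle here. The only care needed is bookkeeping: the minors must carry the signs $\sigma_{f},\sigma_{g}$ correctly, these signs are harmless in the gcds, and the even case must be identified with $\Z/l\oplus\Z/m$.
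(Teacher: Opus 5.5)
Your proposal is correct and follows the paper's proof essentially line for line. Both identify the rows of $R_{T}$ as the exponent-sum vectors of the relators, compute $\mu_{1}(R_{T})=\nu$ and the minors $lm$, $ln\sigma_{g}$, $-mn\sigma_{f}$, split on the parity of $s$, and read off the invariant factors from the Smith normal form.
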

    
    \begin{proof}
        Abelianising the presentation of $T$, we can identify $\Tab$ with $\Z^{2}$ modulo the exponent-sum vectors of the relators $f^{l}$, $g^{m}$ and $\W^{n}$, and these vectors are the rows of $R_{T}$. The gcd of these entries is $\mu_{1}(R_{T})=\gcd(l,m,n\sigma_{f},n\sigma_{g})=\nu$, and the three $2\times2$ minors are $lm$, $ln\sigma_{g}$ and $-mn\sigma_{f}$. For $s$ even the exponent sums $\sigma_{f}$ and $\sigma_{g}$ vanish and only the first minor survives, giving $\mu_{2}(R_{T})=lm$. For $s$ odd these exponent sums are units, and $\mu_{2}(R_{T})=\gcd(lm,ln,mn)$. In either case $\mu_{2}(R_{T})\neq 0$, and the Smith normal form yields the two displayed groups, where in the even case we have used $lm/\gcd(l,m)=\lcm(l,m)$.
    \end{proof}
    
    \begin{proposition}\label{prop:Gab}
        We have $\Gab=\Z^{2}/\Lambda_{G}$, where $\Lambda_{G}$ is spanned by the rows of
        \begin{equation}\label{eq:J}
            J=\begin{bmatrix} 
              n\sigma_{f}-l & n\sigma_{g}\\ 
              n\sigma_{f} & n\sigma_{g}-m
            \end{bmatrix},
            \quad
            \det J=lm-n(l\sigma_{g}+m\sigma_{f}),
        \end{equation}
        and $\mu_{1}(J)=\nu$ in all cases. Consequently the following hold.
        \begin{enumerate}[label=\textup{(\alph*)}]
            \item If $s$ is even, then $\det J=lm$ and $\Gab\cong\Z/\gcd(l,m)\oplus \Z/\lcm(l,m)$. Moreover $\bar c=0$, so that $\Gab\to\Tab$ is an isomorphism.
            
            \item If $s$ is odd and $\det J\ne 0$, then
              \[
                \Gab\cong\Z/\nu\oplus\Z/\tfrac{|\det J|}{\nu},
                \quad
                \nu=\gcd(l,m,n),
                \quad
                \ord(\bar c)=\frac{|\det J|}{\gcd\bigl(\det J,\,l\gcd(m,n)\bigr)}.
              \]
              
            \item If $s$ is odd and $\det J=0$, then $\Gab\cong\Z/\nu\oplus\Z$. Moreover, the class $\bar c$ has infinite order, and its image in the free quotient generates a subgroup of index $\lcm(l,m)$.
        \end{enumerate}
    \end{proposition}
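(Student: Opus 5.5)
Abelianising $G=\langle f,g\mid f^{l}=g^{m}=W^{n}\rangle$, I will use the two relators $f^{l}W^{-n}$ and $g^{m}W^{-n}$, whose exponent-sum vectors are $(l-n\sigma_f,-n\sigma_g)$ and $(-n\sigma_f,m-n\sigma_g)$. Up to sign these are the rows of $J$, so $\Gab=\Z^2/\Lambda_G$, and expanding gives $\det J=lm-n(l\sigma_g+m\sigma_f)$. For $\mu_1(J)$, the entries generate the same ideal as $n\sigma_f,n\sigma_g,l,m$ (subtract entries in the same column), so $\mu_1(J)=\gcd(l,m,n\sigma_f,n\sigma_g)=\nu$. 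The Smith normal form formula recalled before Proposition~\ref{prop:Tab} then gives $\Gab\cong\Z/\nu\oplus\Z/(|\det J|/\nu)$ whenever $\det J\neq0$, and $\Gab\cong\Z/\nu\oplus\Z$ when $\det J=0$, since then $\mu_2=0$.

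For (a), $\sigma_f=\sigma_g=0$ gives $J=\operatorname{diag}(-l,-m)$, so $\det J=lm$ and $\Gab\cong\Z/l\oplus\Z/m$. Moreover $\bar c=le_1\in\Lambda_G$, so $\bar c=0$, and $\Gab\to\Tab$ is a surjection between groups of equal order $lm$ (Proposition~\ref{prop:Tab}), hence an isomorphism.

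For (b), I will use $\ord(\bar c)=\mu_2(J)/\mu_2(J_c)$ with $c\mapsto(l,0)$. The $2\times2$ minors of $J_c$ are $\det J$, $-ln\sigma_g$ and $l(n\sigma_g-m)$. Because $\sigma_g=\pm1$, the gcd of the last two is $l\gcd(n,m)$, so $\mu_2(J_c)=\gcd(\det J,l\gcd(m,n))$, which is the stated formula. Here $\nu=\gcd(l,m,n)$ because $\sigma_f,\sigma_g$ are units.

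For (c), which I expect to be the fiddliest step, $\Lambda_G$ has rank one, since $J\neq0$ because $\mu_1=\nu\ge1$. The free quotient is $\Z^2$ modulo the saturation of $\Lambda_G$, which is the rational line $\ell$ spanned by a row of $J$. Then $\bar c$ has infinite order if and only if $(l,0)\notin\ell$. If $(l,0)$ were a multiple of the row $(n\sigma_f-l,n\sigma_g)$, we would need $n\sigma_g=0$, which is impossible. The remaining claim is about the index of the image of $\bar c$ in $\Z^2/\ell\cong\Z$, and the plan is to compute it as $|\langle v,(l,0)\rangle|$, where $v$ is a primitive normal vector to $\ell$. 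The constraint $\det J=0$ says $lm=n(l\sigma_g+m\sigma_f)$. The second row $(n\sigma_f,n\sigma_g-m)$ lies on $\ell$, so $v$ is proportional to $(n\sigma_g-m,-n\sigma_f)$, and the index is $l|n\sigma_g-m|/\gcd(n\sigma_g-m,n)$. I will then substitute the relation $lm=n(l\sigma_g+m\sigma_f)$; for instance, the relation gives $l(m-n\sigma_g)=nm\sigma_f$, which should simplify this expression to $\lcm(l,m)$. This last gcd bookkeeping, done case by case on the signs of $\sigma_f,\sigma_g$ if necessary, is the step where I expect the main work to lie.
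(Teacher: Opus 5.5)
Your setup, your computation of $\mu_1(J)$ and parts (a) and (b) are correct and agree with the paper's proof. Your column-subtraction argument for $\mu_1(J)$ is a tidy uniform version of the paper's case split. Part (c) also uses the paper's idea: evaluate a primitive linear functional vanishing on $\Lambda_G$ at $(l,0)$. However, you have not finished it, and your choice of normal vector makes the finish harder than it needs to be.

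If you carry out your own substitution, you get $l|n\sigma_g-m|=nm$ and $\gcd(n\sigma_g-m,n)=\gcd(m,n)$. So your index is $nm/\gcd(m,n)=\lcm(m,n)$, which is not visibly $\lcm(l,m)$. The missing step is the identity $\lcm(m,n)=\lcm(l,m)$ under $\det J=0$.

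This identity is true, and no sign-by-sign case analysis is needed. Dividing $\det J=0$ by $lmn$ gives $\sigma_f/l+\sigma_g/m-1/n=0$. For each prime $p$, the minimum $p$-adic valuation in a vanishing sum of three terms is attained at least twice. Hence the largest of $v_p(l),v_p(m),v_p(n)$ occurs at least twice, which forces $\max(v_p(l),v_p(m))=\max(v_p(m),v_p(n))$.

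The paper avoids this step entirely by choosing a better normal vector. The $2\times2$ determinant of either row of $J$ against $(l,-m)$ equals $\det J$. So $\det J=0$ says exactly that $\ell$ is the line through $(l,-m)$, and the primitive normal vector is $(m,l)/\eta$. This is the paper's map $\psi(x,y)=(mx+ly)/\eta$. It is visibly surjective and kills both rows of $J$, and its value on $(l,0)$ is $lm/\eta=\lcm(l,m)$ immediately. With that substitution your plan for (c) becomes the paper's argument verbatim.
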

    
    \begin{proof}
        For the relators of $G$ we use $W^{n}f^{-l}$ and $W^{n}g^{-m}$, whose exponent-sum vectors are the rows of $J$. It follows that $\mu_{1}(J)=\gcd(n\sigma_{f}-l,\; n\sigma_{g},\; n\sigma_{f},\; n\sigma_{g}-m)$. For $s$ even this is $\gcd(l,0,0,m)=\gcd(l,m)$. For $s$ odd the entries $n\sigma_{f}$ and $n\sigma_{g}$ have absolute value $n$, so the subgroup of $\Z$ generated by the four entries contains $n$ and hence also $(n\sigma_{f}-l)-n\sigma_{f}=-l$ and $(n\sigma_{g}-m)-n\sigma_{g}=-m$. Conversely $n$, $l$ and $m$ generate all four entries. Hence $\mu_{1}(J)=\gcd(l,m,n)$, and in both cases $\mu_{1}(J)=\nu=\mu_{1}(R_T)$.

        \begin{enumerate}[label=\textup{(\alph*)}]
            \item If $s$ is even, then $\sigma_{f}=\sigma_{g}=0$, so $\det J=lm\ne 0$ and the Smith normal form gives the group. Since the first row of $J$ is $(-l,0)$, we have $-le_{1}\in\Lambda_{G}$ and hence $\bar c=0$. Thus $\Gab\to\Gab/\langle \bar c\rangle=\Tab$ is an isomorphism.
    
            \item The Smith normal form gives the group, since $\mu_{2}(J)=|\det J|$ and $\mu_{1}(J)=\nu$. To find $\ord(\bar c)$ we apply the formula for $\ord(\bar w)$ above with $w=le_{1}$. The extra $2\times 2$ minors of $J_{w}$ are $-ln\sigma_{g}$ and $-l(n\sigma_{g}-m)$, and $|\sigma_{g}|=1$ in this case. Thus $\mu_{2}(J_{w})=\gcd(|\det J|,lm,ln)=\gcd(\det J,l\gcd(m,n))$.
    
            \item Now $\mu_{2}(J)=0$ and $\mu_{1}(J)=\nu\ne 0$, and the Smith normal form gives $\Gab\cong\Z/\nu\oplus\Z$. For the class of $c$ consider the map (cf.\ \cite[proof of Lem.~1]{ellistri})
            \begin{equation*}\label{eq:psi}
              \psi\colon\Z^{2}\to\Z,\quad \psi(x,y)=\frac{mx+ly}{\eta} .
            \end{equation*}
            This map is surjective, since $\psi(1,0)=m/\eta$ and $\psi(0,1)=l/\eta$ are coprime. The kernel of $\psi$ is $\Z w$ with $w=(l,-m)/\eta\in\Z^{2}$ primitive, and $\psi$ vanishes on both rows of $J$, since
            \[
              \psi(n\sigma_{f}-l,n\sigma_{g}) = \psi(n\sigma_{f},n\sigma_{g}-m) = \frac{n(m\sigma_{f}+l\sigma_{g})-lm}{\eta} = -\frac{\det J}{\eta} = 0.
            \] 
            Then $\psi$ descends to a surjection $\bar\psi\colon\Gab\to\Z$ with finite kernel $\Z w/\Lambda_{G}$. It follows that this kernel is the torsion subgroup $\Z/\nu$ of $\Gab$, and $\bar\psi$ is the projection onto the free quotient. Finally, $\bar\psi(\bar c)=\psi(l,0)=ml/\eta=\lcm(l,m)\ne 0$, so $\bar c$ has infinite order and its image generates a subgroup of index $\lcm(l,m)$ in $\bar\psi(\Gab)=\Z$. \qedhere
        \end{enumerate}
    \end{proof}
    
    The map $\psi$ from the proof of Proposition \ref{prop:Gab}(c) also gives a direct proof that $c$ has infinite order in $G$ when $\det J=0$.
    \begin{lemma}\label{lem:Psi}
        Suppose $\det J=0$. Then
        \[
            \Psi\colon G\to\Z, \quad \Psi(f)=\frac{m}{\eta}, \quad \Psi(g)=\frac{l}{\eta}
        \]
        is a well-defined homomorphism with $\Psi(c)=\lcm(l,m)\ne 0$. In particular $\langle c\rangle\cong\Z$.
    \end{lemma}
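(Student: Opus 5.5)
To define $\Psi$ on $G$ we assign the integers $\Psi(f)=m/\eta$ and $\Psi(g)=l/\eta$ to the generators of the free group $\langle f,g\rangle$. We then check that this assignment respects the defining relations $f^{l}=g^{m}=W^{n}$ of $G$. Since $\Z$ is abelian, the value of $\Psi$ on any word depends only on its exponent-sum vector. So $\Psi$ on a word $w$ equals $\psi(\sigma_f(w),\sigma_g(w))$, where $\psi(x,y)=(mx+ly)/\eta$ is the map from the proof of Proposition~\ref{prop:Gab}(c). It therefore suffices to show that $\psi$ vanishes on the exponent-sum vectors of the relators $W^{n}f^{-l}$ and $W^{n}g^{-m}$, which are the rows of $J$.

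For the check, both rows give
\[
\psi = \frac{n(m\sigma_f+l\sigma_g)-lm}{\eta} = -\frac{\det J}{\eta}.
\]
This vanishes because $\det J=0$, so $\Psi$ descends to a well-defined homomorphism $G\to\Z$. In other words, $\Psi$ is the composite $G\to\Gab\xrightarrow{\bar\psi}\Z$.

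Next, $\Psi(c)=\Psi(f^{l})=lm/\eta=\lcm(l,m)$, which is nonzero since $l,m\ge2$. For consistency we may also note that $\Psi(g^{m})=ml/\eta$ gives the same value. A homomorphism to $\Z$ sending $c$ to a nonzero integer forces $c^{k}\ne1$ for every $k\ne0$. Hence $c$ has infinite order and $\langle c\rangle\cong\Z$.

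There is no real obstacle here. The only point needing care is that well-definedness must be checked against the relators of $G$ itself, namely $W^{n}f^{-l}$ and $W^{n}g^{-m}$, and not against those of $T$. This is exactly the computation already carried out in Proposition~\ref{prop:Gab}(c).
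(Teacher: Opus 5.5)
Your proposal is correct and follows essentially the same route as the paper: you check that $\Psi$ kills the relators $W^{n}f^{-l}$ and $W^{n}g^{-m}$ of $G$ via the exponent-sum computation $-\det J/\eta=0$. You then read off $\Psi(c)=lm/\eta=\lcm(l,m)\ne 0$, which forces $c$ to have infinite order.
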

    
    \begin{proof}
        Since $\Psi(W)=\sigma_{f}\Psi(f)+\sigma_{g}\Psi(g)=(m\sigma_{f}+l\sigma_{g})/\eta$, it follows that both relators of $G$ are respected. Indeed, we have
        \[
            \Psi(W^{n}f^{-l}) = \Psi(W^{n}g^{-m}) = \frac{n(m\sigma_{f}+l\sigma_{g})-lm}{\eta} = -\frac{\det J}{\eta}=0,
        \]
        and $\Psi(c)=\Psi(f^{l})=lm/\eta=\lcm(l,m)\neq0$.
    \end{proof}

    Finally we record that the degenerate case $\det J=0$ occurs only under the hypothesis that $k=2$.

    \begin{lemma}\label{lem:detJk2}
        If $\det J=0$, then $s$ is odd and $k=2$.
    \end{lemma}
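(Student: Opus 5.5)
The argument is a direct case analysis on the parity of $s$ using the formula $\det J=lm-n(l\sigma_{g}+m\sigma_{f})$ from \eqref{eq:J}.

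\emph{Step 1: $s$ must be odd.} If $s$ is even, then $\sigma_{f}=\sigma_{g}=0$ by \S\ref{subsec:groups}, so $\det J=lm\ge 4>0$, as already noted in Proposition \ref{prop:Gab}(a). Hence $\det J=0$ forces $s$ to be odd.

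\emph{Step 2: the sign pattern.} Now let $s$ be odd, so $\sigma_{f},\sigma_{g}\in\{\pm1\}$ and $c_{1}=c_{2}=\frac1l+\frac1m+\frac1n-1$. Recall that $k=2$ is equivalent to $c_{1}\le 0$, that is, $lm+ln+mn\le lmn$. The equation $\det J=0$ reads
\[
lm=n(l\sigma_{g}+m\sigma_{f}).
\]
The left side is positive, so $l\sigma_{g}+m\sigma_{f}>0$. This rules out $\sigma_{f}=\sigma_{g}=-1$. In the remaining cases $l\sigma_{g}+m\sigma_{f}$ is one of $l+m$, $l-m$ or $m-l$.

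\emph{Step 3: each case gives $c_{1}\le 0$.}
\begin{itemize}
\item If $\sigma_{f}=\sigma_{g}=1$, then $lm=n(l+m)$. Dividing by $lmn$ gives $\frac1n=\frac1l+\frac1m$, so $c_{1}=\frac2n-1\le 0$ since $n\ge 2$.
\item If $\sigma_{g}=1$ and $\sigma_{f}=-1$, then $lm=n(l-m)$ with $l>m$. Dividing by $lmn$ gives $\frac1n=\frac1m-\frac1l$, so $\frac1m=\frac1l+\frac1n$ and $c_{1}=\frac2m-1\le 0$ since $m\ge 2$.
\item The case $\sigma_{g}=-1$, $\sigma_{f}=1$ is symmetric and gives $c_{1}=\frac2l-1\le0$.
\end{itemize}
In every case $c_{1}=c_{2}\le 0$, so $k=2$.

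No step is a real obstacle. The one point needing care is Step 2: the sign of $l\sigma_{g}+m\sigma_{f}$ has to be extracted from positivity of $lm$ before the equation is turned into a reciprocal identity.
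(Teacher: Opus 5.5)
Your proof is correct and follows essentially the same route as the paper. It rules out even $s$ via $\det J=lm$, discards $\sigma_{f}=\sigma_{g}=-1$ by a sign argument, and in the three remaining sign patterns divides by $lmn$ to get $\frac1l+\frac1m+\frac1n\in\{\frac2n,\frac2l,\frac2m\}$, each at most $1$.
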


    \begin{proof}
        For $s$ even, $\det J=lm\ne 0$ by Proposition \ref{prop:Gab}(a), so $s$ is odd and $\sigma_{f},\sigma_{g}\in\{\pm1\}$. Now $\det J=0$ is equivalent to $lm=n(l\sigma_{g}+m\sigma_{f})$, and the right-hand side is negative when $\sigma_{f}=\sigma_{g}=-1$. In the three remaining cases $(\sigma_{f},\sigma_{g})=(1,1),(1,-1),(-1,1)$, dividing by $lmn$ we obtain
        \[
          \frac{1}{n}=\frac{1}{l}+\frac{1}{m},\qquad \frac{1}{n}=\frac{1}{l}-\frac{1}{m},\qquad \frac{1}{n}=\frac{1}{m}-\frac{1}{l},
        \]
        respectively. It follows that $\frac{1}{l}+\frac{1}{m}+\frac{1}{n}$ equals $\frac{2}{n}$, $\frac{2}{l}$ or $\frac{2}{m}$ respectively, and each of these is at most $1$ because we are assuming that $l,m,n\ge2$, so that $c_{1}=c_{2}\le0$ and $k=2$.
    \end{proof}
    
    \section{The second homology group \texorpdfstring{$\Ho_{2}(T,\Q)$}{H2(T)}}\label{sec:H2}
    We compute the rational second homology of $T$ from the manifold $M_{0}$ of Lemma \ref{lem:core}. Section \ref{sec:central} then uses the result to show that the central element $c$ has infinite order in $G$.

    \begin{theorem}\label{thm:H2Q}
        Suppose $k=2$ and $s\ge 2$. Then $\Ho_{2}(T,\Q)\cong\Q$ and $\Ho_{2}(T,\Z)$ is finitely generated of rank one.
    \end{theorem}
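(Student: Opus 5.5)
The plan is to pass to the finite-index torsion-free normal subgroup $T_{0}$ of Lemma \ref{lem:core} and use transfer. With $\Pi=\Gamma/T_{0}$ finite, the Lyndon--Hochschild--Serre spectral sequence with rational coefficients collapses, giving $\Ho_{2}(T,\Q)\cong\Ho_{2}(T_{0},\Q)_{\Pi}$. Since $M_{0}=C/T_{0}$ is aspherical, this is $\Ho_{2}(M_{0},\Q)_{\Pi}$. Finite generation of $\Ho_{2}(T,\Z)$ is immediate from Corollary \ref{cor:Tvfl}, since a $\VFL$ group is of type $\FP_\infty$. So it suffices to show that the rational rank is one.

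To compute $\Ho_{2}(M_{0},\Q)_{\Pi}$ I will use the same transfer identification equivariantly at the orbifold level. Equivalently, I will work with the "rational homology" of the orbifold pair $(Q_{0},\partial Q_{0})$, which is computed by $\Pi$-coinvariants of the homology of $(M_{0},\partial M_{0})$.

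\textbf{Lefschetz duality.} Since $M_{0}$ is a compact orientable $3$-manifold and $\Pi$ acts orientation-preservingly (it lies in $\PSL(2,\C)$), duality is $\Pi$-equivariant. It gives
\[\Ho_{2}(M_{0},\Q)\cong\Ho^{1}(M_{0},\partial M_{0};\Q).\]
On coinvariants, which agree with invariants over $\Q$, this yields
\[\Ho_{2}(T,\Q)\cong\Ho^{1}(M_{0},\partial M_{0};\Q)^{\Pi}.\]

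\textbf{The long exact sequence.} Next I will use the long exact sequence of the pair, after taking $\Pi$-invariants, which is exact over $\Q$:
\[\Ho^{0}(M_{0})^{\Pi}\to\Ho^{0}(\partial M_{0})^{\Pi}\to\Ho^{1}(M_{0},\partial M_{0})^{\Pi}\to\Ho^{1}(M_{0})^{\Pi}.\]
The terms are identified as follows.
\begin{itemize}
\item $\Ho^{1}(M_{0},\Q)^{\Pi}=\Hom(T_{0},\Q)^{\Pi}=\Hom(T,\Q)=0$ by Proposition \ref{prop:Tab}.
\item $\Ho^{0}(M_{0})^{\Pi}=\Q$.
\item $\Ho^{0}(\partial M_{0})^{\Pi}$ is the space of $\Pi$-invariant functions on the set of boundary components. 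By Proposition \ref{prop:peripheral}, $\Pi$ acts transitively on the components lying over each $F_{i}$, so this space is $\Q^{k}=\Q^{2}$.
\end{itemize}
The exact sequence then gives $\dim\Ho_{2}(T,\Q)=2-1=1$, because the map $\Q\to\Q^{2}$ is the diagonal and hence injective.

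\textbf{Main obstacle.} I expect the hardest step to be justifying the equivariance: that Lefschetz duality commutes with the $\Pi$-action, and that rational $\Pi$-invariants commute with the connecting maps. Both follow from $\Pi$ being finite and acting by orientation-preserving homeomorphisms of the pair $(M_{0},\partial M_{0})$. Averaging over $\Pi$ makes invariants an exact functor over $\Q$.

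\textbf{Alternative route.} A cleaner option is "half lives, half dies." It gives
\[\dim\ker\bigl(\Ho_{1}(\partial M_{0},\Q)\to\Ho_{1}(M_{0},\Q)\bigr)=\tfrac12\dim\Ho_{1}(\partial M_{0},\Q).\]
I would keep this only as a check.
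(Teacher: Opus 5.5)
Your proposal is correct and follows essentially the same route as the paper: rational transfer to the manifold $M_{0}$, $\Pi$-equivariant Lefschetz duality, the long exact sequence of the pair $(M_{0},\partial M_{0})$ with $\Pi$-invariants (exact over $\Q$), and a count of the two $\Pi$-orbits of boundary components together with $\Hom(\Tab,\Q)=0$. The only difference is cosmetic: you get finite generation of $\Ho_{2}(T,\Z)$ from type $\VFL$ (hence $\FP_{\infty}$), whereas the paper uses that $T$ is finitely presented, so $\Ho_{2}(T,\Z)$ is a quotient of the second homology of a finite presentation complex.
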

    
    \begin{proof}
        As in Lemma \ref{lem:core}, let $T_{0}\trianglelefteq T$ be a torsion-free normal subgroup of finite index, and let $M_{0}$ be the associated manifold.
        
        For the finite group $\Pi=T/T_{0}$ and any $\Q\Pi$-module $V$, rational transfer gives $\Ho_{i}(\Pi,V)=0=\Ho^{i}(\Pi,V)$ for $i>0$ \cite[Ch.~III, Cor.~10.2]{brown}. The Lyndon--Hochschild--Serre spectral sequences of the extension $1\to T_{0}\to T\to\Pi\to1$ therefore collapse. Moreover, since $M_{0}$ is aspherical with fundamental group $T_{0}$, we obtain
        \[
          \Ho_{*}(T,\Q)\cong \Ho_{*}(M_{0},\Q)^{\Pi},
          \quad
          \Ho^{*}(T,\Q)\cong \Ho^{*}(M_{0},\Q)^{\Pi} ,
        \]
        where invariants and coinvariants agree because $|\Pi|$ is invertible in $\Q$. By Proposition \ref{prop:Tab} this gives
        \[
          \Ho^{1}(M_{0},\Q)^{\Pi}=\Ho^{1}(T,\Q)=\Hom(\Tab,\Q)=0.
        \]
        The group $\Pi$ acts on $M_{0}$ by orientation-preserving diffeomorphisms, since these are induced by elements of $\Gamma\le\PSL(2,\C)$. In particular $\Pi$ fixes the fundamental class, and the duality isomorphisms below are $\Q\Pi$-isomorphisms.
        
        Let $\mathcal B$ be the set of components of $\partial M_{0}$, which is non-empty and finite by Lemma \ref{lem:core}. Lefschetz duality, given by cap product with the fundamental class in $\Ho_{3}(M_{0},\partial M_{0},\Q)$, identifies $\Ho_{2}(M_{0},\Q)$ with $\Ho^{1}(M_{0},\partial M_{0},\Q)$ \cite[Thm.~3.43]{Hatcher}. We compute the latter from the long exact cohomology sequence of the pair $(M_{0},\partial M_{0})$ \cite[\S3.1]{Hatcher}, whose low-degree part reads
        \[
            \begin{tikzcd}[sep=1.2em, cramped]
    			0 \arrow[r] & \Ho^{0}(M_{0},\partial M_{0}) \arrow[r] & \Ho^{0}(M_{0}) \arrow[r, "j^{0}"] & \Ho^{0}(\partial M_{0}) \arrow[r, "\delta"] & \Ho^{1}(M_{0},\partial M_{0}) \arrow[r] & \Ho^{1}(M_{0}) \arrow[r, "j^{1}"] & \Ho^{1}(\partial M_{0}) \arrow[r] & \cdots
    	   \end{tikzcd}
        \]
        with $\Q$ coefficients throughout, where $j^{0}$ and $j^{1}$ are the restrictions induced by $\partial M_{0}\hookrightarrow M_{0}$. Since $M_{0}$ is connected we have $\Ho^{0}(M_{0},\Q)=\Q$, while $\Ho^{0}(\partial M_{0},\Q)=\Q[\mathcal B]$ is the permutation module on $\mathcal B$ and $j^{0}$ is the diagonal. Now a locally constant function on $M_{0}$ which vanishes on the non-empty boundary $\partial M_{0}$ must vanish identically, so $j^{0}$ is injective and $\Ho^{0}(M_{0},\partial M_{0},\Q)=0$. Then exactness identifies $\im(\delta)$ with $\coker(j^{0})\cong\Q[\mathcal B]/\Q$, and the image of $\Ho^{1}(M_{0},\partial M_{0},\Q)\to\Ho^{1}(M_{0},\Q)$ with $\ker(j^{1})$. It follows that the sequence reduces to
        \begin{equation*}\label{eq:pair-ses}
            \begin{tikzcd}
    			0 \arrow[r] & \Q[\mathcal B]/\Q \arrow[r] & \Ho^{1}(M_{0},\partial M_{0}, \Q) \arrow[r] & \ker(j^{1}) \arrow[r] & 0.
    	   \end{tikzcd}
        \end{equation*}
        By Maschke's theorem the functor $(-)^{\Pi}$ is exact on $\Q\Pi$-modules. Applying it, and using duality and the transfer isomorphism, we obtain
        \[
          \begin{tikzcd}
    			0 \arrow[r] & (\Q[\mathcal B]/\Q)^{\Pi} \arrow[r] & \Ho_{2}(T,\Q) \arrow[r] & (\ker(j^{1}))^{\Pi} \arrow[r] & 0.
    	   \end{tikzcd}
        \]
        The right-hand term vanishes, because $(\ker(j^{1}))^{\Pi}$ is contained in $\Ho^{1}(M_{0},\Q)^{\Pi}=\Hom(\Tab,\Q)=0$. For the left-hand term, exactness of $(-)^{\Pi}$ gives $(\Q[\mathcal B]/\Q)^{\Pi}=\Q[\mathcal B]^{\Pi}/\Q$, and $\dim_{\Q}\Q[\mathcal B]^{\Pi}$ is the number of $\Pi$-orbits in $\mathcal B$. As in the proof of Lemma \ref{lem:core}, $\Pi$ acts transitively on the components of the boundary $\partial M_{0}$ lying over $F_{i}$. Then since $\partial Q_{0}$ has $k=2$ components by \S\ref{subsec:structures}, there are two $\Pi$-orbits in $\mathcal B$ and we have
        \[
            \Ho_{2}(T,\Q)\cong\Ho_{2}(M_{0},\Q)^{\Pi}\cong\Ho^{1}(M_{0},\partial M_{0},\Q)^{\Pi}\cong(\Q[\mathcal B]/\Q)^{\Pi}=\Q[\mathcal B]^{\Pi}/\Q\cong\Q.
        \]
        Finally, $T$ is finitely presented. By \cite[Ch.~II, Thm.~5.2]{brown} the group $\Ho_{2}(T,\Z)$ is a quotient of the second homology of a finite presentation $2$-complex, and hence finitely generated. Since $\Q$ is flat over $\Z$ we have $\Ho_{2}(T,\Q)\cong \Ho_{2}(T,\Z)\otimes\Q$, which identifies the rank.
    \end{proof}
 
    \section{The central subgroup and \texorpdfstring{$\vcd(G)$}{vcd(G)}} \label{sec:central}
    
    We now show that $c$ has infinite order in $G$. The main tool is the five-term exact sequence of the central extension $1\to\langle c\rangle\to G\to T\to 1$ \cite[Ch.~VII, Cor.~6.4]{brown}, which reads
    \begin{equation}\label{eq:5term}
        \begin{tikzcd}
    			\Ho_2(G,\Z) \arrow[r, "\alpha"] & \Ho_{2}(T,\Z) \arrow[r, "\xi"] & \langle c\rangle \arrow[r, "\iota"] & \Gab \arrow[r] & \Tab \arrow[r] & 0.
    	\end{tikzcd}
    \end{equation}
    Since $c\in G$ is central, the action of $T$ on $\Ho_{1}(\langle c\rangle,\Z)\cong\langle c\rangle$ is trivial, and hence the third term is $\langle c\rangle$ itself rather than a proper quotient. Notice also that exactness at $\Gab$ recovers the identity $\Tab=\Gab/\langle\bar c\rangle$ of Section \ref{sec:ab}, since $\im(\iota)=\langle\bar c\rangle$.
    
    Let $X_{G}$ be the presentation $2$-complex of $G$, with one $0$-cell, two $1$-cells $f,g$ and two $2$-cells attached along the relators $\W^{n}f^{-l}$ and $\W^{n}g^{-m}$. In degree two the cellular boundary map $\Z^{2}\to\Z^{2}$ is the matrix of the exponent sums of these relators, i.e., $J^{\mathsf T}$, and since $X_{G}$ has no 3-cells, we have $\Ho_{2}(X_{G},\Z)=\ker(J^{\mathsf T})$. Moreover, a $K(G,1)$ is obtained from $X_{G}$ by attaching cells of dimension at least $3$, and this cannot create $2$-cycles. It then follows from \cite[Ch.~II, Thm.~5.2]{brown} that the map $\Ho_{2}(X_{G},\Z)\to\Ho_{2}(G,\Z)$ is surjective and we have
    \[
        \det J\ne 0\implies \Ho_{2}(G,\Z)=0\implies\alpha=0 \implies\xi \text{ injective}.
    \]
    If instead $\det J=0$, then $J$ has rank one, since its first row is non-zero. In this case $\Ho_{2}(X_{G},\Z)\cong\Z$ and $\Ho_{2}(G,\Z)$ is cyclic.
    
    \begin{theorem}\label{thm:central}
        Suppose $k=2$ and $s\ge 2$. Then $\langle c\rangle\cong\Z$ and $\Ho_{2}(T,\Z)\cong\Z$. More precisely, the following hold in the five-term exact sequence \eqref{eq:5term}.
        \begin{enumerate}[label=\textup{(\alph*)}]
            \item If $\det J\ne 0$, then $\Ho_{2}(G,\Z)=0$ and $\xi$ maps $\Ho_{2}(T,\Z)$ isomorphically onto the subgroup of index $\ord(\bar c)$ in $\langle c\rangle$. For $s$ even this subgroup is all of $\langle c\rangle$.
            \item If $\det J=0$, then $s$ is odd, $\xi=0$ and $\alpha$ is an isomorphism of infinite cyclic groups.
        \end{enumerate}
    \end{theorem}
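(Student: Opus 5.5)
The plan is to read everything off the five-term exact sequence \eqref{eq:5term}, using two inputs: from Theorem \ref{thm:H2Q}, that $\Ho_{2}(T,\Z)$ is finitely generated of rank one, and from the discussion preceding the statement, the dichotomy for $\Ho_{2}(G,\Z)$ according to $\det J$. We split into the two cases of the statement. In each case $\Ho_{2}(T,\Z)$ ends up squeezed against a cyclic group, and the rank-one information forces both it and $\langle c\rangle$ to be infinite cyclic.

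\emph{Case $\det J\ne 0$.} By the preceding discussion $\Ho_{2}(G,\Z)=0$, so $\alpha=0$ and $\xi$ is injective. Thus $\Ho_{2}(T,\Z)$ embeds in the cyclic group $\langle c\rangle$. Since $\Ho_{2}(T,\Z)$ has rank one, it is infinite, so $\langle c\rangle$ is infinite and hence $\langle c\rangle\cong\Z$. A nonzero subgroup of $\Z$ is infinite cyclic, so $\Ho_{2}(T,\Z)\cong\Z$.

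For the index, exactness at $\langle c\rangle$ gives $\im\xi=\ker\iota$. Hence
\[
\langle c\rangle/\im\xi\cong\im\iota=\langle\bar c\rangle ,
\]
and $\im\xi$ has index $\ord(\bar c)$, which is finite here since $\Gab$ is finite. For $s$ even, Proposition \ref{prop:Gab}(a) gives $\bar c=0$, so the index is $1$ and $\xi$ is onto.

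\emph{Case $\det J=0$.} Lemma \ref{lem:detJk2} gives that $s$ is odd, and Lemma \ref{lem:Psi} gives $\langle c\rangle\cong\Z$. By Proposition \ref{prop:Gab}(c) the class $\bar c$ has infinite order, so $\iota\colon\langle c\rangle\to\Gab$ is injective. Exactness then gives $\xi=0$, and so $\alpha$ is surjective.

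Next, the preceding discussion shows $\Ho_{2}(G,\Z)$ is cyclic. Its quotient $\Ho_{2}(T,\Z)$ has rank one, so $\Ho_{2}(G,\Z)$ must be infinite, i.e.\ $\Ho_{2}(G,\Z)\cong\Z$. A quotient of $\Z$ with rank one is $\Z$ itself, via the identity up to sign. Therefore $\alpha$ is an isomorphism and $\Ho_{2}(T,\Z)\cong\Z$.

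The only delicate point is in the first case: we cannot assume $c$ has infinite order in advance, so infiniteness must come from the rank-one conclusion of Theorem \ref{thm:H2Q}. Everything else is bookkeeping with exactness.
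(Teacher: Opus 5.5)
Your proposal is correct and follows the paper's proof essentially step for step. In case (a) you use $\Ho_{2}(G,\Z)=0$ to make $\xi$ injective, take infiniteness of $\langle c\rangle$ from the rank-one conclusion of Theorem \ref{thm:H2Q}, and read off the index from $\im\xi=\ker\iota$; in case (b) you use Lemma \ref{lem:Psi} and Proposition \ref{prop:Gab}(c) to force $\xi=0$, then upgrade the surjection $\alpha$ to an isomorphism using the cyclicity of $\Ho_{2}(G,\Z)$, with the only cosmetic difference being that you cite Lemma \ref{lem:detJk2} for the parity of $s$ where the paper cites Proposition \ref{prop:Gab}(a) directly.
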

    
    \begin{proof}
        Throughout, $\Ho_{2}(T,\Z)$ is finitely generated of rank one by Theorem \ref{thm:H2Q}.
        \begin{enumerate}[label=\textup{(\alph*)}]
            \item If $\det J\ne0$, then the discussion of $X_{G}$ above gives $\Ho_{2}(G,\Z)=0$, and thus $\alpha=0$ and $\xi$ is injective. The group $\langle c\rangle$ therefore contains a copy of the infinite group $\Ho_{2}(T,\Z)$, and hence $\langle c\rangle\cong\Z$. Now by Proposition \ref{prop:Gab}(a),(b), the group $\Gab$ is finite and the subgroup $\im(\iota)=\langle\bar c\rangle$ has order $\ord(\bar c)$, so that $\ker(\iota)$ has index $\ord(\bar c)$ in $\langle c\rangle$. Exactness at $\langle c\rangle$ then identifies $\Ho_{2}(T,\Z)$ with $\ker(\iota)$, which is a subgroup of finite index in $\Z$. For $s$ even, Proposition \ref{prop:Gab}(a) gives $\bar c=0$, so that $\ord(\bar c)=1$, and it follows that $\xi$ is surjective.
            
            \item Here $s$ is odd by Proposition \ref{prop:Gab}(a), and Lemma \ref{lem:Psi} gives $\langle c\rangle\cong\Z$ directly. The class $\bar c$ has infinite order in $\Gab$ by Proposition \ref{prop:Gab}(c), so that $\iota$ is injective. Exactness at $\langle c\rangle$ then forces $\im(\xi)=\ker(\iota)=0$, and exactness at $\Ho_{2}(T,\Z)$ implies that $\alpha$ is surjective. Finally, we saw above that $\Ho_{2}(G,\Z)$ is cyclic when $\det J=0$ and, since $\Ho_{2}(T,\Z)$ has rank one, it follows that the surjection $\alpha$ is an isomorphism of infinite cyclic groups.
            \qedhere
        \end{enumerate}
    \end{proof}
    
    \begin{theorem}\label{thm:vcdG}
        Suppose $k=2$ and $s\ge 2$. Then $\vcd(G)=3$.
    \end{theorem}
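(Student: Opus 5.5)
Since $\langle c\rangle\cong\Z$ by Theorem \ref{thm:central}, we have a central extension $1\to\Z\to G\to T\to 1$, and I would compute $\vcd(G)$ from $\vcd(T)=2$ (Theorem \ref{thm:vcdT}). First I produce a torsion-free finite-index subgroup of $G$. Let $T_{0}\trianglelefteq T$ be torsion-free of finite index as in Lemma \ref{lem:core}, and let $G_{0}$ be its preimage in $G$. Then $G_{0}$ has finite index in $G$ and sits in a central extension $1\to\langle c\rangle\to G_{0}\to T_{0}\to 1$. Any torsion element of $G_{0}$ maps to a torsion element of $T_{0}$, hence to $1$, so it lies in $\langle c\rangle\cong\Z$ and is therefore trivial. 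Thus $G_{0}$ is torsion-free, and $G$ is virtually torsion-free.

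For the upper bound I use subadditivity of cohomological dimension in extensions \cite[Ch.~VIII, Prop.~2.4(b)]{brown}:
\[
\cd(G_{0})\le\cd(\Z)+\cd(T_{0})=1+2=3.
\]
Finiteness properties should come along as well. $T_{0}$ is of type $\FL$ by Corollary \ref{cor:Tvfl}, since it is the fundamental group of the compact aspherical manifold $M_{0}$, and $\Z$ is of type $\FL$. Extensions of $\FP$ groups are $\FP$, so $G_{0}$ is of type $\FP$. This matters if one wants $\vcd$ interpreted via type $\VFL$ as in \S\ref{subsec:finiteness}. For full $\FL$ one can either use that extensions of $\FL$ by $\FL$ are $\FL$ (build a finite free resolution by the standard twisted tensor product/cellular construction, e.g. a $K(G_{0},1)$ fibring over $M_{0}$ with circle fibre), or simply pass to a further finite-index subgroup. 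I would cite \cite[Ch.~VIII]{brown} for this.

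For the lower bound I expect the main obstacle. Take a peripheral subgroup $S_{i}\cap T_{0}$, which by the proof of Lemma \ref{lem:core} is the fundamental group of a closed orientable surface $\Sigma$ of genus $\ge1$. Its preimage $H\le G_{0}$ is a central extension of $\pi_{1}(\Sigma)$ by $\Z$. Such a group is the fundamental group of a closed orientable circle bundle over $\Sigma$, which is an aspherical closed $3$-manifold since the base is aspherical. Hence $\cd(H)=3$, because $\Ho^{3}(H,\Z)\cong\Z\ne0$ by Poincar\'e duality. Alternatively, and more robustly, I would use the LHS spectral sequence with the Gysin sequence: the top term $E_{2}^{2,1}=\Ho^{2}(\pi_{1}\Sigma,\Ho^{1}(\Z,\Z))\cong\Z$ survives, since there is no differential out of or into it in total degree $3$ (the $E_{2}$ page has only rows $0,1$ and columns $0,1,2$). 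This gives $\Ho^{3}(H,\Z)\ne 0$ regardless of the Euler class. Then $\cd(G_{0})\ge\cd(H)=3$ by \cite[Ch.~VIII, Prop.~2.4(a)]{brown}. Actually, the same spectral sequence argument applied directly to $G_{0}\to T_{0}$ would need $\Ho^{2}(T_{0},\Z)$ to detect something, which is less clear because $M_{0}$ has boundary. This is why I route the lower bound through the closed surface subgroup. Combining the two bounds gives $\vcd(G)=\cd(G_{0})=3$.
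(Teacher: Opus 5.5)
Your proof is correct. The torsion-freeness of $G_{0}$ and the upper bound $\cd(G_{0})\le\cd(\Z)+\cd(T_{0})=3$ are exactly as in the paper. The lower bound takes a genuinely different route.

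\textbf{The paper's lower bound.} The paper stays with the full extension $1\to\langle c\rangle\to G_{0}\to T_{0}\to 1$, but it takes coefficients in $\Z G_{0}$ rather than $\Z$. Since $\Z G_{0}$ is free over $\Z\langle c\rangle$, the LHS spectral sequence collapses onto the row $q=1$. This gives $\Ho^{3}(G_{0},\Z G_{0})\cong\Ho^{2}(T_{0},\Z T_{0})$, which is nonzero because $T_{0}$ is of type $\FL$ with $\cd(T_{0})=2$ \cite[Ch.~VIII, Prop.~6.7]{brown}.

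\textbf{Your lower bound.} You restrict to the preimage $H$ of a closed orientable surface subgroup $S_{i}\cap T_{0}$ and use trivial coefficients. There $E_{2}^{2,1}=\Ho^{2}(\pi_{1}\Sigma,\Z)\cong\Z$ is isolated; the action on $\Ho^{1}(\langle c\rangle,\Z)$ is trivial because $c$ is central. You then conclude by monotonicity of $\cd$.

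\textbf{Comparison.} Your argument is more elementary: it needs only trivial coefficients and the closed-surface case. It mirrors the paper's own lower bound for $\vcd(T)$ in Theorem~\ref{thm:vcdT}, but it depends on the peripheral structure from Lemma~\ref{lem:core}. The paper's argument is intrinsic to the extension. It would apply verbatim to any central $\Z$-extension of a group of type $\FP$ with cohomological dimension two.

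\textbf{Your worry about $\partial M_{0}$.} The group-ring coefficients are how the paper sidesteps your concern about using $\Ho^{2}(T_{0},\Z)$ when $\partial M_{0}\neq\varnothing$. That direct route would in fact also work here. Since $\partial M_{0}$ has at least two components, Lefschetz duality and the exact sequence of the pair give a surjection $\Ho^{2}(M_{0},\Z)\cong\Ho_{1}(M_{0},\partial M_{0})\to\ker\bigl(\Ho_{0}(\partial M_{0})\to\Ho_{0}(M_{0})\bigr)\neq 0$.

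\textbf{Finiteness remarks.} Your side remarks on finiteness properties are not needed for the statement. The paper proves that $G_{0}$ is of type $\FL$ separately in Corollary~\ref{cor:FP}, using the same circle bundle over $M_{0}$ that you mention. However, ``passing to a further finite-index subgroup'' is not by itself a valid way to upgrade $\FP$ to $\FL$.
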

    
    \begin{proof}
        Let $T_{0}\le T$ be torsion-free of finite index with $\cd(T_{0})=2$ as in Theorem \ref{thm:vcdT}, and let $G_{0}\le G$ be its preimage. Theorem \ref{thm:central} gives a central extension $1\to\Z\to G_{0}\to T_{0}\to1$ with $\Z=\langle c\rangle$. A torsion element of $G_{0}$ maps to a torsion element of $T_{0}$, which is trivial. Then such an element lies in $\langle c\rangle\cong\Z$, and hence $G_{0}$ is torsion-free. By \cite[Ch.~VIII, Prop.~2.4]{brown} we have the bound $\cd(G_{0})\le\cd(\Z)+\cd(T_{0})=3$. 
        
        For the reverse inequality we use the Lyndon--Hochschild--Serre spectral sequence of the central extension $1\to\langle c\rangle\to G_{0}\to T_{0}\to1$ with coefficients $\ZG_{0}$, which is free as a $\Z\langle c\rangle$-module. Then $\Ho^{0}(\langle c\rangle,\Z G_{0})=0$ and $\Ho^{1}(\langle c\rangle,\Z G_{0})\cong\Z T_{0}$, so that $E_{2}^{p,0}=0$, $E_{2}^{p,1}=\Ho^{p}(T_{0},\Z T_{0})$ and
        \begin{equation*}\label{eq:shift}
            \Ho^{i}(G_{0},\ZG_{0})\cong \Ho^{i-1}(T_{0},\ZT_{0}) \qquad(i\ge 1).
        \end{equation*}
        In particular $\Ho^{3}(G_{0},\ZG_{0})\cong\Ho^{2}(T_{0},\ZT_{0})$, which is non-zero because $T_{0}$ is of type $\FL$ by Corollary \ref{cor:Tvfl} with $\cd(T_{0})=2$ \cite[Ch.~VIII, Prop.~6.7]{brown}. Hence $\cd(G_{0})=3$.
    \end{proof}

    The hypothesis $k=2$ is also necessary for the conclusion $\langle c\rangle\cong\Z$, as the following remark shows.
    
    \begin{remark}\label{rem:badcases}
        Suppose that $k\le1$, and that $Q_{0}$ is hyperbolic or $T$ is finite. Then $\det J\ne0$ by Lemma \ref{lem:detJk2}, and $\Gab$ is finite by Proposition \ref{prop:Gab}. We claim that $\Ho_{2}(T,\Q)=0$ as well. Firstly, this is clear when $T$ is finite. Now if $Q_{0}$ is hyperbolic, then the constructions of \S\ref{subsec:structures} and Lemma \ref{lem:core} apply unchanged, since they only use the fact that $Q^{\times}$ is hyperbolic. For $k=0$ the manifold $M_{0}$ is closed, and the transfer argument of Theorem \ref{thm:H2Q} together with Poincar\'e duality gives $\Ho_{2}(T,\Q)\cong\Ho^{1}(T,\Q)=0$. For $k=1$ the proof of Theorem \ref{thm:H2Q} goes through with a single boundary component, and it gives $\Ho_{2}(T,\Q)=0$. Hence in all cases the finitely generated group $\Ho_{2}(T,\Z)$ is finite, and the five-term exact sequence \eqref{eq:5term} shows that $\langle c\rangle$ is finite.
        
        If $\langle c\rangle$ has finite order $d\neq 1$, then $(1+c+\dots+c^{d-1})(c-1)=0$, and hence $c-1$ is a zero-divisor in $\ZG$. Moreover $f^{ld}=c^{d}=1$, so that $G$ has torsion and $\cd(G)=\infty$ \cite[Ch.~VIII, Cor.~2.5]{brown}. As a result, neither of the resolutions of Section \ref{sec:freeres} is available in this case. However, in the case of the classical word $W=fg$, resolutions for both the triangle group and its central extensions have been constructed by Strebel \cite{streb}.
    \end{remark}
    
    \section{Cohomological dimension of \texorpdfstring{$G$}{G}} \label{sec:cdG}
    By Theorem \ref{thm:vcdG} we have $\vcd(G)=3$, and hence $\cd(G)=3$ provided that $G$ is torsion-free. To prove this we first locate the torsion of $T$.
    
    The elements $f$, $g$ and $\W$ have orders $l$, $m$ and $n$ in $T$, as Jones and Reid record \cite[p.~249]{jonesreid}. This can also be read off from Section \ref{sec:orbifold}. Indeed, the local groups $\Z/l$, $\Z/m$ and $\Z/n$ along the edges of the singular locus are generated by conjugates of $f$, $g$ and $\W$ (\S\ref{subsec:graph}), and they inject into $\Gamma\cong T$ because $\HH^{3}/\Gamma$ is good (\S\ref{subsec:orbifolds}).

    \begin{lemma}\label{lem:torsion-class}
        Suppose $k=2$ and $s\ge 2$. Then every finite subgroup of $T$ is conjugate to a subgroup of $\langle f\rangle\cong\Z/l$, $\langle g\rangle\cong\Z/m$ or $\langle\W\rangle\cong\Z/n$. In particular, every torsion element of $T$ is conjugate to a power of $f$, $g$ or $\W$.
    \end{lemma}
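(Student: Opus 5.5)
We argue geometrically, through the faithful, properly discontinuous action of $\Gamma\cong T$ on $\HH^{3}$ described in \S\ref{subsec:structures}, and then transfer the conclusion to the $\Gamma$-invariant contractible set $C$ of Theorem \ref{thm:geometry}.

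\emph{Step 1: a fixed point.} Let $K\le\Gamma$ be a finite subgroup. A finite group of isometries of $\HH^{3}$ has a fixed point; for instance the barycentre (centre of mass) of any $K$-orbit is fixed, by uniqueness of barycentres in a CAT$(-1)$ space. Choose such a fixed point $y$. Then $K\le\Gamma_{y}$, and $\Gamma_{y}$ is the local group of $Q^{\times}=\HH^{3}/\Gamma$ at $\pi(y)$.

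\emph{Step 2: moving the point into $C$.} If $\pi(y)\notin Q_{0}$, then by Theorem \ref{thm:geometry}\ref{geo:prod} we have $\pi(y)=(a,t)$ in an end $F_{i}\times(0,\infty)$. The deformation retraction $\Theta$ from the proof of Theorem \ref{thm:geometry} is built from the lifted product structure $\Phi$. Since $\Phi$ is a diffeomorphism over $U_{i}$ and the $\Gamma$-action on $P$ is by deck transformations preserving fibres, $\Theta$ is $\Gamma$-equivariant. Hence $\Theta_{1}(y)\in\Sigma\subseteq C$ is again fixed by $K$. So we may assume $y\in C$, that is, $\pi(y)\in Q_{0}$.

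\emph{Step 3: identifying the local groups.} It remains to list the local groups of $Q_{0}$, i.e.\ of $Q^{\times}$ at points of $Q_{0}$. Since $k=2$, the singular locus $(L\cup\tau)\cap Q_{0}$ has no vertices (\S\ref{subsec:structures}). It therefore consists of open edges and arcs meeting $\partial Q_{0}$ transversely, and every nontrivial local group is cyclic, equal to the edge group $\Z/l$, $\Z/m$ or $\Z/n$. By \S\ref{subsec:graph} these edge groups are generated, up to conjugacy in $\pi_{1}^{\orb}$, by $f$, $g$ and $\W$. The main point to justify carefully is the following. The stabiliser $\Gamma_{y}$ at a lift $y$ of a point on the edge labelled $l$ is a conjugate of $\langle f\rangle$ in $\Gamma$, and not merely abstractly isomorphic to $\Z/l$. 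To show this, I would use the Jones--Reid construction: the meridian loop around the $f$-handle core represents $f$, and the deck transformation associated to a small meridian around an edge fixes the lift of that edge pointwise. This lift lies in the component of the fixed set of the corresponding conjugate, so $\Gamma_{y}$ contains, and by order considerations equals, $\gamma\langle f\rangle\gamma^{-1}$. The cases of $g$ and $\W$ are handled in the same way.

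\emph{Conclusion.} Combining the steps, $K\le\Gamma_{y}=\gamma\langle x\rangle\gamma^{-1}$ with $x\in\{f,g,\W\}$. Since $\rho_{0}$ is faithful, this transfers back to $T$, and the orders $l$, $m$, $n$ are those recorded just before the lemma. The statement about torsion elements follows by applying the result to the cyclic subgroup generated by the element.

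I expect the main obstacle to be Step 3, namely making precise the identification of point stabilisers with conjugates of the specific subgroups $\langle f\rangle$, $\langle g\rangle$, $\langle\W\rangle$. The equivariance of $\Theta$ in Step 2 is routine by comparison.
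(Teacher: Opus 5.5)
Your proposal is correct and follows essentially the paper's route: a finite subgroup fixes a point of $\HH^{3}$ (you use the barycentre, the paper uses the circumcentre), so it lies in a point stabiliser. Since $k=2$ the singular locus of $Q_{0}$ has no vertices, so these stabilisers are trivial or edge groups, namely conjugates of $\langle f\rangle$, $\langle g\rangle$ or $\langle\W\rangle$.

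There are two differences. First, you push the fixed point into $C$ with the retraction $\Theta$ rather than reading the local groups of the ends off Theorem~\ref{thm:geometry}\ref{geo:prod}. Your justification of equivariance is terse but the claim holds for any choice of $\Phi$, since $\Phi\gamma\Phi^{-1}$ is a covering automorphism of $\Sigma\times[0,\infty)$, hence of the form $h\times\mathrm{id}$, which commutes with the shrinking. Second, you spell out the meridian argument identifying stabilisers with conjugates of these specific subgroups, whereas the paper takes this from \S\ref{subsec:graph} and the injectivity of local groups.
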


    \begin{proof}
        Firstly, identify $T$ with its image $\Gamma=\rho_{0}(T)\le\PSL(2,\C)$. Then since $\Gamma$ acts properly discontinuously on $\HH^{3}$, every point stabiliser is finite. On the other hand, a finite subgroup $K\le\Gamma$ fixes the circumcentre of any of its orbits \cite[Ch.~II, Cor.~2.8]{BridsonHaefliger}, and hence lies in a point stabiliser. It follows that the finite subgroups of $\Gamma$ are the subgroups of the point stabilisers, which are the local groups of $\HH^{3}/\Gamma$.

        Now by Theorem \ref{thm:geometry}\ref{geo:prod} the orbifold $\HH^{3}/\Gamma$ is $Q_{0}$ with a product end $F\times(0,\infty)$ attached along each boundary turnover $F$. The local groups along such an end are those of $F$, which are the local groups of the three branches of $L\cup\tau$ meeting $F$ (\S\ref{subsec:turnovers}). Since the singular locus of $Q_{0}$ has no vertices when $k=2$ (\S\ref{subsec:structures}), the local groups of $Q_{0}$ are trivial or cyclic edge groups. Hence every local group of $\HH^{3}/\Gamma$ is conjugate into $\langle f\rangle$, $\langle g\rangle$ or $\langle\W\rangle$, of orders $l$, $m$ and $n$. Finally, a torsion element lies in a finite cyclic subgroup, which gives the last assertion.
    \end{proof}
    
    \begin{proposition}\label{prop:G-torsionfree}
        Suppose $k=2$ and $s\ge 2$. Then $G$ is torsion-free and $\cd(G)=3$.
    \end{proposition}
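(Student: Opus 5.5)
The plan is to reduce torsion in $G$ to torsion in $T$ via the central extension $1\to\langle c\rangle\to G\to T\to 1$, and then use that $\langle c\rangle\cong\Z$ (Theorem \ref{thm:central}) to show that every lift of a torsion element of $T$ has infinite order unless it is trivial.

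Let $x\in G$ have finite order, and let $\bar x$ be its image in $T$. Then $\bar x$ is a torsion element of $T$. By Lemma \ref{lem:torsion-class} it is conjugate to a power of $f$, $g$ or $\W$. Conjugating $x$ in $G$ by a lift of the conjugating element, I may assume that $\bar x$ is $f^{a}$, $g^{a}$ or $\W^{a}$ for some integer $a$. Since $\langle c\rangle$ is the kernel, $x=f^{a}c^{j}$ (respectively $g^{a}c^{j}$ or $\W^{a}c^{j}$) for some $j\in\Z$. The three cases are formally identical, because in $G$ we have $f^{l}=g^{m}=\W^{n}=c$. So I treat $y\in\{f,g,\W\}$ with $y^{p}=c$, where $p\in\{l,m,n\}$.

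Put $d=\gcd(a,p)$ and $t=p/d$. Since $c$ is central,
\[
x^{t}=y^{at}c^{jt}=\bigl(y^{p}\bigr)^{a/d}c^{jt}=c^{(a+jp)/d}.
\]
Because $x$ has finite order, so does $x^{t}\in\langle c\rangle$. But $\langle c\rangle\cong\Z$ by Theorem \ref{thm:central}, which forces $a+jp=0$. Then $x=y^{a}c^{j}=y^{a}y^{jp}=y^{a+jp}=1$, so $G$ is torsion-free. The only subtle point is to check that the normal form $x=y^{a}c^{j}$ is legitimate after conjugation. This holds because conjugation preserves finite order and the fibre over $y^{a}$ is exactly $y^{a}\langle c\rangle$.

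For the dimension, $G$ is virtually torsion-free with $\vcd(G)=3$ by Theorem \ref{thm:vcdG}. Since $G$ is itself torsion-free, Serre's theorem (independence of $\vcd$ from the chosen finite-index torsion-free subgroup, \cite[Prop.~5]{serredisc}, applied with $G$ itself) gives $\cd(G)=\vcd(G)=3$. I expect no step to be a serious obstacle: the substantive input, that $c$ has infinite order, is already Theorem \ref{thm:central}.
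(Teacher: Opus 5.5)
Your proof is correct and follows essentially the same route as the paper: you conjugate to $x=y^{a}c^{j}$ using Lemma~\ref{lem:torsion-class}, use that $c$ has infinite order (Theorem~\ref{thm:central}), and finish with Serre's theorem to get $\cd(G)=\vcd(G)=3$. The only difference is cosmetic: you raise $x$ to the power $p/\gcd(a,p)$ to land in $\langle c\rangle$ and conclude $x=1$ directly, whereas the paper raises $x$ to its order $N$ and contradicts $0<j<l$ using $[\langle f\rangle:\langle c\rangle]=l$, so your version also absorbs the case of trivial image in $T$ without a separate argument.
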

    
    \begin{proof}
        By Theorem \ref{thm:central} we have $\langle c\rangle\cong\Z$. Then since $f^{l}=c$ has infinite order, so does $f$, and $\langle f\rangle\cong\Z$ contains $\langle c\rangle$ with index $l$. A power $f^{j}$ therefore lies in $\langle c\rangle$ if and only if $l\mid j$, and the same applies to $g$ with $m$ and to $\W$ with $n$.

        Let $a\in G$ be a torsion element, say with $a^{N}=1$ for some $N\ge1$. If its image in $T$ is trivial, then $a\in\langle c\rangle\cong\Z$ and hence $a=1$. Otherwise the image is a non-trivial torsion element of $T$. By Lemma \ref{lem:torsion-class} it is conjugate to a power $f^{j}$ with $0<j<l$, or to a similar power of $g$ or $\W$, which are treated in the same way. Lifting the conjugating element and replacing $a$ by its conjugate, we may assume that $a=f^{j}c^{t}$ for some $t\in\Z$, since $\langle c\rangle$ is the kernel of $G\to T$. Since $c$ is central, we have $1=a^{N}=f^{jN}c^{tN}$, and hence $f^{jN}$ lies in $\langle c\rangle$. This forces $l\mid jN$ and $f^{jN}=c^{jN/l}$. Now comparing exponents in the infinite cyclic group $\langle c\rangle$ we obtain $jN/l=-tN$, that is, $j=-tl$, which contradicts $0<j<l$. Hence $G$ is torsion-free.

        Finally, $\vcd(G)=3<\infty$ by Theorem \ref{thm:vcdG}, so $\cd(G)=\vcd(G)$ by Serre's theorem \cite[Ch.~VIII, Thm.~3.1]{brown}.
    \end{proof}
    
    \begin{corollary}\label{cor:FP}
        Suppose $k=2$ and $s\ge 2$. Then $G$ is of type $\FP$.
    \end{corollary}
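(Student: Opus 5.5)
The plan is to deduce type $\FP$ from two facts: $G$ is torsion-free with $\cd(G)=3<\infty$ (Proposition \ref{prop:G-torsionfree}), and $G$ is of type $\VFL$, hence of type $\FP_\infty$. Since a group of finite cohomological dimension is of type $\FP$ exactly when it is of type $\FP_\infty$ \cite[Ch.~VIII, Prop.~6.1]{brown}, these two facts suffice. Moreover, $\FP_\infty$ passes from a finite-index subgroup to the whole group \cite[Ch.~VIII, Prop.~5.1]{brown}. So it suffices to exhibit one finite-index subgroup of $G$ of type $\FP$.

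For that subgroup, take $G_{0}\le G$ as in the proof of Theorem \ref{thm:vcdG}. It is the preimage of a torsion-free normal subgroup $T_{0}\le T$ of finite index. By Theorem \ref{thm:central} there is a central extension $1\to\langle c\rangle\to G_{0}\to T_{0}\to1$ with $\langle c\rangle\cong\Z$. Here $T_{0}=\pi_{1}(M_{0})$ for the compact aspherical $3$-manifold $M_{0}$ of Lemma \ref{lem:core}. A compact manifold has the homotopy type of a finite CW complex, so $T_{0}$ has a finite $K(T_{0},1)$ and is of type $\FL$. The group $\Z$ is of type $\FL$ via the circle. An extension of a group of type $\FP$ by a group of type $\FP$ is again of type $\FP$ \cite[Ch.~VIII, Prop.~6.6 / Ex.~3 of \S6]{brown}. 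Hence $G_{0}$ is of type $\FP$, with $[G:G_{0}]=[T:T_{0}]<\infty$.

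A more geometric alternative avoids the extension lemma. Form the circle bundle over the finite complex $K(T_{0},1)$ classified by the extension class, namely the pullback of the Euler class. This gives a finite $K(G_{0},1)$ directly: a principal $S^{1}$-bundle over a finite aspherical complex whose fibre injects on $\pi_{1}$ is aspherical with fundamental group $G_{0}$. Either route gives $G_{0}$ of type $\FL$.

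Finally, $G$ contains $G_{0}$ of finite index, so $G$ is of type $\FP_\infty$. Since $\cd(G)=3$ by Proposition \ref{prop:G-torsionfree}, $G$ is of type $\FP$. I expect no serious obstacle here. The only point needing care is to cite the correct finiteness results from \cite{brown}: closure of $\FP$ under extensions, and the finite-index criterion combined with finite cohomological dimension. The torsion-freeness input, which is the substantive part, was already done in Proposition \ref{prop:G-torsionfree}.
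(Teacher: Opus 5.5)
Your argument is correct, and its skeleton matches the paper's. You pass to the preimage $G_{0}$ of the torsion-free finite-index subgroup $T_{0}$ of Lemma \ref{lem:core}, get finiteness for $G_{0}$ from the central extension $1\to\langle c\rangle\to G_{0}\to T_{0}\to 1$, and climb back to $G$ using torsion-freeness. Your combination of \cite[Ch.~VIII, Prop.~5.1]{brown} with $\cd(G)=3<\infty$ is just an unpacking of the result \cite[Ch.~VIII, Prop.~6.6]{brown} that the paper cites for that last step.

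The only real difference is the middle step. The paper's proof is exactly what you offer as your alternative. It takes the principal $S^{1}$-bundle $E\to M_{0}$ with Euler class equal to the extension class $e\in\Ho^{2}(T_{0},\Z)$ and gets asphericity from the homotopy exact sequence. It then identifies $\pi_{1}(E)\cong G_{0}$ because the induced central extension has class $\pm e$. This produces a compact aspherical $K(G_{0},1)$, so $G_{0}$ is of type $\FL$ and even has a finite classifying space.

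Your main route instead uses closure of type $\FP$ under extensions. It is purely algebraic and avoids the one point the paper takes as standard, namely that the extension class of $\pi_{1}(E)$ agrees with that of $G_{0}$. The price is that it yields only $\FP$ for $G_{0}$ (really $\FP_{\infty}$, which is all you use). So your sentence ``either route gives $G_{0}$ of type $\FL$'' overstates what the extension lemma, as you quoted it, delivers.

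Two small fixes:
\begin{itemize}
\item The bundle should be described as the pullback of the universal circle bundle along a map classifying the extension class, not ``the pullback of the Euler class''.
\item The paper cites \cite[Ch.~VIII, Prop.~6.6]{brown} for the finite-index step, so your reference for extension-closure of $\FP$ should be rechecked.
\end{itemize}
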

    
    \begin{proof}
        Let $T_{0}$ and $M_{0}$ be as in Lemma \ref{lem:core} and let $G_{0}\le G$ be the preimage of $T_{0}$. As in Theorem \ref{thm:vcdG} there is a central extension $1\to\Z\to G_{0}\to T_{0}\to 1$, which is classified by some $e\in\Ho^{2}(T_{0},\Z)=\Ho^{2}(M_{0},\Z)$. Now let $E\to M_{0}$ be the principal $S^{1}$-bundle with Euler class $e$. Then since $\pi_{i}(M_{0})=0$ for $i\ge2$, the long exact homotopy sequence gives $\pi_{i}(E)=0$ for $i\ge2$ and implies that $\pi_{1}(S^{1})\to\pi_{1}(E)$ is injective. Hence $\pi_{1}(E)$ is a central extension of $T_{0}$ by $\Z$ with extension class $\pm e$, and thus $\pi_{1}(E)\cong G_{0}$. In particular, $E$ is a compact aspherical manifold and $G_{0}$ is of type $\FL$. Finally, $G_{0}\le G$ is a finite-index subgroup and, since $G$ is torsion-free by Proposition \ref{prop:G-torsionfree}, it follows that $G$ is of type $\FP$ \cite[Ch.~VIII, Prop.~6.6]{brown}.
    \end{proof}
    
   \section{Free resolutions}\label{sec:freeres}
    By Proposition \ref{prop:G-torsionfree} and Corollary \ref{cor:FP} the group $G$ is of type $\FP$ with $\cd(G)=3$, so the trivial module $\Z\in\ZGmod$ admits a projective resolution of length three. In this section we compute a \emph{free} resolution of length three explicitly, showing that $G$ is in fact of type $\FL$. This resolution is then used in \S\ref{sec:Tres} to produce an eventually $2$-periodic free resolution of $\Z$ over $\ZT$, by the construction of the first author \cite{seanyc}, and in Section \ref{sec:homology} to compute the integral homology and cohomology of both groups. Throughout Sections \ref{sec:freeres} and \ref{sec:homology} we assume $k=2$ and $s\ge 2$.
    
    We follow the notational conventions of \cite[\S2]{seanyc}. In particular, all modules are \emph{left} modules so that elements of a free module $\ZG^{p}$ are \emph{row} vectors and homomorphisms $\delta\colon\ZG^{p}\to\ZG^{q}$ are represented by a $p\times q$ matrix acting on the \emph{right}. Moreover, composition of homomorphisms is written left to right, so that $\delta d$ means ``first $\delta$, then $d$'', that is the usual matrix product $\delta\cdot d$. For $x\in G$ and $j\ge0$ we define the \emph{norm element} $\Ngk{x}{j}\coloneqq\sum_{i=0}^{j}x^{i}\in\ZG$, and abbreviate
    \[\Nf\coloneqq \Ngk{f}{l-1},\qquad \Ng\coloneqq \Ngk{g}{m-1},\qquad \Nw\coloneqq \Ngk{W}{n-1}.\]
    The augmentation map will always be denoted $\varepsilon\colon\ZG\to\Z$ (and $\ZT\to\Z$), and $\overline{(-)}$ will denote reduction along $\ZG\to\ZT=\ZG/(c-1)$. The same symbols $f,g,W,\Nf,\dots$ are used for the images of these elements in $\ZT$.
        
    Let $\mathcal{F}$ be the free group on the generators $f,g$ and $\om\colon\mathcal{F}\to G$ the canonical surjection. We write $D_{f},D_{g}\colon \mathcal{F}\to\ZG$ for the compositions of the \emph{Fox derivatives} $\mathcal{F}\to\Z\mathcal{F}$ with the map $\Z\mathcal{F}\to\ZG$ induced by $\om$. These satisfy $D_{x}(uv)=D_x(u)+\omega(u)D_{x}(v)$, $D_x(x^{-1})=-\om (x^{-1})$, and for any $y\in\mathcal{F}$ the \emph{fundamental formula} from \cite[\S2]{fox} is 
    \begin{equation}\label{Eq:FunForm}
        \omega(y)-1=D_{f}(y)(f-1)+D_{g}(y)(g-1).
    \end{equation}
    For the Farey word $W=\W$, we set
    \[\wf\coloneqq D_{f}(W),\qquad \wg\coloneqq D_{g}(W).\]
    
    We now recall the construction due to Lyndon which gives the first three terms of a free resolution for the trivial module over a finitely presented group \cite[\S5]{lyndon}.
    \begin{theorem}[Lyndon 1950]\label{Thm:preres}
    	Suppose $A=\langle x_{1},\ldots, x_{p} \mid  r_{1},\ldots, r_{q}\rangle$ is a finitely presented group. Then the trivial module $\Z\in\ZAmod$ has a free resolution beginning with
        \begin{equation*}\label{Eq:zres}
            \begin{tikzcd}
        		\Z A^{q} \arrow[r,"d_{2}"] & \Z A^{p} \arrow[r, "d_{1}"] & \Z A \arrow[r, "\varepsilon"] & \Z \arrow[r] & 0,
        	\end{tikzcd}
        \end{equation*}
        where $d_{1}\colon\Z A^{p}\to \Z A$ is the $p\times1$ matrix and $d_{2}\colon\Z A^{q}\to\Z A^{p}$ the $q\times p$ matrix
        \[d_{1}=\begin{bmatrix}
            x_{1}-1 \\
            \vdots \\
            x_{p}-1
        \end{bmatrix},\qquad d_{2} = \bigl[\,D_{x_{i}}(r_{j})\,\bigr]_{\substack{1\le j\le q\ (\text{rows})\\ 1\le i\le p\ (\text{columns})}} ,\]
        so that the rows of $d_{2}$ are indexed by the relators and its columns by the generators.
    \end{theorem}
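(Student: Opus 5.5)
The plan is to build the standard partial free resolution from the presentation complex and check exactness at each term, in the order $\Z$, $\Z A$, $\Z A^{p}$.

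\textbf{Complex property.} First we check that the sequence is a complex. The composite $d_{1}\varepsilon$ vanishes because $\varepsilon(x_{i}-1)=0$. For $d_{2}d_{1}$, the $j$-th entry is $\sum_{i}D_{x_{i}}(r_{j})(x_{i}-1)$. By the fundamental formula \eqref{Eq:FunForm}, extended to $p$ generators, this equals $\omega(r_{j})-1$. Since $r_{j}$ is a relator, $\omega(r_{j})=1$, so the entry is $0$.

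\textbf{Exactness at $\Z$ and $\Z A$.} Surjectivity of $\varepsilon$ is clear. Exactness at $\Z A$ amounts to showing that the augmentation ideal $I_{A}$ is generated as a left ideal by the $x_{i}-1$. This follows from the identities $gh-1=g(h-1)+(g-1)$ and $g^{-1}-1=-g^{-1}(g-1)$, by induction on word length.

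\textbf{Exactness at $\Z A^{p}$.} This is the main step, and I would argue topologically rather than algebraically. Let $X$ be the presentation $2$-complex of $A$, with one $0$-cell, $p$ $1$-cells and $q$ $2$-cells, and let $\widetilde X$ be its universal cover. The cellular chain complex $C_{*}(\widetilde X)$ consists of free $\Z A$-modules with bases the lifts of the cells. The choice of lifts is made via a chosen lift of the basepoint.

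\begin{itemize}
\item The boundary of the lifted $1$-cell $\tilde x_{i}$ is $(x_{i}-1)\tilde v$, which recovers $d_{1}$.
\item The boundary of a lifted $2$-cell $\tilde r_{j}$ is the lift of the loop $r_{j}$. Reading along the word, each letter $x_{i}^{\pm1}$ contributes $\pm$ (current vertex)$\cdot\tilde x_{i}$, with the correction $-\omega(ux_{i}^{-1})$ for inverse letters. This is exactly the recursive definition of the Fox derivative, so $\partial\tilde r_{j}=\sum_{i}D_{x_{i}}(r_{j})\tilde x_{i}$, giving $d_{2}$.
\end{itemize}

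Since $\widetilde X$ is simply connected, $\Ho_{1}(\widetilde X)=0$ by Hurewicz, so $\ker d_{1}=\im d_{2}$. Connectedness of $\widetilde X$ gives $\Ho_{0}(\widetilde X)=\Z$, recovering exactness at $\Z A$ and $\Z$ as well.

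\textbf{Main obstacle.} The hardest part is the identification of the cellular boundary of the lifted $2$-cells with the Fox derivatives. One must get the left/right conventions right: row vectors acting with matrices on the right, and deck transformations acting on the left. I would check this carefully by induction on the length of the word, using $D_{x}(uv)=D_{x}(u)+\omega(u)D_{x}(v)$ and the fact that the lifted path of $uv$ is the lift of $u$ followed by $\omega(u)$ times the lift of $v$.
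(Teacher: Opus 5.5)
Your proposal is correct and follows essentially the route the paper relies on. The paper gives no proof beyond citing Lyndon, but in \S\ref{subsec:pictures} it identifies this partial resolution with the cellular chain complex of the universal cover of the presentation $2$-complex, and in the proof of Lemma~\ref{hyp:AS} it uses your observation that the lift of a relator from the base point is the vector of its Fox derivatives; your induction via $D_{x}(uv)=D_{x}(u)+\omega(u)D_{x}(v)$ with the left deck action yields the paper's row-vector convention for $d_{1}$ and $d_{2}$.
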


    The following two elementary facts about group rings will be used repeatedly.

    \begin{lemma}\label{lem:regular}
        Let $A$ be a group and $h\in A$. If $h$ has infinite order, then $h-1$ is a non-zero-divisor in $\Z A$. If $h$ has finite order $d$, then the left annihilator $\Ann^{L}_{\Z A}(h-1)=\{a\in\Z A : a(h-1)=0\}$ is $\Z A\cdot\Ngk{h}{d-1}$.
    \end{lemma}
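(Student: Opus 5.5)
The statement has two parts, and I will treat them separately. For the infinite-order case, I would use the fact that $\Z\langle h\rangle\cong\Z[t,t^{-1}]$ and that $\Z A$ is a free right $\Z\langle h\rangle$-module. A basis is given by a set of left coset representatives $\{a_{j}\}$ for $\langle h\rangle$ in $A$.

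Any $a\in\Z A$ can then be written uniquely as a finite sum $a=\sum_{j}a_{j}p_{j}$ with $p_{j}\in\Z\langle h\rangle$. This gives
\[
a(h-1)=\sum_{j}a_{j}\,p_{j}(h-1).
\]
By uniqueness of the decomposition, $a(h-1)=0$ forces $p_{j}(h-1)=0$ in $\Z[t,t^{-1}]$ for every $j$. Since the Laurent polynomial ring is an integral domain and $t-1\neq 0$, each $p_{j}$ vanishes, so $a=0$.

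I would also record that $h-1$ is not a right zero-divisor either. This follows from the same argument with right cosets, or by applying the anti-involution $x\mapsto x^{-1}$, although the statement only needs the left version.

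For the finite-order case, the inclusion $\Z A\cdot\Ngk{h}{d-1}\subseteq\Ann^{L}_{\Z A}(h-1)$ is immediate from the telescoping identity $\Ngk{h}{d-1}(h-1)=h^{d}-1=0$.

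For the reverse inclusion I would use the same coset decomposition, now with $\Z\langle h\rangle\cong\Z[t]/(t^{d}-1)$. This reduces the claim to showing that the annihilator of $t-1$ in $\Z[t]/(t^{d}-1)$ is generated by $N=1+t+\dots+t^{d-1}$.

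To check this, let $p=\sum_{i=0}^{d-1}\lambda_{i}t^{i}$ and compare coefficients in $p(t-1)=0$ modulo $t^{d}-1$. The coefficient of $t^{i}$ is $\lambda_{i-1}-\lambda_{i}$, with indices read mod $d$. Its vanishing for all $i$ forces all the $\lambda_{i}$ to be equal, so $p\in\Z\cdot N$.

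Reassembling over the cosets gives
\[
a=\sum_{j}a_{j}\mu_{j}N=\Bigl(\sum_{j}\mu_{j}a_{j}\Bigr)N\in\Z A\cdot\Ngk{h}{d-1},
\]
where $\mu_{j}\in\Z$.

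The only point needing care is the module-theoretic bookkeeping: the basis must be chosen so that $\Z\langle h\rangle$ acts on the side where $h-1$ is multiplied, here on the right, so that the decomposition is compatible with right multiplication. Everything else is a direct computation, and I expect no real obstacle.
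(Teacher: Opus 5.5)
Your proposal is correct and matches the paper's proof. Both decompose $\Z A$ along left cosets of $\langle h\rangle$ as a free right $\Z\langle h\rangle$-module, reduce to the cyclic group ring, and use the Laurent polynomial ring (infinite order) or the annihilator of $t-1$ in $\Z[t]/(t^{d}-1)$ (finite order), with right cosets handling the other side. The only difference is that you write out the coefficient comparison and the reassembly step, which the paper takes as standard.
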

    
    \begin{proof}
        A set of representatives $X$ for the left cosets of $\langle h\rangle$ gives $\Z A=\bigoplus_{x\in X}x\,\Z\langle h\rangle$ as a right $\Z\langle h\rangle$-module. Moreover, right multiplication by $h-1$ preserves each summand, so both statements about products $a(h-1)$ reduce to the case $A=\langle h\rangle$. If $h$ has infinite order, then $\Z\langle h\rangle$ is the Laurent polynomial ring $\Z[h^{\pm1}]$, in which $h-1$ is a non-zero-divisor. If $h$ has finite order $d<\infty$, then $\Z\langle h\rangle$ is $\Z[\Z/d]$, in which the annihilator of $h-1$ is generated by $1+h+\dots+h^{d-1}$. Products $(h-1)a$ are treated in the same way using right cosets.
    \end{proof}
    
    For the central extension $G$, Lyndon's Theorem \ref{Thm:preres} supplies the first two differentials of the resolution. It then remains to show that $\ker(d_{2})$ is free of rank one, with an explicit generator.

    \subsection{Identities among relations} \label{subsec:pictures}
    For any finite presentation $\mathcal{P}$ of a group, the cellular chain groups of the universal cover of the presentation $2$-complex $X_{\mathcal{P}}$ are free on the cells and give the beginning of a free resolution for the trivial module, as in Theorem \ref{Thm:preres} (see \cite[\S1]{loday}). Since there are no $3$-cells, the Hurewicz theorem \cite[(20.1.1)]{tomdieck} gives
    \[
      \pi_{2}(X_{\mathcal{P}}) \cong \Ho_{2}(\widetilde{X}_{\mathcal{P}}) = \ker(d_{2}).
    \]
    The classes on the left are represented by \emph{spherical pictures}: discs labelled by relators, joined by arcs labelled by generators, on $S^{2}$ \cite[\S2]{howiepictures}. Rather than working with pictures, we read $\ker(d_{2})$ off the universal orbifold covering of $Q_{0}$. This computes the same group, and unlike the picture-theoretic methods recalled in Remark \ref{rem:whyk2} it needs no lower bound on the exponent $n$.

    Let $\mathcal{R}\trianglelefteq\mathcal{F}$ be the normal closure of the three relators $f^{l}, g^{m}, W^{n}=\W^{n}$ so that $T\cong\mathcal{F}/\mathcal{R}$. Consider the partial free resolution for $\Z\in\ZTmod$ obtained from Theorem \ref{Thm:preres}:
    \[
        \begin{tikzcd}
             \ZT^{3}  \arrow[r, "\delta_{2}"] & \ZT^{2}  \arrow[r, "\delta_{1}"] & \ZT \arrow[r, "\varepsilon"] & \Z \arrow[r] & 0,
        \end{tikzcd}
    \]
    where the differentials are given by
     \begin{equation}\label{eq:dT}
        \delta_{1}=\begin{bmatrix}
            f-1\\ 
            g-1
        \end{bmatrix}, \qquad \delta_{2}=\begin{bmatrix}
            \Nf & 0 \\ 
            0 & \Ng \\
            \Nw\wf & \Nw\wg
        \end{bmatrix}.
    \end{equation}
    The image $\im(\delta_{2})=\mathcal{R}^{\ab}$ is the relation module of the presentation of $T$, generated by the images of the three relators, and $\ker(\delta_{2})$ is the module of \emph{identities among relations}. Each relator is a proper power, which produces three identities
    \begin{equation*}\label{eq:zetas}
      \zeta_{1}=(f-1,0,0), \qquad \zeta_{2}=(0,g-1,0), \qquad \zeta_{3}=(0,0,W-1)
    \end{equation*}
    because $(f-1)\Nf=(g-1)\Ng=0$ and $(W-1)\Nw\wf=(W^{n}-1)\wf=0$ in $\ZT$, and similarly for $\wg$. The quotient of $\ZT^{3}$ by the submodule these three identities generate is $\ZT/\ZT(f-1)\oplus\ZT/\ZT(g-1)\oplus\ZT/\ZT(W-1)$. In Lemma \ref{hyp:AS} we use the geometry of the truncated orbifold $Q_{0}$, as set up in Section \ref{sec:orbifold}, to prove that these three identities generate the whole kernel $\ker(\delta_{2})$.
    
    Jones and Reid build $Q_{0}$ from a genus-two handlebody $H$, with $\pi_{1}(H)$ free on $f,g$, by attaching three \emph{singular $2$-handles} $V_{1},V_{2},V_{3}$. Each of these is a copy of $D^{2}\times I$ modulo a cyclic group of order $l$, $m$ or $n$ acting by rotation on the disc, so that its central arc $\{0\}\times I$ consists of cone points of that order. These handles are glued along disjoint annuli in $\partial H$ about curves representing $f$, $g$ and $\W$ respectively \cite[proof of Thm.~8]{jonesreid}. The Seifert--van Kampen theorem for orbifolds \cite[Cor.~2.3]{BoileauMaillotPorti2003}, applied once for each handle, presents $\pi_{1}^{\orb}(Q_{0})$ as $\pi_{1}(H)$ modulo the $l$-th, $m$-th and $n$-th powers of the attaching curves, which is precisely the presentation of $T$ used to define $\delta_{2}$. Since $k=2$, no neighbourhood of a vertex is added to the singular locus in this construction. Thus $Q_{0}=H\cup V_{1}\cup V_{2}\cup V_{3}$, where the handles are pairwise disjoint and each meets $H$ in its attaching annulus.
    
    The attaching curves are determined only up to free homotopy. In particular, the third handle represents $\W$ up to conjugacy, and it imposes a relator $\gamma W^{n}\gamma^{-1}$ for some $\gamma\in\pi_{1}(H)$. Since $W^{n}$ is trivial in $T$, the rules for Fox derivatives give $D_{x}(\gamma W^{n}\gamma^{-1})=\bar{\gamma}\,D_{x}(W^{n})$, so that if we replace $W^{n}$ by this relator, the third row of $\delta_{2}$ is multiplied on the left by the unit $\bar{\gamma}\in T$. Right multiplication by the invertible matrix $\operatorname{diag}(1,1,\bar{\gamma})$ maps the kernel of the new matrix onto $\ker(\delta_{2})$. Moreover, this matrix fixes $\zeta_{1}$ and $\zeta_{2}$, and sends the new third identity $(0,0,\bar{\gamma}W\bar{\gamma}^{-1}-1)$ to $\bar{\gamma}\zeta_{3}$. It follows that Lemma \ref{hyp:AS} holds for one presentation if and only if it holds for the other, and we may take the third curve to represent $\W$ without any loss of generality.
    
    \begin{lemma}\label{hyp:AS}
        The differential $\delta_{2}$ from \eqref{eq:dT} induces an isomorphism
        \[
            \begin{tikzcd}
                \ZT/\ZT(f-1)\ \oplus\ \ZT/\ZT(g-1)\ \oplus\ \ZT/\ZT(W-1) \arrow[r,"\sim"] & \mathcal{R}^{\ab}.
            \end{tikzcd}
        \]
        Equivalently, $\ker(\delta_{2})=\ZT\zeta_{1}+\ZT\zeta_{2}+\ZT\zeta_{3}$.
    \end{lemma}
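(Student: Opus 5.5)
We read off the relation module from the cellular chain complex of the universal orbifold cover $C$ of $Q_{0}=H\cup V_{1}\cup V_{2}\cup V_{3}$ (Theorem \ref{thm:geometry}\ref{geo:retract}). The plan is to build a $\Gamma$-equivariant cell structure on $C$ adapted to this handle decomposition and compare its chain complex with the Lyndon complex \eqref{eq:dT}. First we give $H$ a cell structure with one $0$-cell and two $1$-cells $f,g$, together with a thickening. Its preimage in $C$ is a free $T$-complex: a thickened tree-like graph, the Cayley graph of $T$ on $f,g$. Each singular handle $V_{j}=(D^{2}\times I)/(\Z/\alpha_{j})$ deformation retracts onto a cone-disc whose boundary is the attaching curve. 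Its preimage in $C$ is a disjoint union of copies of $D^{2}\times I$, one for each coset of the cyclic local group $K_{j}=\langle f\rangle,\langle g\rangle,\langle W\rangle$. We replace each such copy by a $2$-disc whose boundary wraps once around the lifted attaching circle, the latter having length $\alpha_{j}$ in the generator. Because $C$ is contractible, the resulting $T$-CW complex $Y$ is contractible, and it is homotopy equivalent to $C$ equivariantly. Its cellular chain complex is
\[
0\to \bigoplus_{j=1}^{3}\ZT\otimes_{\Z K_{j}}\Z \xrightarrow{\ \partial_{2}\ } \ZT^{2}\xrightarrow{\ \delta_{1}\ }\ZT\to\Z\to0,
\]
and it is exact, since $\Ho_{2}(Y)=0$ and there are no $3$-cells.

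Next we identify $\partial_{2}$. We have $\ZT\otimes_{\Z K_{j}}\Z=\ZT/\ZT(k_{j}-1)$, where $k_{j}\in\{f,g,W\}$. The boundary of the disc lying over $V_{j}$ is the lift of the loop $k_{j}$, not of $k_{j}^{\alpha_{j}}$, followed by its $K_{j}$-translates; equivalently, the boundary is the image of the $\alpha_{j}$-fold relator loop divided by the stabiliser. Concretely, the Fox-calculus boundary of the lifted relator $k_{j}^{\alpha_{j}}$ is $\mathbf{N}\,D_{x}(k_{j})$. This is exactly the third row of $\delta_{2}$ (and similarly $\Nf$, $\Ng$ for the first two rows). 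Hence the composite $\ZT^{3}\to\bigoplus_{j}\ZT/\ZT(k_{j}-1)\xrightarrow{\partial_{2}}\ZT^{2}$ equals $\delta_{2}$. Injectivity of $\partial_{2}$, which follows from exactness of the complex for $Y$, then says precisely that $\ker\delta_{2}=\ZT\zeta_{1}+\ZT\zeta_{2}+\ZT\zeta_{3}$, and that $\delta_{2}$ induces the stated isomorphism onto its image $\mathcal{R}^{\ab}$.

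The main obstacle is justifying the equivariant cell structure on $C$. We must check that the disc stabilisers are exactly the conjugates of $K_{j}$, with no larger stabilisers. This holds because the local groups along the central arcs are $\Z/\alpha_{j}$, and no vertex of the singular locus lies in $Q_{0}$ since $k=2$; here we use Lemma \ref{lem:torsion-class} and the injectivity of the edge groups. We must also check that the Hurewicz/orbifold covering really yields a contractible $Y$ rather than merely an acyclic one. For this we plan to argue that each lifted handle is an equivariant collar, so that collapsing it to its core disc is a $T$-equivariant deformation retraction. Contractibility of $Y$ is then inherited from $C$ via Theorem \ref{thm:geometry}\ref{geo:retract}, and the boundary turnovers play no role beyond ensuring that $C$ is contractible.
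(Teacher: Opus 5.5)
Your proposal is correct and is essentially the paper's argument. Both use $Q_{0}=H\cup V_{1}\cup V_{2}\cup V_{3}$ and the contractible cover $C$, identify $\widetilde H$ with the Cayley graph and the lifted handles with the permutation modules $\ZT/\ZT(k_{j}-1)$ (using that $k_{j}$ has order $\alpha_{j}$ in $T$, and that the rotations in $K_{j}$ preserve the orientation of the core discs, which you should state), and read off each disc boundary as the lift of the relator $k_{j}^{\alpha_{j}}$ with Fox vector $\mathbf{N}D_{x}(k_{j})$; drop the phrase ``not of $k_{j}^{\alpha_{j}}$'', since the lift of $k_{j}$ followed by its $K_{j}$-translates \emph{is} that lift. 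The only difference is packaging: the paper uses the long exact sequence of the pair $(C,\widetilde H)$ and excision over the lifted handles instead of collapsing them onto core discs, which avoids the equivariant cell-structure issue you flag, and in your version exactness only needs $Y$ to be acyclic, not contractible.
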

    
    \begin{proof}
        Let $\pi|_{C}\colon C\to Q_{0}$ be the universal orbifold covering of Theorem \ref{thm:geometry}, so that $C$ is contractible with deck group $\Gamma\cong T$, and put $\widetilde{H}\coloneqq\pi^{-1}(H)$. Now since $\Ho_{2}(C)=\Ho_{1}(C)=0$, it follows from the long exact sequence of the pair $(C,\widetilde{H})$ that the connecting homomorphism
        \[
        \Delta\colon\Ho_{2}(C,\widetilde{H})\longrightarrow\Ho_{1}(\widetilde{H})
        \]
        is an isomorphism of $\ZT$-modules. We now identify $\Ho_{1}(\widetilde{H})$ and $\Ho_{2}(C,\widetilde{H})$, and then show that $\Delta$ is induced by $\delta_{2}$.
        
        Firstly, the map $\pi_{1}(H)\to T$ is surjective and, because $H$ avoids the singular locus, the covering $\pi|_{\widetilde{H}}\colon\widetilde{H}\to H$ is connected by \cite[Thm.~2.2]{BoileauMaillotPorti2003}. Moreover, the handlebody $H$ is a regular neighbourhood of a wedge of two circles representing $f$ and $g$, and it deformation retracts onto this wedge. By lifting that retraction over the covering \cite[(3.2.2) and (3.1.3)]{tomdieck}, we see that $\widetilde{H}$ deformation retracts onto the Cayley graph of $T$ with respect to $f$ and $g$. The associated cellular chain complex is given by $\ZT^{2}\to\ZT$ with boundary $\delta_{1}$, and since there are no $2$-cells we have
        \[
        \Ho_{1}(\widetilde{H})=\ker(\delta_{1})=\mathcal{R}^{\ab}.
        \]
        Here the second equality is the standard identification of the relation module, as in Theorem \ref{Thm:preres}. Under this identification, a relator lifts from the base point of $\widetilde{H}$ to a closed loop represented by the vector of its Fox derivatives \cite[Ch.~II, \S3]{hams}.
    
        To describe $\Ho_{2}(C,\widetilde{H})$, we consider the first handle $V_{1}$, which is the quotient of $D^{2}\times I$ by a rotation of order $l$ of the disc. The generator of $\pi_{1}^{\orb}(V_{1})\cong\Z/l$ maps to a conjugate of $f$, which has order $l$ in $T$ by Lemma \ref{lem:torsion-class}. It follows that each component of $\pi^{-1}(V_{1})$ is a copy of the universal cover $D^{2}\times I$ of $V_{1}$, with stabiliser given by a conjugate of $\langle f\rangle$ acting by rotations of the disc, and the group $T$ permutes these components transitively. Then for a suitable choice of base points, one of the components has stabiliser $\langle f\rangle$, and the boundary of a disc $D^{2}\times\{t\}$ in it is the lift of $f^{l}$ from the base point of $\widetilde{H}$. The same holds for $V_{2}$ and $V_{3}$, with $g^{m}$ and $W^{n}$ in place of $f^{l}$, respectively.
    
        For $i\in\{1,2,3\}$, each component of $\pi^{-1}(V_{i})$ meets $\widetilde{H}$ in its lateral boundary $\partial D^{2}\times I$, and distinct components are disjoint. Then since $Q_{0}=H\cup V_{1}\cup V_{2}\cup V_{3}$, excision identifies $\Ho_{2}(C,\widetilde{H})$ with the direct sum of the groups $\Ho_{2}(D^{2}\times I,\partial D^{2}\times I)\cong\Z$, one for each component, each of which is generated by the class of a disc $D^{2}\times\{t\}$. Moreover, the group $T$ acts by permuting these summands because it permutes the components. Furthermore, since rotations preserve the orientation of the disc, the stabiliser of a component acts trivially on its summand. The summands coming from $V_{1}$ therefore form the permutation module $\Z[T/\langle f\rangle]\cong\ZT/\ZT(f-1)$, generated by the class of such a disc with stabiliser $\langle f\rangle$. Treating $V_{2}$ and $V_{3}$ in the same way, we obtain an isomorphism
        \[
        \Ho_{2}(C,\widetilde{H}) \cong \frac{\ZT}{\ZT(f-1)}\oplus\frac{\ZT}{\ZT(g-1)} \oplus\frac{\ZT}{\ZT(W-1)}.
        \]
    
        Finally, $\Delta$ is $T$-equivariant and sends the class of a disc $D^{2}\times\{t\}$ to the class of its boundary circle. For the three generators above, these circles are the lifts of $f^{l}$, $g^{m}$ and $W^{n}$ from the base point, whose classes in $\Ho_{1}(\widetilde{H})$ are $(\Nf,0)$, $(0,\Ng)$ and $\Nw(\wf,\wg)$, where for the last we have used $D_{x}(W^{n})=\Nw D_{x}(W)$. These clearly coincide with the three rows of $\delta_{2}$ so that $\Delta$ is the desired isomorphism onto $\im(\delta_{2})\cong \mathcal{R}^{\ab}$.
    \end{proof}
    
    \begin{remark}\label{rem:whyk2}
        The hypothesis $k=2$ is used in the proof of Lemma \ref{hyp:AS} to decompose $Q_{0}$ into $H$ and three disjoint handles. Excision then splits $\Ho_{2}(C,\widetilde H)$ over the preimages of the three handles. For $k\le1$ the orbifold $Q_{0}$ also contains the cone on each spherical vertex link. These cones are glued to the ends of the handles, and thus excision no longer identifies $\Ho_{2}(C,\widetilde H)$ with the sum of the three induced modules.
    
        Howie's theory of pictures for one-relator products with a high-powered relator \cite{howiepictures,howiefourth} gives another route to Lemma \ref{hyp:AS} for part of our range, and it applies to arbitrary relators rather than only to Farey words. However, it needs a lower bound on the exponent of the relator. Indeed, the results of \cite{howiepictures} require $n\ge5$ and those of \cite{howiefourth} treat $n=4$ (cf.\ \cite[p.~588]{ellistri}), so the cases $n\in\{2,3\}$, such as the triple $(6,3,2)$ of Example \ref{ex:detzero}, lie outside their range.
    \end{remark}

    \subsection{The free resolution over \texorpdfstring{$\ZG$}{ZG}}
    With Lemma \ref{hyp:AS} in hand we can explicitly compute the resolution for the trivial module over $G=G_{W}(l,m,n)$.
    \begin{theorem}\label{thm:Gres}
        Suppose that $k=2$ and $s\geq 2$. Then the trivial module $\Z\in\ZGmod$ has a free resolution of the form
        \begin{equation}\label{Eq:GRes}
                \begin{tikzcd}
                	0 \arrow[r] & \ZG \arrow[r,"d_{3}"] & \ZG^{2} \arrow[r,"d_{2}"] & \ZG^{2} \arrow[r, "d_{1}"] & \ZG \arrow[r, "\varepsilon"] & \Z \arrow[r] & 0,
                \end{tikzcd}
        \end{equation}
        with differentials given by
        \[
            d_{1}=\begin{bmatrix}
                    f-1 \\
                    g-1
            \end{bmatrix},\;\; d_{2} = \begin{bmatrix}
                    \Nw\wf-\Nf & \Nw\wg  \\
                    \Nw\wf & \Nw\wg-\Ng  \\
                \end{bmatrix},\;\;d_{3} = \begin{bmatrix}
                \wf(f-1) & \wg(g-1)
            \end{bmatrix}.
        \]
        Thus $G$ is of type $\FL$ with $\cd(G)=3$.
    \end{theorem}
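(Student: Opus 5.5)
The plan is to take $d_{1},d_{2}$ from Lyndon's Theorem \ref{Thm:preres}, applied to the presentation $G=\langle f,g\mid \W^{n}f^{-l},\ \W^{n}g^{-m}\rangle$. Then check $d_{3}d_{2}=0$ and that $d_{3}$ is injective. The real content is exactness at the middle $\ZG^{2}$, which I would pull back from Lemma \ref{hyp:AS} over $\ZT$.

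\emph{The matrix $d_{2}$.} By the product rule,
\[
D_{f}(W^{n}f^{-l})=\Nw\wf+\omega(W^{n})D_{f}(f^{-l})=\Nw\wf-W^{n}f^{-l}\Nf,
\]
and $W^{n}f^{-l}=1$ in $G$, so this equals $\Nw\wf-\Nf$. The same computation gives the other three entries.

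\emph{The relation $d_{3}d_{2}=0$.} This uses three facts:
\[
(f-1)\Nf=c-1,\qquad (g-1)\Ng=c-1,\qquad (W-1)\Nw=c-1,
\]
together with the fundamental formula \eqref{Eq:FunForm} in the form $\wf(f-1)+\wg(g-1)=W-1$. The first entry of $d_{3}d_{2}$ is then
\[
(W-1)\Nw\wf-\wf(f-1)\Nf=(c-1)\wf-\wf(c-1)=0,
\]
since $c$ is central. The second entry vanishes in the same way.

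\emph{Injectivity of $d_{3}$.} Suppose $a\,d_{3}=0$. Then $a\wf(f-1)=0$ and $a\wg(g-1)=0$. Adding and applying \eqref{Eq:FunForm} gives $a(W-1)=0$. Now $W$ has infinite order in $G$, because $W^{n}=c$ and $\langle c\rangle\cong\Z$ by Theorem \ref{thm:central}. Lemma \ref{lem:regular} then gives $a=0$.

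\emph{Exactness at the middle $\ZG^{2}$, reduction to $T$.} Let $x=(x_{1},x_{2})\in\ker d_{2}$ and reduce modulo $c-1$. Put
\[
y\coloneqq(-\bar x_{1},\,-\bar x_{2},\,\bar x_{1}+\bar x_{2})\in\ZT^{3}.
\]
A direct check shows $y\,\delta_{2}=\bar x\,\bar d_{2}=0$. Lemma \ref{hyp:AS} then writes
\[
y=a\zeta_{1}+b\zeta_{2}+e\zeta_{3},
\]
so that $\bar x_{1}=-a(f-1)$, $\bar x_{2}=-b(g-1)$ and $\bar x_{1}+\bar x_{2}=e(W-1)$. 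Comparing the last identity with \eqref{Eq:FunForm}, I want to replace $a,b$ by $-e\wf,-e\wg$. The possible discrepancy is controlled by the left annihilators of $f-1$ and $g-1$ in $\ZT$, which are $\ZT\Nf$ and $\ZT\Ng$ by Lemma \ref{lem:regular}. The aim is to show $\bar x\in\im\bar d_{3}$; any leftover discrepancy terms must be absorbed using the relation $\Nw(W-1)=0$ in $\ZT$. Granting this, write $\bar x=\bar e\,\bar d_{3}$ and lift $e$ to $\ZG$. Then $x-e\,d_{3}\in\ker d_{2}\cap(c-1)\ZG^{2}$. Since $c-1$ is a central non-zero-divisor, this element is $(c-1)x'$ with $x'\in\ker d_{2}$. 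Hence $K\coloneqq\ker d_{2}/\im d_{3}$ satisfies $K=(c-1)K$.

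\emph{Main obstacle: concluding $K=0$.} Nakayama-type reasoning is not available over $\ZG$, so this is the step I expect to be hardest. I would first use Proposition \ref{prop:G-torsionfree} and Corollary \ref{cor:FP}: since $\cd(G)=3$ and the complex is exact in degrees $\le1$, Schanuel's lemma shows $P\coloneqq\ker d_{2}$ is finitely generated projective. This gives a short exact sequence $0\to\ZG\to P\to K\to0$. I would then apply $\Z\otimes_{\ZG}-$ and compare with rational homology. On one side, $\Ho_{3}(G,\Q)\cong\Q$ from the LHS sequence of $\langle c\rangle\to G\to T$, using $\Ho_{2}(T,\Q)=\Q$ from Theorem \ref{thm:H2Q}. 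On the other, the Euler characteristic vanishes, since $G$ has infinite centre. From these I would show that $P$ has Hattori--Stallings rank $1$, that $\ZG\to P$ is rationally an isomorphism after tensoring down, and hence that $K$ is a finitely generated module with $K=(c-1)K$ and trivial rank. Should this rank route fail, the fallback is to avoid the division step altogether: run the geometric argument of Lemma \ref{hyp:AS} directly on the $\Z$-cover of $C$ corresponding to $G$, obtained by pulling back along the circle bundle of Corollary \ref{cor:FP}, and so identify $\ker d_{2}$ with a rank-one free module generated by the image of $d_{3}$.
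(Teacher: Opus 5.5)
The computation of $d_2$, the check $d_3d_2=0$ and the injectivity of $d_3$ all agree with the paper. The step you ``grant'' is also fine, but it works through the relation module rather than through the separate annihilators of $f-1$ and $g-1$. From $\bar x_1=-a(f-1)$, $\bar x_2=-b(g-1)$, $\bar x_1+\bar x_2=e(W-1)$ and \eqref{Eq:FunForm} you get $(a+e\wf,\,b+e\wg)\in\ker\delta_1=\im\delta_2$, say $(a+e\wf,\,b+e\wg)=(p,q,r)\,\delta_2$. Since $\Nf(f-1)=\Ng(g-1)=0$ in $\ZT$, this gives $\bar x=(e-r\Nw)\,\od_3$. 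So you do legitimately reach $\ker d_2=\im d_3+(c-1)\ker d_2$, that is, $K=(c-1)K$.

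\textbf{The gap is the final step.} The rank route cannot conclude $K=0$. Grant every conclusion you aim for: $K$ finitely generated, $K=(c-1)K$, rank zero, and $\ZG\to P$ a rational isomorphism after tensoring down. The module $K'=\ZG/\ZG(c-2)$ satisfies all of these and is non-zero:
\begin{itemize}
\item it is induced from $\Z[c^{\pm1}]/(c-2)\cong\Z[1/2]$, hence non-zero;
\item it has the free resolution $0\to\ZG\to\ZG\to K'\to0$ given by right multiplication by $c-2$, which becomes multiplication by $-1$ after applying $\Q\otimes_{\ZG}-$;
\item its rank is $1-1=0$;
\item $c-1$ acts on it as $1$, so $K'=(c-1)K'$.
\end{itemize}
The circle-bundle fallback is too unspecified to close the gap. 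In particular, the bundle of Corollary \ref{cor:FP} lives over $M_0$ and realises $G_0$, not $G$.

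\textbf{The missing observation is that $c-1$ already annihilates $K$.} For $x=(x_1,x_2)\in\ker d_2$ put $u\coloneqq x_1+x_2$. The two kernel equations say $u\Nw\wf=x_1\Nf$ and $u\Nw\wg=x_2\Ng$. Multiply them on the right by $f-1$ and $g-1$, and use $\Nf(f-1)=\Ng(g-1)=c-1$ together with centrality of $c$. This gives the exact identity
\[
(c-1)\,x=u\Nw\,d_3\qquad\text{in }\ZG^{2}.
\]
Hence $(c-1)\ker d_2\subseteq\im d_3$, and combined with your decomposition this gives $\ker d_2=\im d_3$.

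The paper uses exactly this identity, but more economically. Your vector $y$ is essentially the one to which the paper applies Lemma \ref{hyp:AS}, and the paper keeps only its third coordinate, $\bar u=e(W-1)$. It lifts this to $u=\theta(W-1)+(c-1)\mu$, so that $u\Nw=(c-1)(\theta+\mu\Nw)$ and the identity becomes $(c-1)x=(c-1)(\theta+\mu\Nw)\,d_3$. Cancelling the central non-zero-divisor $c-1$ then gives $x=(\theta+\mu\Nw)\,d_3$ directly. With this identity, the analysis of the first two coordinates, Schanuel's lemma, projectivity of $\ker d_2$ and any rank computation all become unnecessary.
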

    \begin{proof}
        Exactness of the first three terms is Theorem \ref{Thm:preres}, applied to the relators $W^{n}f^{-l}$ and $W^{n}g^{-m}$. Next we check that $(\wf(f-1),\wg(g-1))$ lies in $\ker(d_{2})$. Using the fundamental formula \eqref{Eq:FunForm} and the fact that $c-1\in\ZG$ is central, we have
        \begin{align*}
            \wf(f-1)(\Nw\wf-\Nf) &+\wg(g-1)\Nw\wf \\
                &= (\wf(f-1)+\wg(g-1))\Nw\wf - \wf(f-1)\Nf \\
                &= (W-1)\Nw\wf-\wf(f^{l}-1) \\
                &= (W^{n}-1)\wf - \wf(f^{l}-1) \\
                &= (c-1)\wf - \wf(c-1) \\
                &=0,
        \end{align*}
        and similarly for the second coordinate. It remains to show that $\ker(d_{2})\subseteq\im(d_{3})$ and that $d_{3}$ is injective. We start with the former and let $(u_{1},u_{2})\in\ker(d_{2})$, so that
        \begin{equation*}\label{eq:d2ker}
            u_{1}(\Nw\wf-\Nf)+u_{2}\Nw\wf = 0, \quad u_{1}\Nw\wg + u_{2}(\Nw\wg-\Ng) = 0.
        \end{equation*}
        If we set $u\coloneqq u_{1}+u_{2}$, then these equations are equivalent to
        \begin{align*}
            u\Nw\wf = u_{1}\Nf, \quad u\Nw\wg = u_{2}\Ng.
        \end{align*}
        Multiplying these equations on the right by $f-1$ and $g-1$ respectively, we obtain
        \begin{align}
            u\Nw\wf(f-1) = u_{1}\Nf(f-1)=u_{1}(c-1), \label{eq:u1d2}\\ 
            u\Nw\wg(g-1) = u_{2}\Ng(g-1)=u_{2}(c-1). \label{eq:u2d2}
        \end{align}
        Reducing modulo the central element $c-1\in\ZG$, we have
        \begin{align*}
            \Bar{u}\Nw\wf(f-1) = \Bar{u}\Nw\wg(g-1) = 0\in\ZT.
        \end{align*}
        Since $f$ and $g$ have orders $l$ and $m$ in $T$ (Lemma \ref{lem:torsion-class}), Lemma \ref{lem:regular} gives $\Ann^{L}_{\ZT}(f-1)=\ZT\cdot\Nf$ and $\Ann^{L}_{\ZT}(g-1)=\ZT\cdot\Ng$, and hence
        \begin{align*}
            \Bar{u}\Nw\wf = v_{1}\Nf,\quad  \Bar{u}\Nw\wg=v_{2}\Ng,
        \end{align*}
        for some $v_{1},v_{2}\in \ZT$. Then $\zeta\coloneqq (-v_{1}, -v_{2}, \Bar{u})$ satisfies $\zeta\delta_{2}=0$, that is $\zeta\in \ker(\delta_{2})$. By Lemma \ref{hyp:AS} we can write $\zeta=a_{1}\zeta_{1}+a_{2}\zeta_{2}+a_{3}\zeta_{3}$ for some $a_{1},a_{2},a_{3}\in\ZT$, and by comparing third coordinates we obtain $\bar{u}=a_{3}(W-1)$.

        Lifting this equation back to $\ZG$ we obtain $u=\theta(W-1)+(c-1)\mu$, with $\theta,\mu\in\ZG$. Substituting this into \eqref{eq:u1d2} and \eqref{eq:u2d2} we have
        \begin{align*}
            u_{1}(c-1) = u\Nw\wf(f-1) &= (\theta(W-1)+(c-1)\mu)\Nw\wf(f-1)\\
            &= \theta(W-1)\Nw\wf(f-1) + (c-1)\mu\Nw\wf(f-1) \\
            &= (c-1)(\theta\wf(f-1) + \mu\Nw\wf(f-1)) \\
            &= (c-1)(\theta + \mu\Nw)\wf(f-1),\\
            \\
            u_{2}(c-1) = u\Nw\wg(g-1) &= (\theta(W-1)+(c-1)\mu)\Nw\wg(g-1) \\
            &=(c-1)(\theta + \mu\Nw)\wg(g-1).
        \end{align*}
        Since $c$ has infinite order in $G$ by Theorem \ref{thm:central}, it follows from Lemma \ref{lem:regular} that the central element $c-1$ is a non-zero-divisor. Cancelling $c-1$ from both sides above, we obtain
        \begin{align*}
            u_{1} = (\theta + \mu\Nw)\wf(f-1),\quad u_{2} = (\theta + \mu\Nw)\wg(g-1).
        \end{align*}
        That is, $(u_{1},u_{2}) = (\theta + \mu\Nw)d_{3}\in\im(d_{3})$, and hence $\ker(d_{2})=\im(d_{3})$ as required.

        Finally, we claim that $d_{3}\colon\ZG\to \ZG^{2}$ is injective. Indeed, if $\lambda d_{3}=0$ for some $\lambda\in\ZG$, then $\lambda\wf(f-1)=\lambda\wg(g-1)=0$, so that $\lambda(W-1)=0$ by the fundamental formula \eqref{Eq:FunForm}. Since $W^{n}=c$ has infinite order, so does $W$, and hence $W-1$ is a non-zero-divisor by Lemma \ref{lem:regular}, and $\lambda=0$.
    \end{proof}

    \subsection{The eventually \texorpdfstring{$2$}{2}-periodic resolution over \texorpdfstring{$\ZT$}{ZT}}\label{sec:Tres}
    We say that a chain complex $C_{\bullet}$ of free modules, with differentials $\partial_{j}\colon C_{j}\to C_{j-1}$, is \emph{$2$-periodic in degrees at least $k$} if $C_{j}=C_{j+2}$ for all $j\ge k$ and $\partial_{j}=\partial_{j+2}$ for all $j\ge k+1$, and \emph{eventually $2$-periodic} if this holds for some $k$. The two ranges differ by one because $\partial_{k}$ maps $C_{k}$ to $C_{k-1}$, which lies below the periodic range. If the modules $C_{j}$ with $j\ge k$ all have the same rank, we call this rank the \emph{stable rank} of $C_{\bullet}$.
    
    We obtain a resolution of $\Z\in\ZTmod$ by applying \cite[Thm.~9.7(2)]{seanyc} to the central extension $1\to\langle c\rangle\to G\to T\to 1$ and the resolution of Theorem \ref{thm:Gres}. The hypotheses of \cite[Thm.~9.7(2)]{seanyc} hold in our situation. Indeed, by Theorem \ref{thm:Gres} the group $G$ is of type $\FL$ with $\cd(G)=3$, and $c$ is a non-trivial central element of $G$ by Theorem \ref{thm:central}. As in \cite[\S9.2]{seanyc}, the element $c-1$ is then a central non-zero-divisor with $\ZG/(c-1)\cong\ZT$, and $\ZG$ has left global dimension at most four by \cite[Lem.~9.2]{seanyc}.
    
    \begin{theorem}\label{thm:Tres}
        Suppose that $k=2$ and $s\geq 2$. Then the trivial module $\Z$ admits a free resolution over
        $\ZT$
        \[
            \begin{tikzcd}
                \cdots \arrow[r, "\partial_{5}"] & \ZT^{3} \arrow[r, "\partial_{4}"] & \ZT^{3}  \arrow[r, "\partial_{3}"] & \ZT^{3}  \arrow[r, "\partial_{2}"] & \ZT^{2}  \arrow[r, "\partial_{1}"] & \ZT \arrow[r, "\varepsilon"] & \Z \arrow[r] & 0,
            \end{tikzcd}
         \]
        which is $2$-periodic in degrees at least two with stable rank three,
        where
        \begin{align*}
            \partial_{1}=\begin{bmatrix}
            f-1 \\
            g-1
            \end{bmatrix},\;\; \partial_{2}&=\begin{bmatrix}
                	    \Nf & 0\\
                        \Nw\wf-\Nf & \Nw\wg  \\
                        \Nw\wf & \Nw\wg-\Ng  \\
                	\end{bmatrix},\\ \partial_{3} =\begin{bmatrix}
                	    f-1 & 0 & 0\\
                        g-1 & g-1 & 1-g\\
                        0 & \wf(f-1) & \wg(g-1)
                	\end{bmatrix},\;\;
                    \partial_{4}&=\begin{bmatrix}
                	    \Nf & 0 & 0\\
                        \Nw\wf-\Nf & \Nw\wg & \Nw  \\
                        \Nw\wf & \Nw\wg-\Ng & \Nw  \\
                	\end{bmatrix},
        \end{align*}
        and $\partial_{i+2}=\partial_{i}$ for all $i\ge3$.
    \end{theorem}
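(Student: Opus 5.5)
The plan is to apply the construction of \cite[Thm.~9.7(2)]{seanyc}, whose hypotheses were verified just before the statement, to the length-three resolution \eqref{Eq:GRes} of Theorem \ref{thm:Gres}, and then to identify the resulting matrices with $\partial_{1},\dots,\partial_{4}$. Independently of how the construction is phrased, I would also check directly that the displayed complex is exact, because the explicit form is what we actually need.

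First, the complex property. We have $\partial_{2}\partial_{1}=0$: the first row gives $\Nf(f-1)=f^{l}-1=0$ in $\ZT$, and the other two rows are the reductions of $d_{2}$, with $d_{2}d_{1}=0$. For $\partial_{3}\partial_{2}$, the first row is $(f-1)\Nf=0$. The second row reduces to $(g-1)\bigl(\Nf+\Nw\wf-\Nf-\Nw\wf\bigr)=0$ in the first column and to $(g-1)\bigl(\Nw\wg-\Nw\wg+\Ng\bigr)=(g-1)\Ng=0$ in the second. The third row is $\overline{d_{3}d_{2}}=0$. For $\partial_{4}\partial_{3}$ and $\partial_{3}\partial_{4}$, I would use the fundamental formula $\wf(f-1)+\wg(g-1)=W-1$, together with $\Nw(W-1)=W^{n}-1=0$ and $(W-1)\Nw=0$ in $\ZT$. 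This is a routine entrywise computation.

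Second, exactness. At $\ZT$ and at $\ZT^{2}$ it is Theorem \ref{Thm:preres}. The image of $\partial_{2}$ contains the rows $(\Nf,0)$ and $(\Nw\wf,\Nw\wg-\Ng)$, and hence also $(0,\Ng)$ and $\Nw(\wf,\wg)$, which are the rows of $\delta_{2}$. Conversely, every row of $\partial_{2}$ lies in $\im(\delta_{2})$. So $\im\partial_{2}=\mathcal R^{\ab}=\ker\partial_{1}$. For exactness in higher degrees, the cleanest route is the mechanism behind \cite[Thm.~9.7]{seanyc}. Since $c-1$ is a central non-zero-divisor, reducing \eqref{Eq:GRes} modulo $c-1$ gives a complex $\overline{P}$ whose homology is $\mathrm{Tor}^{\ZG}_{*}(\ZT,\Z)$. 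This equals $\Z$ in degrees $0$ and $1$ and vanishes elsewhere, because $0\to\ZG\xrightarrow{c-1}\ZG\to\ZT\to0$ is a free resolution of length one. The extra cycle in degree one is killed by the "Eisenbud operator" chain map $\overline{P}\to\overline{P}[2]$ coming from a null-homotopy of multiplication by $c-1$. Splicing copies of this operator in the standard way produces the periodic resolution, with rank $\operatorname{rk}P_{j}+\operatorname{rk}P_{j-2}$ in degree $j$: this gives $1,2,2+1,1+2,2+1,\dots$, which matches the stated ranks $1,2,3,3,3,\dots$ and stable rank $3$.

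Concretely, I would build a homotopy $s$ with $s_{0}\colon\ZG\to\ZG^{2}$ given by $\begin{bmatrix}\Nf & 0\end{bmatrix}$ (using $\Nf(f-1)=c-1$), and $s_{1},s_{2}$ satisfying $d s+s d=(c-1)$. The extra column and row entries of $\partial_{3}$ and $\partial_{4}$ (the row $(f-1,0,0)$, the column $(\Nf,\dots)$, the entries $\pm(g-1)$ and $\Nw$) should be exactly the reductions of these $s_{i}$. The main obstacle is identifying the $s_{i}$ precisely. The key candidate identity is $(1,-1)\,d_{2}=(-\Nf,\Ng)$, which would explain the column $(0,1,-1)^{\mathsf T}$ scaled by $g-1$, and $\Nw\cdot d_{3}$ summed against $(1,1)$, which relates to $\Nw(W-1)=c-1$ and explains the $\Nw$ column. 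If matching with \cite{seanyc} proves messy, I would instead prove $\ker\partial_{3}=\im\partial_{4}$ and $\ker\partial_{4}=\im\partial_{3}$ directly, using Lemma \ref{hyp:AS} together with Lemma \ref{lem:regular} in $\ZT$, as in the proof of Theorem \ref{thm:Gres}.
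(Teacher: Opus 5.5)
Your proposal follows the paper's route: apply the construction of \cite{seanyc} to a null-homotopy of multiplication by $z=1-c$ on the resolution of Theorem~\ref{thm:Gres}. Your candidate identities $(1,-1)d_{2}=(-\Nf,\Ng)$ and $d_{3}\,(1,1)^{\mathsf T}=W-1$ are exactly what make the paper's choice
\[
\varphi_{1}=\begin{bmatrix}-\Nf & 0\end{bmatrix},\qquad
\varphi_{2}=\begin{bmatrix}0 & 0\\ 1-g & g-1\end{bmatrix},\qquad
\varphi_{3}=\begin{bmatrix}-\Nw\\ -\Nw\end{bmatrix}
\]
work, and the stated matrices are then the blocks $\partial_{2}=\begin{bmatrix}-\overline{\varphi}_{1}\\ \od_{2}\end{bmatrix}$, $\partial_{3}=\begin{bmatrix}\od_{1} & -\overline{\varphi}_{2}\\ 0 & \od_{3}\end{bmatrix}$ and $\partial_{4}=\begin{bmatrix}-\overline{\varphi}_{1} & 0\\ \od_{2} & -\overline{\varphi}_{3}\end{bmatrix}$ (the entry $f-1$ of $\partial_{3}$ comes from $\od_{1}$, not from the homotopy). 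The one point your ``standard splicing'' passes over is the paper's check that $\varphi_{1}\varphi_{2}=0=\varphi_{2}\varphi_{3}$ (Hypothesis~5.1 of \cite{seanyc}), which lets the differentials be exactly these blocks with no higher-homotopy corrections; this check is immediate here, and its reduction modulo $c-1$ is already part of your entrywise verification of $\partial_{4}\partial_{3}=0=\partial_{3}\partial_{4}$.
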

    
    \begin{proof}
        Let $P_{\bullet}\to\Z$ be the resolution of Theorem \ref{thm:Gres} and put $z\coloneqq1-c$. The construction of \cite[Thms.~5.5 and 6.3]{seanyc} starts from a null-homotopy of multiplication by $z$ on $P_{\bullet}$. For $i=1,2,3$ we define $\varphi_{i}\colon P_{i-1}\to P_{i}$ by
        \begin{align*}
            \varphi_{1} = \begin{bmatrix}
            -\Nf & 0
            \end{bmatrix},\;\;\;
            \varphi_{2}=\begin{bmatrix}
            0 & 0 \\
            1-g & g-1
            \end{bmatrix}, \;\;\; \varphi_{3} = \begin{bmatrix}
            -\Nw \\
            -\Nw
            \end{bmatrix}.
        \end{align*}
        These are the dashed maps in the following diagram, whose vertical maps are multiplication by $z$.
        \[
            \begin{tikzcd}[ampersand replacement=\&, row sep=large, column sep=huge ]
                \ZG \& {\ZG^{2}} \& {\ZG^{2}} \& \ZG \\
                \ZG \& {\ZG^{2}} \& {\ZG^{2}} \& \ZG
                \arrow["{d_{3}}", from=1-1, to=1-2]
                \arrow["{z\cdot I}"', from=1-1, to=2-1]
                \arrow["{d_{2}}", from=1-2, to=1-3]
                \arrow["{\varphi_{3}}"', dashed, red, from=1-2, to=2-1]
                \arrow["{z\cdot I_{2}}"', from=1-2, to=2-2]
                \arrow["{\varphi_{2}}"', dashed, red, from=1-3, to=2-2]
                \arrow["{z\cdot I_{2}}"', from=1-3, to=2-3]
                \arrow["{{z\cdot I}}"', from=1-4, to=2-4]
                \arrow["{d_{1}}", from=1-3, to=1-4]
                \arrow["{\varphi_{1}}"', dashed, red, from=1-4, to=2-3]
                \arrow["{d_{3}}"', from=2-1, to=2-2]
                \arrow["{d_{2}}"', from=2-2, to=2-3]
                \arrow["{d_{1}}"', from=2-3, to=2-4]
            \end{tikzcd}
        \]
        A direct computation using the fundamental formula \eqref{Eq:FunForm} and $\Nf(f-1)=\Ng(g-1)=\Nw(W-1)=c-1$ gives
        \[
            \varphi_{1}d_{1} = z\cdot I, \quad d_{1}\varphi_{1}+\varphi_{2}d_{2} = z\cdot I_{2},  \quad d_{2}\varphi_{2}+\varphi_{3}d_{3} = z\cdot I_{2}, \quad d_{3}\varphi_{3} = z\cdot I.
        \]
        Thus the maps $\{\varphi_{i}\}_{i=1,2,3}$ form the required null-homotopy. Moreover, $\varphi_{1}\varphi_{2}=0$ and $\varphi_{2}\varphi_{3}=0$, so that Hypothesis~5.1 of \cite{seanyc} holds. Then we can apply \cite[Thm.~6.3]{seanyc} which produces the desired resolution with modules $\overline{P}_{0}$ and $\overline{P}_{1}$ in degrees zero and one, and $\overline{P}_{0}\oplus\overline{P}_{2}$ and $\overline{P}_{1}\oplus\overline{P}_{3}$ in all higher even and odd degrees respectively. Here $\overline{(-)}=\ZT\otimes_{\ZG}-$ denotes reduction modulo $z$. The resulting differentials are given in block form by
        \begin{align*}
            \partial_{1}=\od_{1},\;\;\partial_{2}=\begin{bmatrix}
            -\overline{\varphi}_{1} \\
            \od_{2}
            \end{bmatrix},\;\;
            \partial_{3}=\begin{bmatrix}
            \od_{1} & -\overline{\varphi}_{2} \\
            0 & \od_{3}
            \end{bmatrix},\;\;
            \partial_{4} = \begin{bmatrix}
            -\overline{\varphi}_{1} & 0\\
            \od_{2} & -\overline{\varphi}_{3}
            \end{bmatrix},
        \end{align*}
        and $\partial_{i+2}=\partial_{i}$ for $i\ge3$. Evaluating the blocks, we obtain the matrices in the statement.
    \end{proof}

    \begin{remark}\label{rem:comparison}
        For the slope $q=\tfrac12$, that is, for $W=W(1,2,f,g)=fg^{-1}f^{-1}g$, Ellis and Williams construct a free resolution of $\Z$ over $\ZT$ \cite[Thm.~3]{ellistri}. This resolution has ranks $6$, $13$, $13$ and $10$ in degrees zero to three, and it is $2$-periodic in degrees at least four with stable rank $9$. The resolution of Theorem~\ref{thm:Tres} has ranks $1$, $2$, $3$ and $3$ in degrees zero to three, and it is $2$-periodic in degrees at least two with stable rank $3$. Hence the rank is smaller in every degree, and the periodic range starts two degrees earlier.
    \end{remark}

    To compute the homology and cohomology of $T$ in Section~\ref{sec:homology}, we change bases so that the differentials of Theorem~\ref{thm:Tres} become diagonal in degrees at least three.

    \begin{corollary}\label{cor:diagonal}
        Suppose that $k=2$ and $s\ge 2$, let $\delta_{1}$ and $\delta_{2}$ be as in \eqref{eq:dT}, and put
        \[
        \delta_{3}\coloneqq\operatorname{diag}(f-1,\,g-1,\,W-1),\qquad
        \delta_{4}\coloneqq\operatorname{diag}(\Nf,\Ng,\Nw),\qquad
        \delta_{i+2}\coloneqq\delta_{i}\quad(i\ge 3),
        \]
        so that the rows of $\delta_{3}$ are the identities $\zeta_{1},\zeta_{2},\zeta_{3}$ of Lemma~\ref{hyp:AS}. Then
        \begin{equation}\label{eq:direct}
        \begin{tikzcd}
        \cdots \arrow[r, "\delta_{5}"] & \ZT^{3} \arrow[r, "\delta_{4}"] & \ZT^{3} \arrow[r, "\delta_{3}"] & \ZT^{3} \arrow[r, "\delta_{2}"] & \ZT^{2} \arrow[r, "\delta_{1}"] & \ZT \arrow[r, "\varepsilon"] & \Z \arrow[r] & 0
        \end{tikzcd}
        \end{equation}
        is a free resolution of $\Z$ over $\ZT$ which is $2$-periodic in degrees at least two. More precisely, let
        \[
        E\coloneqq\begin{bmatrix}1&0&0\\1&1&-1\\1&1&0\end{bmatrix},\qquad
        U\coloneqq\begin{bmatrix}1&0&0\\0&1&0\\\wf&\wg&1\end{bmatrix}.
        \]
        Then the identity in degrees zero and one, $E$ in every even degree at least two and $U$ in every odd degree at least three form a chain isomorphism from \eqref{eq:direct} to the resolution of Theorem~\ref{thm:Tres}.
    \end{corollary}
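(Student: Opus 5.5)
The plan is to verify directly that the given matrices form a chain isomorphism. Exactness of \eqref{eq:direct} then follows from Theorem~\ref{thm:Tres}, since a complex isomorphic to an exact complex is exact. Both $E$ and $U$ are invertible over $\ZT$. $E$ is an integer matrix with $\det E=1$: expanding along the first row gives $1\cdot(1\cdot 0-(-1)\cdot 1)=1$. $U$ is unitriangular, with inverse obtained by negating the entries $\wf,\wg$ in its bottom row.

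With left-to-right composition, the chain map condition in degree $j$ reads $\delta_{j}\,\alpha_{j-1}=\alpha_{j}\,\partial_{j}$, where $\alpha_{j}$ is the vertical map in degree $j$. I will check the following four equalities:
\[
\delta_{2}=E\,\partial_{2},\qquad \delta_{3}E=U\partial_{3},\qquad \delta_{4}U=E\partial_{4},\qquad \delta_{5}E=U\partial_{5}.
\]
The last one is the same as the second, because $\delta_{5}=\delta_{3}$ and $\partial_{5}=\partial_{3}$. Periodicity then propagates the isomorphism to all degrees. In degrees $0$ and $1$ the two complexes agree, since $\partial_{1}=\delta_{1}$.

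\emph{Degree 2.} The first row of $E\partial_{2}$ is $(\Nf,0)$. The second row is (row 1) $+$ (row 2) $-$ (row 3) of $\partial_{2}$, which is $(0,\Ng)$. The third row is (row 1) $+$ (row 2), which is $(\Nw\wf,\Nw\wg)$. This is exactly $\delta_{2}$.

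\emph{Degree 3.} The matrix $\delta_{3}E$ has rows $(f-1,0,0)$, $(g-1,g-1,1-g)$ and $(W-1,W-1,0)$. For $U\partial_{3}$, the first two rows are those of $\partial_{3}$. The third row is $\wf\cdot(\text{row }1)+\wg\cdot(\text{row }2)+(\text{row }3)$, which equals
\[
\bigl(\wf(f-1)+\wg(g-1),\ \wg(g-1)+\wf(f-1),\ -\wg(g-1)+\wg(g-1)\bigr)=(W-1,W-1,0)
\]
by the fundamental formula \eqref{Eq:FunForm}. So the two sides agree.

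\emph{Degree 4.} The matrix $\delta_{4}U$ has rows $(\Nf,0,0)$, $(0,\Ng,0)$ and $(\Nw\wf,\Nw\wg,\Nw)$. For $E\partial_{4}$, the first row is $(\Nf,0,0)$. The second row is (row 1) $+$ (row 2) $-$ (row 3) of $\partial_{4}$, which is $(0,\Ng,0)$. The third row is (row 1) $+$ (row 2), which is $(\Nw\wf,\Nw\wg,\Nw)$. Again the two sides agree.

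The computation has no real obstacle; the only care needed is the order convention. Matrices act on row vectors from the right, so the chain map squares must be written as above rather than in the transposed order. Combinatorially, the key cancellation is the use of the fundamental formula in degree $3$.
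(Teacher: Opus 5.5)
Your proposal is correct and follows the paper's proof essentially verbatim. Both use the invertibility of $E$ and $U$ and periodicity to reduce the chain-isomorphism claim to the three matrix identities $E\partial_{2}=\delta_{2}$, $U\partial_{3}=\delta_{3}E$ and $E\partial_{4}=\delta_{4}U$, apply the fundamental formula only in the third row of $U\partial_{3}$, and deduce exactness from Theorem~\ref{thm:Tres}; the paper also notes that exactness can be seen directly from Lemma~\ref{hyp:AS} and the induced cyclic resolutions, but this is not needed.
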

    
    \begin{proof}
        The matrix $E$ has determinant one and $U$ is unitriangular, so both are invertible over $\ZT$, with
        \[
            E^{-1}=\begin{bmatrix}
                1 & 0 & 0 \\
                -1 & 0 & 1 \\
                0 & -1 & 1
            \end{bmatrix},\qquad U^{-1}=\begin{bmatrix}
                1 & 0 & 0 \\
                0 & 1 & 0 \\
                -\wf & -\wg & 1
            \end{bmatrix}.
        \]
        Recall that matrices act on row vectors from the right. Since $\delta_{1}=\partial_{1}$ and both complexes repeat with period two from degree three onward, the maps in the statement form a chain map if and only if
        \[
            E\partial_{2}=\delta_{2},\qquad U\partial_{3}=\delta_{3}E,\qquad E\partial_{4}=\delta_{4}U.
        \]
        These identities follow directly by multiplying out the matrices of Theorem~\ref{thm:Tres}. The only entries that need more than cancellation lie in the third row of $U\partial_{3}$, which is
        \[
            \left(\wf(f-1)+\wg(g-1),\ \wf(f-1)+\wg(g-1),\ 0\right)=(W-1,\,W-1,\,0)
        \]
        by the fundamental formula \eqref{Eq:FunForm}, and this is the third row of $\delta_{3}E$. Hence the maps form a chain isomorphism, and \eqref{eq:direct} is a resolution because the complex of Theorem~\ref{thm:Tres} is one. 
        
        Finally, note that exactness of \eqref{eq:direct} can also be checked without Theorem~\ref{thm:Tres}. Indeed, it holds in degrees zero and one by Theorem~\ref{Thm:preres} and in degree two by Lemma~\ref{hyp:AS}. In degrees at least two, \eqref{eq:direct} is the direct sum of the standard $2$-periodic resolutions of $\Z$ over $\Z\langle f\rangle$, $\Z\langle g\rangle$ and $\Z\langle W\rangle$ (see Lemma~\ref{lem:torsion-class}), induced up to $\ZT$ and shifted by two, and these are exact in degrees at least three because $\ZT$ is free over each of these group rings.
    \end{proof}

    \begin{remark}\label{rem:cyclic}
        Since the free resolution \eqref{eq:direct} is induced from the cyclic subgroups $\langle f\rangle$, $\langle g\rangle$ and $\langle W\rangle$ in degrees at least two (see the end of the proof of Corollary~\ref{cor:diagonal}), the Eckmann--Shapiro lemma \cite[Thm. 4, p.488]{eckmannog} and the periodicity of the homology of finite cyclic groups give
        \[
            \Ho_{i}(T,A)\cong\Ho_{i}(\langle f\rangle,A)\oplus\Ho_{i}(\langle g\rangle,A)\oplus\Ho_{i}(\langle W\rangle,A)
        \]
        for every $\ZT$-module $A$ and every $i\ge 3$, and similarly for cohomology. For $A=\Z$ this explains the periodic part of Theorem~\ref{thm:homologyT}.
    \end{remark}

    \section{Integral homology and cohomology}\label{sec:homology}
    Both of the resolutions constructed in Section \ref{sec:freeres} consist of finitely generated free modules. If we apply $\Z\otimes_{\ZG}-$, respectively $\Z\otimes_{\ZT}-$, then every matrix entry is replaced by its augmentation, and the computation reduces to Smith normal forms over $\Z$. Throughout we use that, for an arbitrary group $A$, the universal coefficient theorem gives a non-canonical isomorphism
    \[
      \Ho^{i}(A,\Z)\simeq\Hom\bigl(\Ho_{i}(A,\Z),\Z\bigr)\oplus
      \Ext^{1}_{\Z}\bigl(\Ho_{i-1}(A,\Z),\Z\bigr).
    \]
    
    \subsection{The group \texorpdfstring{$G$}{G}}
    Applying the augmentation map to the resolution \eqref{Eq:GRes}, we obtain the complex of abelian groups
    \begin{equation*}
        \begin{tikzcd}
            \Z \arrow[r,"\varepsilon d_{3}"] & \Z^{2} \arrow[r,"\varepsilon d_{2}"] & \Z^{2} \arrow[r, " \varepsilon d_{1}"] & \Z,
        \end{tikzcd}
    \end{equation*} 
    with maps given by
    \begin{equation*}\label{eq:augG}
        \varepsilon d_{1}=\begin{bmatrix}
            0\\ 0
        \end{bmatrix},
        \qquad
        \varepsilon d_{2}=\begin{bmatrix}
            n\sigma_{f}-l & n\sigma_{g}\\
            n\sigma_{f} & n\sigma_{g}-m
        \end{bmatrix},
        \qquad
        \varepsilon d_{3}=\begin{bmatrix} 0 & 0 \end{bmatrix},
    \end{equation*}
    where we have used $\varepsilon(\wf)=\sigma_{f}$, $\varepsilon(\wg)=\sigma_{g}$, $\varepsilon(\Nf)=l$, $\varepsilon(\Ng)=m$ and $\varepsilon(\Nw)=n$.
    
    \begin{proof}[Proof of Theorem \ref{thm:homologyG}]
        Here $\varepsilon d_{2}$ is the matrix $J$ of Proposition \ref{prop:Gab}. The Smith normal form of $J$ is $\operatorname{diag}(\nu,|\det J|/\nu)$ if $\det J\ne 0$, and $\operatorname{diag}(\nu,0)$ otherwise. Since $\varepsilon d_{1}=0$, we have $\Ho_{1}(G,\Z)=\coker(\varepsilon d_{2})=\Gab$.
        
        Similarly, since $\varepsilon d_{3}=0$ we have $\Ho_{2}(G,\Z)=\ker(\varepsilon d_{2})$. For $\det J\ne0$ this vanishes. For $\det J=0$ the first row of $J$ is non-zero, so $J$ has rank one and the kernel is free of rank one. Moreover, because $\varepsilon d_{3}=0$ and the resolution has length three, we have $\Ho_{3}(G,\Z)=\Z$ and $\Ho_{i}(G,\Z)=0$ for $i\ge 4$. 
        
        The cohomology follows from the universal coefficient theorem. In particular, $\Ext^{1}_\Z(\Gab,\Z) \cong\Gab$ when $\Gab$ is finite, and $\Ext^{1}_\Z(\Z/\nu\oplus\Z,\Z) \cong\Z/\nu$ with $\Hom(\Z/\nu\oplus\Z,\Z)\cong\Z$ when $\det J=0$.
    \end{proof}
    
    \subsection{The group \texorpdfstring{$T$}{T}}
    By Corollary~\ref{cor:diagonal} we may compute with the free resolution \eqref{eq:direct}. Applying the augmentation map to its differentials, we obtain
    \[
        \varepsilon\delta_{1}=0,\qquad \varepsilon\delta_{2}=R_{T},\qquad \varepsilon\delta_{3}=0,\qquad \varepsilon\delta_{4}=\operatorname{diag}(l,m,n),
    \]
    where $R_{T}$ is the matrix of Proposition~\ref{prop:Tab}.
    
    \begin{proof}[Proof of Theorem~\ref{thm:homologyT}]
        Since $\varepsilon\delta_{1}=0$ and $\varepsilon\delta_{3}=0$, we have $\Ho_{1}(T,\Z)=\coker(R_{T})=\Tab$ by Proposition~\ref{prop:Tab}, and $\Ho_{2}(T,\Z)=\ker(R_{T})$. The latter is free of rank one, because the rows of $R_{T}$ span a lattice of rank two. In degrees $i\ge 3$ the matrix $\varepsilon\delta_{i}$ is zero for odd $i$ and equal to $\operatorname{diag}(l,m,n)$ for even $i$. It follows that $\Ho_{i}(T,\Z)\cong\Z/l\oplus\Z/m\oplus\Z/n$ for odd $i\ge 3$ and $\Ho_{i}(T,\Z)=0$ for even $i\ge 4$. The cohomology follows by applying the universal coefficient theorem to this homology. Here $\Tab$ is finite by Proposition~\ref{prop:Tab}, so that $\Hom(\Tab,\Z)=0$ and $\Ext^{1}_{\Z}(\Tab,\Z)\cong\Tab$.
    \end{proof}
    
    \begin{remark}\label{rem:optimal}
        When $\gcd(l,m,n)>1$, any free resolution of $\Z$ over $\ZT$ has rank at least three in each degree $i\ge3$. To see this, let $F_{\bullet}\to\Z$ be a free resolution with $F_{i}\cong\ZT^{t_{i}}$ for $i\ge 0$, and let $p$ be a prime dividing $l$, $m$ and $n$. Then the tensor product $\mathbb{F}_{p}\otimes_{\ZT} F_{\bullet}$ is a complex of $\mathbb{F}_{p}$-vector spaces of dimensions $t_{i}$ for $i\ge 0$. The homology of this complex is $\Ho_{*}(T,\mathbb{F}_{p})$, and hence $t_{i}\ge\dim\Ho_{i}(T,\mathbb{F}_{p})$. By Theorem \ref{thm:homologyT} and the universal coefficient theorem, $\Ho_{i}(T,\mathbb{F}_{p})$ is $(\Z/l\oplus\Z/m\oplus\Z/n)\otimes\mathbb{F}_{p}$ for odd $i\ge3$ and $\operatorname{Tor}^{\Z}_{1}(\Z/l\oplus\Z/m\oplus\Z/n,\mathbb{F}_{p})$ for even $i\ge4$. In particular, both of these vector spaces are three-dimensional because $p$ divides $l$, $m$ and $n$. Since the resolution of Theorem \ref{thm:Tres} has rank three in all degrees $i\ge 2$, it is optimal in this sense when $\gcd(l,m,n)>1$.
        
        When $\gcd(l,m,n)=1$ the same argument gives a lower bound of at most two, and we do not know whether stable rank two can be attained in this case.
    \end{remark}

    \subsection{Two examples}\label{sec:examples}
    The differentials of Theorems \ref{thm:Gres} and \ref{thm:Tres} depend on the Farey word only through its Fox derivatives $\wf$ and $\wg$, which can be read off \eqref{eq:Wrs} directly. We work out one example for each parity of $s$. The first falls under case (a) of Proposition \ref{prop:Gab} and the second under case (c).

    \begin{example}[$q=\tfrac12$, the family of Ellis and Williams]\label{ex:half}
        For $(r,s)=(1,2)$ the Farey word is the commutator $W=fg^{-1}f^{-1}g$, and its Fox derivatives are
        \[
            \wf=1-fg^{-1}f^{-1}, \qquad \wg=fg^{-1}f^{-1}-fg^{-1}.
        \]
        We take $(l,m,n)=(3,3,4)$. Then both vertex links are the hyperbolic turnover $S^{2}(3,3,4)$, with $c_{1}=c_{2}=\tfrac{2}{3}+\tfrac{1}{4}-1=-\tfrac{1}{12}< 0$, so that the hypothesis $k=2$ holds. Since $s$ is even, Proposition \ref{prop:Gab}(a) gives $\det J=lm=9$ and $\bar c=0$. The abelianisations of $G$ and $T$ therefore agree, and both are isomorphic to $\Z/3\oplus\Z/3$. Theorems \ref{thm:homologyG} and \ref{thm:homologyT} now give
        \[
            \Ho_{i}(G,\Z)\cong\begin{cases}
                \Z & i=0,\\
                \Z/3\oplus \Z/3 & i=1,\\
                0 & i=2, \\
                \Z & i=3,\\
                0 & i\ge 4,
            \end{cases}
            \qquad
            \Ho_{i}(T,\Z)\cong\Z/3\oplus\Z/3\oplus\Z/4 \quad(i\ge3\ \text{odd}).
        \]
        Here $\Ho_{2}(G,\Z)$ vanishes because $\det J\ne0$, while $\Ho_{3}(G,\Z)\cong\Z$ comes from the last term of the resolution of Theorem \ref{thm:Gres}, whose differential vanishes after applying the augmentation map. In odd degrees the homology of $T$ is the sum of the homology of its cyclic subgroups $\langle f\rangle$, $\langle g\rangle$ and $\langle W\rangle$, which carry all of its torsion by Lemma \ref{lem:torsion-class}. This is the family treated by Ellis and Williams \cite[\S5]{ellistri}, and Remark \ref{rem:comparison} compares the resolution constructed in \cite{ellistri} with ours.
    \end{example}

    \begin{example}[$q=\tfrac23$, the case $\det J=0$]\label{ex:detzero}
        For $(r,s)=(2,3)$ the Farey word is $W=fg^{-1}fgf^{-1}g$, and its Fox derivatives are
        \[
            \wf=1+fg^{-1}-fg^{-1}fgf^{-1}, \qquad \wg=fg^{-1}f+fg^{-1}fgf^{-1}-fg^{-1}.
        \]
        In this case we take $(l,m,n)=(6,3,2)$. Then $c_{1}=c_{2}=\tfrac16+\tfrac13+\tfrac12-1=0$, and both vertex links are the Euclidean turnover $S^{2}(6,3,2)$, which is one of the three turnovers of Lemma \ref{lem:euclidean-links}. Here $\eta=3$ and $\nu=1$, and
        \[
          \det J = lm-n(l\sigma_{g} + m\sigma_{f}) = 18-2\cdot 9 = 0 .
        \]
        This example therefore falls under case (c) of Proposition \ref{prop:Gab}, which by Lemma \ref{lem:detJk2} can only occur under the present hypotheses.
    
        Since $\nu=1$, the abelianisation $\Gab\cong\Z$ is torsion-free. The class $\bar c$ generates a subgroup of index $\lcm(l,m)=6$, and hence $\Tab=\Gab/\langle\bar c\rangle\cong\Z/6$. The homomorphism $\Psi$ of Lemma \ref{lem:Psi} sends $f$, $g$ and $c$ to $1$, $2$ and $6$ respectively, and it generates $\Ho^{1}(G,\Z)\cong\Z$. Theorem \ref{thm:homologyG} now gives $\Ho_{i}(G,\Z)\cong\Z$ for $0\le i\le3$ and $\Ho_{i}(G,\Z)=0$ for $i\ge4$. In particular $\Ho_{2}(G,\Z)$ is non-zero, and by Theorem \ref{thm:central}(b) the map $\alpha$ identifies it with $\Ho_{2}(T,\Z)\cong\Z$. The situation is reversed in Example \ref{ex:half}, where $\Ho_{2}(G,\Z)=0$ and $\xi$ maps $\Ho_{2}(T,\Z)$ isomorphically onto $\langle c\rangle$. Finally, Theorem \ref{thm:homologyT} gives $\Ho_{i}(T,\Z)\cong\Z/6\oplus\Z/3\oplus\Z/2$ for odd $i\ge3$.
    \end{example}

    \printbibliography

\end{document}